\documentclass{amsart}
\usepackage{amssymb,amsthm, amsmath}
\usepackage{hyperref}

\usepackage{bbold}

\usepackage[T1]{fontenc}
\usepackage{cancel}
\usepackage{graphicx}
\usepackage{enumerate}
\usepackage{thm-restate}

\newtheorem{theorem}{Theorem}[section]
\newtheorem{assumption}[theorem]{Assumption}
\newtheorem{lemma}[theorem]{Lemma}
\newtheorem{corollary}[theorem]{Corollary}
\newtheorem{proposition}[theorem]{Proposition}

\newtheorem{claim}{Claim}[theorem]

\theoremstyle{definition}
\newtheorem{remark}[theorem]{Remark}
\newtheorem{definition}[theorem]{Definition}
\newtheorem{example}[theorem]{Example}

\newtheorem{defobs}[theorem]{Definition/Observation}
\def\version{PRELIMINARY VERSION -- \today}

\DeclareMathOperator{\Jn}{Join}
\DeclareMathOperator{\Cay}{Cay}
\DeclareMathOperator{\Carr}{Carr}
\DeclareMathOperator{\CAT}{CAT}
\newcommand{\Stab}{\operatorname{Stab}}

\newcommand{\bN}{\mathbb{N}}
\newcommand{\mc}[1]{\mathcal{#1}}

\newcommand{\ThetaD}{\Theta_{\mc{D}}}
\newcommand{\diam}{\operatorname{diam}}
\newcommand{\minus}{\smallsetminus}

\newcommand{\bndry}{\partial}
\newcommand{\Ld}[1]{\Lambda(#1)}
\newcommand{\from}{\colon\thinspace}
\newcommand{\acts}{\curvearrowright}

\usepackage{ifthen}

\newcommand{\showcomments}{yes}

\newsavebox{\commentbox}
{\ifthenelse{\equal{\showcomments}{yes}}%
{\footnotemark
    \begin{lrbox}{\commentbox}
    \begin{minipage}[t]{1.25in}\raggedright\sffamily\upshape\tiny
    \footnotemark[\arabic{footnote}]}
{\begin{lrbox}{\commentbox}}}%
{\ifthenelse{\equal{\showcomments}{yes}}%
{\end{minipage}\end{lrbox}\marginpar{\usebox{\commentbox}}}
{\end{lrbox}}}
\usepackage{todonotes}

\newcommand{\set}[2]{\{\,{#1} \mid {#2} \,\}}

\newcommand{\D}{\mc{D}}

\begin{document}

\author[M. Haulmark]{Matthew Haulmark}
\address{University of Texas Rio Grande Valley}
\email{matthew.haulmark01@utrgv.edu}

\title{From cut sets to cube complexes}

\author[J. F. Manning]{Jason Fox Manning}
\address{Cornell University}
\email{jfmanning@cornell.edu}

\begin{abstract}
In this paper we obtain an action on a cube complex from an action on a path-connected topological space with a system of \emph{divisions}. In the settings of hyperbolic groups or relatively hyperbolic groups with no peripheral splittings, our result provides an alternate route to Sageev's construction of a cube complex action from a collection of (relatively) quasiconvex subgroups of a (relatively) hyperbolic group. 
\end{abstract}

\thanks{J.M. was partly supported by the Simons Foundation through Simons Collaboration Grants \#524176 and \#942496.}

\maketitle


\pagestyle{myheadings}\markright{\version}

\section{Introduction}

In this note we obtain an action on a cube complex from an action on a path-connected topological space satisfying certain fairly weak conditions (see \ref{as:main}). Our construction is applicable, in particular, to boundaries of hyperbolic groups and relatively hyperbolic groups which have no peripheral splitting.
In this case we give some additional information and a new point of view on some well-known results; see for example Theorems~\ref{thm: intersection of halfspaces} and~\ref{thm: JSJ Tree} below.

Because our assumptions are so weak, though, we anticipate being able to apply this framework outside the (relatively) hyperbolic world. 
Our construction, like the \emph{minimal cubing} construction of \cite{NSSS}, can be expected to give better control on the dimension and combinatorics of the dual cube complex as compared with the original formulation by Sageev~\cite{Sageev97}.
In order to state the first main result (Theorem~\ref{thm:sageev}) we must first give some definitions.

\begin{definition}
\label{def: division}
  Let $M$ be a topological space.  A \emph{division} of $M$ is a pair $D = (C_D,\{M_D^+,M_D^-\})$ so that
  \begin{enumerate}
  \item $C_D\subset M$ is closed and nowhere dense.
  \item $\{M_D^+, M_D^-\}$ is a decomposition of $M\minus C_D$ into nonempty clopen sets.
  \item For each $\epsilon\in\{+,-\}$, $\overline{M_D^{\epsilon}} = M_D^{\epsilon}\cup C_D$.
  \end{enumerate}
  Given a $G$--action on $M$, we say that $D$ is \emph{$G$--full} if its stabilizer is finite index in the $G$--stabilizer of $C_D$.
\end{definition}
If there exists a division $D = (C_D,\{M_D^+,M_D^-\})$, we say $C_D$ is a \emph{cut set} in $M$.

\begin{definition}
  A collection $\mc{C}$ of subsets of a space $M$ is \emph{point-convergent} if for every countable sequence $\{C_i\}$ of distinct elements of $\mc{C}$,
  \[ \#\left(\lim_{i\to\infty}C_i \right) \le 1 .\]
  (The limit of the collection is the set of points in $M$ which are limits of sequences $\{x_i\in C_i\}_{i\in\bN}$.)

  A collection of divisions $\mc{D}$ is \emph{point-convergent} if $\{D\mid C_D = C\}$ is finite for any $C\subset M$ and the collection $\{C_D\mid D\in \mc{D}\}$ is point-convergent.
\end{definition}
\begin{remark}\label{rem: ptconv iff null}

  If $M$ is a compact metric space, $\mc{C}$ is point-convergent if and only if it is \emph{null} in the following sense.
\end{remark}
\begin{definition}\label{def: null family} A collection of subsets $\mathcal{S}$ of a metric space is called a \emph{null family} if for every $\epsilon>0$ there are only finitely many $S\in\mathcal{S}$ with $\diam(S)>\epsilon$.\end{definition}

\begin{definition}
 Let $Y$ be a path connected topological space.  A point $p\in Y$ is a \emph{path cut point} if there exist $a,b\in Y\minus\{p\}$ so that every path from $a$ to $b$ passes through $p$.
\end{definition}
\begin{remark}\label{rem:pathcut}
  If $Y$ is locally path connected and metrizable, then every path cut point is actually a cut point; see Lemma \ref{lem:pathcut}.  For an example of a path cut point which is not a cut point, consider a point in the middle of the vertical line segment of the topologist's sine curve.
\end{remark}

The following seem to be the minimal assumptions for our approach.
\begin{assumption}\label{as:main}
  Let $M$ be a non-degenerate, path connected, $T_1$, Baire space with no path cut point, and suppose that a countable group $G$ acts by homeomorphisms on $M$.  Let $\mc{D}$ be a $G$--finite, point-convergent collection of $G$--full divisions of $M$.
\end{assumption} 

\begin{restatable}{thmA}{mainconstruction}\label{thm:sageev}
  Under Assumption \ref{as:main}, there is a $G$--action on a $\CAT(0)$ cube complex $X=X(\mc{D})$ so that the set of hyperplane stabilizers is equal to the set of division stabilizers.
 
\end{restatable}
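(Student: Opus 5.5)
The plan is to build a \emph{pocset} (poset with complementation) out of the half-spaces of the divisions and apply Sageev's construction, in the form due to Roller and Niblo--Reeves. The pocset is
\[
\mc{H}=\set{M_D^\epsilon}{D\in\mc{D},\ \epsilon\in\{+,-\}},
\]
with complementation $M_D^+\leftrightarrow M_D^-$. The partial order will be inclusion of closures, or equivalently inclusion of half-spaces. Since $G$ permutes $\mc{D}$, it acts on $\mc{H}$ preserving order and complementation, and it acts on the cube complex $X(\mc{D})$ dual to $\mc{H}$. Hyperplanes of $X$ correspond bijectively to pairs $\{M_D^+,M_D^-\}$, hence to divisions $D$. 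So the stabilizer of a hyperplane equals the setwise stabilizer of the pair, which is the stabilizer of $D$. There are two things to check. First, $\mc{H}$ really is a pocset, with $h\ne h^*$ and comparability behaving well. Second, the cube complex is finite dimensional or at least locally finite enough, which requires the finite interval condition: for any $h\subset k$ only finitely many $\ell$ satisfy $h\subset\ell\subset k$.

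For the pocset axioms, I will first show that distinct divisions give distinct half-spaces. I will also show that any two divisions $D,E$ are either \emph{transverse}, meaning all four intersections $M_D^\pm\cap M_E^\pm$ are nonempty, or \emph{nested}, meaning some half-space of one contains a half-space of the other. Nestedness is immediate once some intersection is empty, for example $M_D^+\cap M_E^+=\emptyset$ gives $M_D^+\subset M_E^-\cup C_E$. To get strict inclusion into $M_E^-$ I will use that $C_E$ is nowhere dense and that $M_D^+$ is open with closure $M_D^+\cup C_D$. Nondegeneracy and nonemptiness of half-spaces rule out $h=h^*$. The finitely-many-divisions-per-cut-set hypothesis handles divisions sharing a cut set.

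The key step is the finite interval condition, and I expect it to be the main obstacle. Suppose infinitely many distinct $\ell_i$ satisfy $h\subsetneq \ell_i\subsetneq k$. By point-convergence, after passing to a subsequence the cut sets $C_i$ have limit set with at most one point $p$. Pick $a\in h$ and $b\in k^*$, both away from $p$; this is possible since half-spaces are open, nonempty, and $M$ is $T_1$. Each $C_i$ separates $a$ from $b$, so every path from $a$ to $b$ meets every $C_i$. Since $p$ is not a path cut point, there is a path $\gamma$ from $a$ to $b$ avoiding $p$. I then want a contradiction by extracting points $x_i\in C_i\cap\gamma$ accumulating at a point of $\gamma$ different from $p$. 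This uses compactness of $\gamma([0,1])$: a subnet of the $x_i$ converges in the compact image, and passing from nets to sequences may need care, since $M$ is only $T_1$. The alternative is to use the Baire property to push $C_i$ and the limit apart. The limit point lies on $\gamma$, hence is not $p$. That gives a second limit point, or a first one when $p$ does not exist, and I still need two distinct limit points for the contradiction. To get them I will choose two paths, or a path together with the fact that the $C_i$ also separate points near $a$, yielding two disjoint compact arcs each meeting every $C_i$. Another route is to take two disjoint paths, which exist using the absence of path cut points and nondegeneracy. This produces two distinct limit points, contradicting point-convergence.

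Finally, $G$--finiteness of $\mc{D}$ together with $G$--fullness will be used only for cocompactness-type statements. For the theorem as stated, Sageev's construction (Roller's version, which needs only the finite interval condition to give a connected $\CAT(0)$ cube complex from a base ultrafilter) suffices. I would choose the base vertex as the principal ultrafilter at a point $x\in M\minus\bigcup_D C_D$, which is nonempty by the Baire property since $\mc{D}$ is countable and each $C_D$ is closed and nowhere dense. I would verify that this ultrafilter satisfies the descending chain condition, again via the finite interval argument. The $G$--orbit of this vertex then lies in a single component, and the $G$--action on that component gives $X(\mc{D})$ with the claimed hyperplane stabilizers.
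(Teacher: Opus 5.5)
There is a genuine gap in the choice of base vertex. You take the principal ultrafilter $\alpha_x=\{\,M_D^\epsilon\mid x\in M_D^\epsilon\,\}$ of a single off-the-wall point and say its DCC follows ``via the finite interval argument.'' The finite interval condition only controls chains trapped between two fixed halfspaces. A descending chain in $\alpha_x$ has no lower bound, and nothing prevents the cut sets from shrinking onto $x$ itself. In your path argument this is the case $p=x$, which the absence of path cut points does not rule out.

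In fact $\alpha_x$ is typically not DCC. Take $G=\pi_1\Sigma$ for a closed hyperbolic surface, $M=S^1=\partial\mathbb{H}^2$, and $\mc{D}$ the divisions of $S^1$ by endpoint pairs of lifts of a simple closed geodesic; Assumption~\ref{as:main} holds. Suppose the axis of a loxodromic $g$ crosses one of these lifts transversally, and let $M_D^+$ be the arc on the side of $g^{+}$. Then $x=g^{+}$ is off-the-wall, and $M_D^+\supsetneq gM_D^+\supsetneq g^2M_D^+\supsetneq\cdots$ is an infinite descending chain in $\alpha_x$. The same happens for every $x$ outside a meager set, since the ray to $x$ crosses infinitely many pairwise disjoint, hence nested, lifts. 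So principal ultrafilters of points of $M$ behave like ideal points, not vertices. For the same reason $\alpha_x$ and $g\alpha_x=\alpha_{gx}$ usually differ on infinitely many walls, so your claim that the $G$--orbit of the base vertex lies in one component also fails.

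The missing idea is the paper's: poll unordered triples of off-the-wall points and let the majority decide. With inclusion as the order, your pocset is isomorphic to the pocset of halfspaces $W_D^\pm$ of $\ThetaD$. Indeed, if $M_D^\epsilon\not\subseteq M_E^\delta$ then $M_D^\epsilon\cap M_E^{-\delta}$ is a nonempty open set, and two off-the-wall points in it plus any third give a triple in $W_D^\epsilon\setminus W_E^\delta$. For a triple $t$ the majority ultrafilter $\alpha_t$ is consistent.

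Suppose infinitely many walls separated two triples. Then infinitely many distinct cut sets would each separate a fixed pair of points from another fixed pair. Point-convergence produces one point $p$ lying on every path between the pairs. Since $p$ equals at most one of the four points, it is a path cut point. This is Lemma~\ref{lem:wallspace}, and the finiteness condition it gives does three things:
\begin{enumerate}
\item it yields DCC for each $\alpha_t$;
\item it puts all the $\alpha_t$, in particular a whole $G$--orbit, in one component;
\item using triples inside $M_D^+$ and inside $M_D^-$ (Lemma~\ref{lem: Baire}), it shows every wall is dual to an edge.
\end{enumerate}
The last point is needed for your assertion that hyperplanes correspond bijectively to divisions, which you do not address.

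Separately, your finite interval argument does not need two disjoint paths, which the hypotheses do not supply. Since $h$ and $k^*$ are open neighbourhoods of $a$ and $b$ missing every $C_i$, no limit point equals $a$ or $b$. Get $p$ from points of $C_i$ on a first path $\sigma$, pulling back to $[0,1]$ for sequential compactness. Then choose $\tau$ avoiding $p$ and pass to a further subsequence; $p$ is still a limit point along it, so you obtain two. Finally, finite intervals do not give finite dimensionality, and none is needed.
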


Even though we do not assume $M$ is the boundary of a hyperbolic space, it turns out to be natural to work with the space of triples.  In Section~\ref{sec: cubes} we explain how a collection of divisions gives a collection of decompositions of a subset of the triple space. We then use these decomposition to put a wallspace structure on this subset.  The complex in Theorem~\ref{thm:sageev} is cube complex dual to the wallspace. 

This theorem applies in particular when $G$ has some relatively hyperbolic structure with connected Bowditch boundary and there is some collection of relatively quasiconvex subgroups whose limit sets divide that boundary.  In fact this is  the \emph{only} way Assumption \ref{as:main} can apply when $M$ is a Bowditch boundary and the sets $C_D$ have no isolated parabolic point, as demonstrated by Proposition \ref{prop:qc iff null}.
(Finite cut-sets consisting of isolated parabolic points can also be of interest;  See for example \cite{kushlam,HPW23}.)

In Section~\ref{sec: Limit Sets} we study the limit sets of vertex stabilizers when $M$ is compact, metric, and $G$ acts on $M$ as a convergence group.  
When $G$ is a hyperbolic group and $M = \partial G$ is its boundary at infinity, we obtain a precise description.  Given $D\in\mc{D}$, let $h_v(D)$ be either $M_D^+$ or $M_D^-$ determined by halfspace chosen by $v(D)$.

\begin{restatable}{thmA}{intersectionofhalfspaces}    
\label{thm: intersection of halfspaces}
    Suppose $G$ is hyperbolic, $M = \partial G$, and $v$ is a vertex of $X(\mc{D})$.   
    Then 
    \[ \Ld{G_v} = \bigcap_{D\in\mc{D}} h_v(D)\cup C_D.\]
\end{restatable}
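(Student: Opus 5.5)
The plan is to prove the two inclusions separately, writing $K_v := \bigcap_{D\in\mc{D}} \bigl(h_v(D)\cup C_D\bigr)$. First I record the basic properties of $K_v$. Each $h_v(D)\cup C_D = \overline{h_v(D)}$ is closed by Definition~\ref{def: division}(3), so $K_v$ is closed. For $g\in G_v$, $g$ permutes $\mc{D}$, and since $g$ fixes $v$ we have $h_v(gD)=g\,h_v(D)$; hence $K_v$ is $G_v$--invariant. I also use the description of vertices from Section~\ref{sec: cubes}. A vertex is a consistent choice of halfspace for every wall which differs on only finitely many walls from the ``principal'' choice determined by a triple $t$ of distinct points of $M$. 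The principal choice at a wall $D$ is the side containing at least two of the three points of $t$, and it is well defined for all but finitely many $D$ by point-convergence. Finally, since $G$ is hyperbolic, Proposition~\ref{prop:qc iff null} tells us that each division stabilizer is quasiconvex and $\{C_D\}$ is a null family (Remark~\ref{rem: ptconv iff null}).

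For $\Ld{G_v}\subseteq K_v$ I would argue pointwise. Fix $x\in\Ld{G_v}$ and $D\in\mc{D}$, and suppose for contradiction that $x$ lies in the open set $U$, the halfspace opposite $h_v(D)$. Choose $g_i\in G_v$ with $g_i\to x$ in the convergence sense, with repelling point $y$; then $g_i z\to x$ uniformly on compacta avoiding $y$. Fix a triple $t$ whose principal choice agrees with $v$ off a finite set $F$ of walls. Pick two points $a,b$ of $t$ different from $y$; for large $i$ both $g_i a$ and $g_i b$ lie in $U$. So for the wall $g_i^{-1}D$, two points of $t$ lie on the side opposite $h_v(g_i^{-1}D)=g_i^{-1}h_v(D)$, and hence $g_i^{-1}D\in F$. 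Since $F$ is finite, after passing to a subsequence $g_i^{-1}D=D'$ is constant, so $g_i g_j^{-1}$ lies in $\Stab(D)$. Then $x$ lies in the limit set of the quasiconvex subgroup $\Stab(D)$, which is contained in $C_D$, contradicting $x\in U$. The degenerate case where $G_v$ is finite has $\Ld{G_v}=\emptyset$ and nothing to prove.

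For $K_v\subseteq\Ld{G_v}$ I would use Bowditch's characterization: if a group acts properly and cocompactly on the space of distinct triples of a closed set $K$ (with $|K|\ge 2$), then $K$ is its limit set. Since the action of $G$ on triples of $\partial G$ is proper, the task is cocompactness of the $G_v$--action on triples $\Theta(K_v)$. Given a triple $t\in\Theta(K_v)$, the principal vertex $v_t$ differs from $v$ on finitely many walls. The key claim is a uniform bound: the number $d(v,v_t)$ of walls on which they differ is bounded independently of $t\in\Theta(K_v)$. The reason is that a wall separating $v$ from $v_t$ must have two points of $t$ strictly on the side not chosen by $v$. This contradicts $t\subseteq K_v$ unless those points lie in $C_D$, which by nullness and the finiteness of $\{D: C_D=C\}$ can happen only boundedly often near any triple. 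Granting the bound, the $G$--equivariant coarse map $\Theta(\partial G)\to X^{(0)}$, $t\mapsto v_t$, sends $\Theta(K_v)$ into a bounded ball around $v$. Because the triple-space action of $G$ is cocompact, triples with $v_t$ in a fixed ball about $v$ form finitely many $G_v$--orbits of compact pieces. This gives cocompactness.

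I expect the uniform bound in the last paragraph to be the main obstacle, specifically handling triples with points lying on cut sets $C_D$ and near each other, where principal choices degenerate. If the direct counting fails, I would instead replace the bound by a compactness argument. Take a sequence of triples $t_i$ in $\Theta(K_v)$ with $d(v,v_{t_i})\to\infty$, translate by elements of $G$ into a fixed compact set of triples, and extract a limit. The walls separating $v$ from $v_{t_i}$ would then produce infinitely many cut sets accumulating on at least two distinct points, which contradicts point-convergence.
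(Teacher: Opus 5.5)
Your proof of $\Ld{G_v}\subseteq K_v$ is correct, and it differs from the paper's Lemma~\ref{lem:containrevised}, which uses density of loxodromic fixed points and the DCC. Taking $t\in\ThetaD$, finiteness of the set $F$ forces $g_ig_1^{-1}\in\Stab(D)$ along a subsequence, so $x\in\Ld{\Stab(D)}\subseteq C_D$. The reverse inclusion, however, has genuine gaps.

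The first gap is the step from ``$\Theta(K_v)$ maps into a bounded ball about $v$'' to ``finitely many $G_v$--orbits of compact pieces''. This does not follow, because $X(\mc{D})$ is in general not locally finite. Take a surface group with $\mc{D}$ coming from lifts of a separating curve $c$ cutting $S$ into $S_1,S_2$. Then $X(\mc{D})$ is the Bass--Serre tree, and for $G_v=\pi_1S_1$ the vertices at distance two from $v$ fall into infinitely many $G_v$--orbits, indexed by the nontrivial double cosets $\langle c\rangle\backslash\pi_1S_2/\langle c\rangle$. Their principal triples cannot be covered by finitely many $G_v$--translates of a compact set. So the uniform bound you flag as the main obstacle would not suffice even if proved.

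What does suffice is sharper, and follows from your own observation. For $t\in\Theta(K_v)$, at every wall where $t$ has a majority side, that side is $h_v(D)$, so $v$ \emph{itself} is compatible with $t$. You would then still need to show that, for a compact fundamental domain $Q$ of $G\acts\Theta(\partial G)$, only finitely many walls either have $C_D$ containing two points of a triple in $Q$, or have triples of $Q$ with majorities on opposite sides. This uses nullness, absence of cut points, and local path connectedness. With that, $g^{-1}v$ lies in a fixed finite set whenever $gQ\cap\Theta(K_v)\neq\emptyset$, which gives cocompactness.

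The second gap is that the triple-space characterization gives $K\subseteq\Ld{H}$ only when $K$ is perfect, in particular $\#K\ge 3$ (not $\#K\ge 2$). If $\#K_v\le 2$ there are no triples at all. If $K_v$ is finite with at least three points, a finite $G_v$ acts properly and cocompactly on $\Theta(K_v)$ although $\Ld{G_v}=\emptyset$. Nothing in your proposal rules out, say, $K_v=\{p\}$, or a finite $K_v$ with $G_v$ finite, so these cases need additional input.

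The paper avoids both problems by arguing pointwise. For $p\in K_v$ it takes a geodesic ray in the graph $\Gamma$ of Theorem~\ref{thm: HGAI result} limiting to $p$. Lemma~\ref{lem:approximatesigma}, which needs cocompactness of $G\acts X$ (Proposition~\ref{prop: hyp cocompact}), shows $p\in C_D$ for every hyperplane eventually separating the projected ray from $v$. So the ray can be rerouted into the connected preimages $\Psi^{-1}(I_k)$ of successive hyperplane intersections. Finite dimensionality stops this with a quasi-geodesic ray in $\Psi^{-1}(N(v))$, which is quasi-isometric to $G_v$, so $p\in\Ld{G_v}$.
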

Theorem~\ref{thm: intersection of halfspaces} suggests the following strategy to show a hyperbolic group is cubulated:  First find some collection of divisions so that the intersections $\bigcap_{D\in\mc{D}} h_v(D)\cup C_D$ can be controlled in such a way to ensure that any hyperbolic group with boundary equal to such an intersection is guaranteed to be cubulated (for example if the intersection is always totally disconnected, or $S^1$).  Second, apply~\cite[Theorem D]{GM23} to conclude the ambient group $G$ is cubulated.  
Since the inclusion $\Ld{G_v} \subseteq \bigcap_{D\in\mc{D}} h_v(D)\cup C_D$ is true in much greater generality  (Lemma~\ref{lem:containrevised}) the strategy might generalize to non-hyperbolic convergence groups.  We leave the application of this strategy to future work.

In general, the cube complex $X(\mc{D})$ depends on the system of divisions, and not just the collection of cut sets.  However when all the cut sets have finite \emph{valence}, there is a natural choice of divisions.  In Section~\ref{sec: canonical division} we define a canonical collection of divisions $\mc{D}(\mc{C})$ given a collection $\mc{C}$ of cut sets such that the number of components of $M\setminus C$ is finite for every $C\in\mc{C}$. When the elements of $\mc{C}$ do not mutually separate (see Definition~\ref{def:mutually separate}), $X\big(\mc{D}(\mc{C})\big)$ is a tree. When furthermore $(G,\mc{P})$ is relatively hyperbolic and $M$ is the Bowditch boundary, we subdivide certain edges of $X\big(\mc{D}(\mc{C})\big)$ to obtain a new tree $\mc{T}(\mc{C})$. After obtaining a new peripheral structure $(G,\widehat{\mc{P}})$ from $(G,\mc{P})$ by including representatives of conjugacy classes of stabilizers of cut sets in $\mc{C}$, we establish the following.  

\begin{restatable}{thmA}{Bowditchtree}
\label{thm: JSJ Tree}
 The tree $\mc{T}(\mc{C})$ is equivariantly isomorphic to the cut point tree of the Bowditch boundary $\partial(G,\widehat{\mathcal{P}})$.   
\end{restatable}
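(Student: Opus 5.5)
The plan is to compare the two trees through their vertex sets and incidence relations; Bowditch's results on $\partial(G,\widehat{\mc{P}})$ do the rest. The cut point tree of a connected, locally connected Bowditch boundary $Z=\partial(G,\widehat{\mc{P}})$ is bipartite. One type of vertex is a cut point of $Z$; the other is a maximal subset $B\subset Z$ not separated by any cut point (a block), and a cut point is joined to each block containing it. We need three inputs.

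\begin{enumerate}
\item $\partial(G,\widehat{\mc{P}})$ is obtained from $M=\partial(G,\mc{P})$ by collapsing each cut set $C\in\mc{C}$ to a point; this is the standard ``peripheral enlargement'' quotient, e.g.\ as in Tran or Groff. The quotient map $\pi\from M\to Z$ is $G$--equivariant.
\item The cut points of $Z$ are exactly the points $\pi(C)$ for $C\in\mc{C}$. These are cut points because $C$ separates $M$. Conversely, $M$ has no path cut point, so the preimage of any other point does not separate.
\item The blocks of $Z$ are the images of the ``complementary regions'' of $\mc{C}$. These are maximal subsets of $M$ that are not separated by any single $C\in\mc{C}$. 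Such a region is exactly an intersection $\bigcap_{D} h_v(D)\cup C_D$ for a vertex $v$ of $X(\mc{D}(\mc{C}))$, which lets us use Lemma~\ref{lem:containrevised} and the description of vertices.
\end{enumerate}

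Next, match the vertices of $\mc{T}(\mc{C})$ with those of the cut point tree. Since the sets in $\mc{C}$ do not mutually separate, $X(\mc{D}(\mc{C}))$ is a tree. Its edges correspond to the canonical divisions of the cut sets, and a cut set $C$ with $k$ complementary components contributes $k$ divisions (or one if $k=2$). I expect the divisions of a fixed $C$ to span a star. I expect the subdivision defining $\mc{T}(\mc{C})$ to do two things: insert a vertex $w_C$ at the center of each such star (or at the midpoint of the single edge when $k=2$), and leave every other vertex as a ``block'' vertex $v$. Then the map is $w_C\mapsto \pi(C)$ and $v\mapsto \pi\big(\bigcap_{D} h_v(D)\cup C_D\big)$. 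Both assignments are $G$--equivariant by construction.

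Three things remain: well-definedness, bijectivity, and adjacency. For well-definedness, the image of a vertex region is a block. It is not separated by a cut point $\pi(C)$, because it lies in a single halfspace closure for each canonical division of $C$. It is maximal, because any point outside it lies in the opposite halfspace of some $D$, and then $C_D$ separates it from the region. For injectivity, distinct vertices are separated by some hyperplane $D$, so their regions lie on opposite sides of $C_D$. Surjectivity on blocks goes as follows. A block $B$ determines a consistent orientation of every division, namely the halfspace containing $B\setminus C_D$. Point-convergence gives the DCC-type finiteness condition, so this orientation is a vertex of the dual complex. Adjacency comes down to $\pi(C)\in\pi(\text{region of }v)$ holding exactly when $v$ is an endpoint of an edge dual to a division of $C$.

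The main obstacle, I expect, is that the region attached to a vertex must be large enough to be a genuine block and not a degenerate set. In particular, two adjacent block vertices in $X$ must not give the same block, and no vertex region may be just a subset of some $C$. I would handle this with the Baire and nowhere-density hypotheses, together with point-convergence. A region with empty interior would be a countable union of limits of cut sets, hence contained in a single point plus finitely many $C$'s. That contradicts the connectedness of $M$ and the absence of (path) cut points, so these degenerate vertices are exactly the ones removed or absorbed by the subdivision.
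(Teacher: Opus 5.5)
Your outline breaks at the step you defer to the end, which is actually the heart of the theorem. You need every vertex $v$ of $X(\mc{D}(\mc{C}))$ to be either principal for a $\sim$--class of off-the-wall points, or the semi-principal vertex of a single $C\in\mc{C}$ of valence at least $3$ (Lemma~\ref{lem: three types}). A priori a vertex is just a DCC ultrafilter in the principal component. Your well-definedness, maximality and injectivity arguments all assume that the region $R_v=\bigcap_D h_v(D)\cup C_D$ contains an off-the-wall point. If instead $R_v\subseteq C_D$, then $\pi(R_v)$ is the single cut point $\pi(C_D)$, not a block. Also, two vertices separated by $D$ only have regions in the two closed halfspaces, which overlap in $C_D$.

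Your proposed remedy does not work. A region with empty interior need not lie in ``a single point plus finitely many $C$'s'', because genuine block regions usually have empty interior. In the paper's example of three surfaces glued along a curve, the region of a surface vertex is the Cantor set $\Ld{\pi_1\Sigma}$. Nor does anything you wrote show that a region with no off-the-wall point lies in a \emph{single} $C$: a closed set covered by countably many disjoint cut sets need not lie in one of them.

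The paper gets this dichotomy from the group, not from the topology of $M$. First, $G_v$ is relatively quasiconvex by \cite[Proposition 5.2]{HH23}. It is not parabolic, since otherwise there would be a peripheral splitting and hence a cut point. If $\Ld{G_v}$ has no off-the-wall point, Proposition~\ref{prop:FiniteInt} covers it by countably many sets $L_e$. Theorem~\ref{thm:yang12} and conical-limit-point dynamics show each $L_e$ is either all of $\Ld{G_v}$ or nowhere dense in it. Baire then puts $\Ld{G_v}$ inside one $C$. Finally, Lemma~\ref{lem:containrevised} transfers this information to $R_v$.

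Your picture of $\mc{T}(\mc{C})$ is also off. The subdivision only inserts midpoints of edges dual to valence-two cut sets. For $C$ of valence $k\ge 3$, the common vertex of the star of its $k$ divisions is already a vertex of $X(\mc{D}(\mc{C}))$: the semi-principal vertex, which chooses every big halfspace. Its region is exactly $C$. So the degenerate vertices are not ``removed or absorbed by the subdivision''. They are precisely the vertices that must go to the cut points $\pi(C)$. You must show this vertex actually lies in the principal component, and that every vertex whose region contains no off-the-wall point is of this form.

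Once that trichotomy is in hand, the rest of your outline essentially matches the paper. A block contains two equivalent off-the-wall points, so its orientation is a principal ultrafilter; your DCC argument is unnecessary, and DCC alone would not place it in the principal component. For adjacency, it suffices that edges map to edges, since a graph morphism between trees that is bijective on vertices is an isomorphism.
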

In the particular case that $\mc{C}$ is the collection of inseparable cut pairs in $\partial(G,\mc{P})$, we show in Corollary~\ref{cor:homeo to jsj tree} that the tree $\mc{T}(\mc{C})$ is equivariantly isomorphic to a certain canonical JSJ tree of cylinders defined by Guirardel and Levitt~\cite{GL11}.

\subsection{Acknowledgments}
We thank Ian Agol for pointing out that the Sageev construction should be describable in this way for hyperbolic groups.  We also thank Annette Karrer for explaining the useful idea of \emph{point-convergence}.

\section{Preliminaries}

\subsection{The Cube Complex Dual to a Space with Walls}\label{app:wallstocubes}

In this section we briefly recall the notion of a \emph{space with walls} and its dual cube complex.  We refer the reader to \cite{SageevPCMI, HW14} for more details and for slightly more general situations.

\begin{definition}
    Let $Y$ be a set and let $\mc{H}$ be a subset of $2^Y\setminus\{\emptyset\}$ which is closed under the operation
    \[A \mapsto A^* = Y\smallsetminus A.\]
    If the set $\mathcal{H}$ satisfies the finiteness condition
    \begin{equation}\label{eq:finiteness} \tag{F} \#\set{A\in \mc{H}}{x\in A,y\in A^*}<\infty,\quad\forall x,y\in Y,
    \end{equation}
    Then $(Y,\mc{H})$ is said to be a \emph{space with walls}.  The pairs $\{A,A^*\}$ are the \emph{walls}, and the elements of $\mc{H}$ are called \emph{halfspaces}.
\end{definition}

\begin{definition}
    Let $(Y,\mc{H})$ be a space with walls. Two walls $\{A, A^*\}$ and $\{B,B^*\}$ are \emph{transverse} if $A\cap B$, $A^*\cap B$, $A\cap B^*$, and $A^*\cap B^*$ are all nonempty.
\end{definition}

We now describe how to construct a cube complex $X$ from a space with walls.  For this construction we must first define ultrafilters. 

\begin{definition}\label{def:ultrafilter}
    An \emph{ultrafilter} on a space with walls $(Y,\mc{H})$
    is a subset $\alpha$ of $\mc{H}$ satisfying:
        \begin{enumerate}
            \item\label{itm:complete} For every $A\in\mc{H}$ exactly one of $\{A,A^*\}$ is in $\alpha$.
            \item If $A\in\alpha$ and $A\subseteq B$, then $B\in\alpha$.
        \end{enumerate}
\end{definition}

\begin{definition}[Principal ultrafilters]\label{def:principal1}
 Let $(Y,\mc{H})$ be a space with walls and let $y\in{Y}$. Then $\alpha_{y}=\set{A\in\mc{H}}{y\in A}$ is an ultrafilter.   
\end{definition}
\begin{remark}
    Let $(Y,\mc{H})$ be a space with walls.  Denote the set of walls $\{A,A^*\}$ by $\mc{W}$.  Because of condition~\eqref{itm:complete} in Definition~\ref{def:ultrafilter} an ultrafilter determines (and is determined by) a function $f_\alpha$.  This function
    \[ f_\alpha\from \mc{W}\to \mc{H}\]
    is determined by $ W\cap \alpha = \{f_\alpha(W)\}$.  We frequently abuse notation by conflating $\alpha$ with $f_\alpha$.
\end{remark}
    
\begin{definition}
\label{def: dcc}
 An ultrafilter satisfies the \emph{Descending Chain Condition (DCC)} if every descending chain of elements terminates.    
\end{definition}
Note that the finiteness condition~\eqref{eq:finiteness} implies that principal ultrafilters are DCC.

If $\alpha$ is a DCC ultrafilter on a space with walls $(Y,\mc{H})$ and $A\in\alpha$ we set 
\[(\alpha;A)=(\alpha\setminus\{A\})\cup \{A^*\}\]

\begin{definition}
    Let $(Y,\mc{H})$ be a space with walls.  We define a cube complex $X=X(Y,\mc{H})$ by first defining a cube complex $Z$ and then taking a certain connected component:
    \begin{itemize}
        \item Let $Z^{(0)}$ be the set of DCC ultrafilters on $\mc{H}$.
        \item Two ultrafilters $\alpha$ and $\alpha'$ are connected by an edge if and only if $\alpha'=(\alpha; A)$ 
        for $A$ minimal in $\alpha$ with respect to inclusion.
        \item For $n>1$, $n$-cubes are attached whenever the $(n-1)$-skeleton of an $n$-cube appears.
    \end{itemize}
    The finiteness condition~\eqref{eq:finiteness} implies that all principal ultrafilters lie in the same connected component of $Z$ \cite[Section 3]{HW14}.
    We let $X$ be the connected component containing the principal ultrafilters.
\end{definition}
This cube complex $X$ is $\CAT(0)$; see for instance,  \cite[Theorem 3.7]{HW14}.  
Let $(Y,\mc{H})$ be a space with walls, and suppose $G$ acts on $Y$ preserving the set of half-spaces $\mc{H}$.  If $X$ is the dual cube complex, there is an induced action of $G$ on $X^{(0)}$ given by:

\[g\alpha = \set{gA}{A\in\alpha}\]
From the functional point of view, we have, for each wall $W$,
\[ f_{g\alpha}(W) = g f_\alpha(g^{-1}W).\]
This action preserves adjacency of vertices, so we obtain an action on the cube complex $X$.

We also point out the following consequence of the construction of $X$ (cf. \cite[Proposition 3.14]{HW14}):

\begin{proposition}\label{prop: dimension cube complex}
The dimension of the cube complex $X$ is equal to the maximum number of pairwise transverse walls.  
\end{proposition}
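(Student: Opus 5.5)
The plan is to prove the statement in two inequalities, comparing maximal cubes in $X$ with maximal families of pairwise transverse walls. Throughout I use the standard dictionary of the dual construction. Each edge of $X$ joins two DCC ultrafilters $\alpha$ and $(\alpha;A)$ that differ on exactly one wall $\{A,A^*\}$; we say the edge is \emph{labelled} by that wall. An $n$-cube is spanned at a vertex $\alpha$ by $n$ distinct walls.

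First I show that an $n$-cube gives $n$ pairwise transverse walls. Let the cube have vertex $\alpha$, and let its edges at $\alpha$ be labelled by walls $W_i=\{A_i,A_i^*\}$ with $A_i\in\alpha$ minimal. The $2^n$ vertices of the cube are the ultrafilters obtained from $\alpha$ by flipping any subset of the $A_i$, and each of these is consistent. Take two indices $i\ne j$ and suppose one of the four intersections of $W_i$ and $W_j$ is empty, say $A_i\cap A_j=\emptyset$; the other cases are analogous. Then $A_i\subseteq A_j^*$. Since $A_i\in\alpha$, the second ultrafilter axiom forces $A_j^*\in\alpha$, contradicting $A_j\in\alpha$. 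In general, each of the four orientations $(A_i^{\pm},A_j^{\pm})$ occurs together in some vertex of the cube. If the corresponding intersection were empty, one of the two halfspaces would be contained in the complement of the other, and that vertex would violate upward closure. Hence $W_i$ and $W_j$ are transverse, and $\dim X$ is at most the maximal number of pairwise transverse walls.

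Next I build a cube from $n$ pairwise transverse walls $W_1,\dots,W_n$. This is the main step. I would start from a principal ultrafilter $\alpha_y$ and move it toward a vertex at which all the $W_i$ are simultaneously "minimal". Choose an orientation $A_i$ of each $W_i$. Define $\beta$ by taking $A_i^*$ for each $i$. For every other wall $V$, choose the halfspace of $V$ containing $\bigcap_i A_i^*$ when such a choice is forced by inclusion, and otherwise use the choice made by $\alpha_y$.

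I expect the main obstacle to be checking that $\beta$ is a consistent DCC ultrafilter lying in $X$, with each $A_i^*$ minimal in $\beta$ so that the flips can be performed independently. The cleanest route is to appeal to the standard argument of \cite[Proposition 3.14]{HW14}, which I would follow. It works with the finite set of walls separating $y$ from points of the nonempty intersections $A_1^{\epsilon_1}\cap\dots\cap A_n^{\epsilon_n}$; this set is finite by \eqref{eq:finiteness}, and an iterative minimality argument produces the corner vertex. Flipping any subset of the $A_i^*$ keeps the ultrafilter consistent, because every intersection of the corresponding halfspaces is nonempty by pairwise transversality, together with the Helly-type property for these finite configurations. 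This yields the $2^n$ vertices, and since $X$ is the cube completion of its $1$-skeleton, they span an $n$-cube. Infinite dimension on either side is handled by applying both directions to every finite $n$.
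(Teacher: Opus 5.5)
Your first inequality (an $n$-cube yields $n$ pairwise transverse walls) is correct and complete. For the reverse inequality the paper gives no argument beyond citing \cite[Proposition 3.14]{HW14}, so deferring to that citation is legitimate. The trouble is that your sketch of the argument has a genuine gap and would not go through as written.

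Its two load-bearing claims are false for general spaces with walls. The sets $A_1^{\epsilon_1}\cap\dots\cap A_n^{\epsilon_n}$ need not be nonempty, and no Helly property holds. For example, take $Y=\{\pm1\}^3\minus\{(1,1,1)\}$ with $A_i=\{y : y_i=1\}$. The three walls are pairwise transverse and $A_1\cap A_2\cap A_3=\emptyset$. Yet the dual complex is a full $3$-cube, whose corner $\{A_1,A_2,A_3\}$ is a non-principal ultrafilter.

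Your $\beta$ also leaves the orientation of the $W_i$ arbitrary. If $y\in\bigcap_i A_i^*$, your rule returns $\beta=\alpha_y$. There $A_i^*$ fails to be minimal as soon as some halfspace $B$ has $y\in B\subsetneq A_i^*$. Moreover $\bigcap_i A_i^*$ may be empty, so ``the halfspace containing $\bigcap_i A_i^*$'' should read ``the halfspace containing some $A_i^*$''.

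The missing observation is that consistency is a pairwise condition: a choice of one halfspace per wall is an ultrafilter iff no two chosen halfspaces are disjoint. With this, the repair goes as follows.
\begin{enumerate}
\item Orient each $W_i$ so that $y\in A_i$, which is always possible.
\item Let $\beta$ choose $A_i^*$ on $W_i$. On any other wall having a halfspace that contains some $A_i^*$, choose that halfspace; this is well defined because $A_i^*\cap A_j^*\neq\emptyset$. On all remaining walls, use the choice of $\alpha_y$.
\item The walls where $\beta$ differs from $\alpha_y$ separate $y$ from a point of some $A_i^*$. By \eqref{eq:finiteness} there are finitely many, so $\beta$ is DCC and lies in $X$.
\item A short case check shows that no two halfspaces chosen by $\beta$ are disjoint.
\item Each $A_i^*$ is minimal in $\beta$. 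A chosen $E\subsetneq A_i^*$ would either contain some $A_j^*$, contradicting transversality, or contain $y$, contradicting $y\in A_i$.
\item Flipping any subset of the $A_i^*$ keeps every pair of chosen halfspaces intersecting. For pairs among the $W_j$ this is transversality. For the rest it is minimality: if $A_i\cap E=\emptyset$ then $E\subsetneq A_i^*$.
\item If two consistent ultrafilters differ on a single wall, the flipped halfspace is automatically minimal, so they are adjacent.
\end{enumerate}
The $2^n$ flips therefore span an $n$-cube in $X$, and no Helly property is used.
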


\begin{definition}
    The $i$th \emph{midcube} of the cube $[0,1]^n$ is the subset of points whose $i$th coordinate is equal to $\frac{1}{2}$.  
    A \emph{hyperplane} of a $\CAT(0)$ cube complex is a minimal nonempty subset whose intersection with each cube is either empty or equal to a midcube.  
\end{definition}
Hyperplanes in a $\CAT(0)$ cube complex are in one-to-one correspondence with equivalence classes of edges, where the equivalence relation is generated by $e_1\sim e_2$ whenever $e_1$ and $e_2$ are opposite sides of the same square.  

\begin{remark}\label{rem:minimality}
    Edges dual to a vertex in the cube complex coming from a wallspace are in one-to-one correspondence with the \emph{minimal} halfspaces chosen by that vertex.
\end{remark}


\section{Cubes from divisions}\label{sec: cubes}

In this section we explain how a system of divisions satisfying our main Assumption~\ref{as:main} gives rise to a wallspace with a $G$--action and hence (using Section~\ref{app:wallstocubes}) a $\CAT(0)$ cube complex with a $G$--action.

\begin{definition}
\label{def: walls on triples}
  An \emph{off-the-wall point} for $\mc{D}$ is any point not in a cut set $C_D$ for $D\in \mc{D}$.
  We write $M_\mc{D}$ for the set of off-the-wall points.  In other words,
  \[ M_\mc{D} = M \minus \bigcup_{D\in \mc{D}} C_D. \]
  Since $M$ is a Baire space and $\bigcup_{D\in \mc{D}} C_D$ is a countable union of nowhere dense subsets,
  $M_\mc{D}$ is dense in $M$.
For any space $X$, we write $\Theta(X)$ for the set of unordered triples of distinct points in $X$.  

  We define a set $\ThetaD\subset \Theta(M)$ by $\ThetaD = \Theta(M_\mc{D})$.  This $\ThetaD$ will be given the structure of a space with walls; a cube complex is then determined via the construction recalled in Section~\ref{app:wallstocubes}. 

  Given $D\in\mc{D}$, we have $M\setminus C_D = M_D^+\sqcup M_D^-$.  Define the halfspaces $W_D^\pm$ by
  \begin{align*}
    W_D^+ &= \left\{\left. \{a,b,c\}\in \ThetaD\right| \#\left(\{a,b,c\}\cap M_D^+\right) \ge 2\right\}\\
    W_D^- &= \left\{\left. \{a,b,c\}\in \ThetaD\right| \#\left(\{a,b,c\}\cap M_D^-\right) \ge 2\right\}.
  \end{align*}
  Each of $W_D^+$ and $W_D^-$ will be called a \emph{halfspace} in $\ThetaD$.
  Observe that $\ThetaD = W_D^+\sqcup W_D^-$, so we obtain a wall $W_D =\{W_D^+,W_D^-\}$.  We let $\mc{W}_{\mc{D}}$ be the collection of these walls.
\end{definition}
The relationship between $M_D^\pm$ and $W_D^\pm$ can be explained as follows.  Each triple is polled as to which side of the division it lies on, and majority rules.

We next show that these decompositions make $\ThetaD$ into a space with walls.

\begin{lemma}\label{lem:wallspace}
  The collection $\mc{W}_{\mc{D}}$ gives $\ThetaD$ the structure of a space with walls.
\end{lemma}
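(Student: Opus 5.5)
The plan is to check the three requirements in the definition of a space with walls: each $W_D^\pm$ is a nonempty subset of $\ThetaD$; the collection is closed under complement; and the finiteness condition~\eqref{eq:finiteness} holds.

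\emph{Complements.} Closure under complement is immediate. Since $\ThetaD = W_D^+\sqcup W_D^-$, we have $(W_D^+)^* = W_D^-$.

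\emph{Nonemptiness.} First, $M$ has no isolated points. Indeed, if $\{p\}$ were open, it would also be closed because $M$ is $T_1$. Since $M$ is nondegenerate, this would disconnect the path connected space $M$. Hence every nonempty open subset $U\subseteq M$ is infinite. Removing finitely many points from $U$ leaves a nonempty open set (again by $T_1$), and $M_\mc{D}$ is dense. Inductively, $U\cap M_\mc{D}$ is therefore infinite. Apply this to the nonempty open set $M_D^+$ (open because it is clopen in the open set $M\minus C_D$). Picking two distinct off-the-wall points in $M_D^+$ and any third off-the-wall point gives an element of $W_D^+$. The same argument works for $W_D^-$.

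\emph{Finiteness: setup.} This is the main step. Suppose, for a contradiction, that triples $x=\{a,b,c\}$ and $y=\{a',b',c'\}$ are separated by infinitely many distinct walls $W_{D_i}$. Only finitely many divisions share a given cut set, so we may pass to a subsequence in which the $C_i := C_{D_i}$ are pairwise distinct. After reorienting each division, we may assume $x\in W_{D_i}^+$ and $y\in W_{D_i}^-$ for all $i$. By pigeonhole over the finitely many patterns, we may further assume that the same two points, say $a,b$, of $x$ lie in $M_{D_i}^+$ for all $i$, and that $a',b'$ lie in $M_{D_i}^-$ for all $i$.

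\emph{Path-crossing principle.} Any path $\gamma$ from a point of $M_{D_i}^+$ to a point of $M_{D_i}^-$ must meet $C_i$. Otherwise $\gamma^{-1}(M_{D_i}^\pm)$ would disconnect $[0,1]$.

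\emph{First limit point.} Take a path $\gamma$ from $a$ to $a'$ and choose $t_i$ with $\gamma(t_i)\in C_i$. By compactness of $[0,1]$, pass to a subsequence with $t_i\to t$. Continuity then gives $\gamma(t_i)\to z:=\gamma(t)$, so $z\in\lim C_i$.

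\emph{Second limit point.} Since $a\neq b$, at least one of them, call it $u$, differs from $z$. Similarly choose $v\in\{a',b'\}$ with $v\neq z$. Then $u\neq v$, since they lie on opposite sides. Because $z$ is not a path cut point, there is a path $\gamma'$ from $u$ to $v$ avoiding $z$. This path meets every $C_i$ of the current subsequence. Repeating the compactness argument produces a further subsequence and a limit point $z'\in\gamma'([0,1])$, so $z'\neq z$. Limits along a subsequence remain limits of that subsequence, so the subsequence of distinct cut sets has at least two limit points $z,z'$. This contradicts point-convergence of $\{C_D\}$ and establishes~\eqref{eq:finiteness}.

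The delicate points are arranging that $u,v\neq z$, which needs the pigeonhole step giving two fixed points on each side, and using only sequential limits along paths, since $M$ is not assumed metrizable or compact.
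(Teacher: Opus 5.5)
Your proof is correct and follows essentially the paper's argument. Both proofs use the facts that every path between the two separated pairs must meet each $C_{D_i}$, extract a limit point of the cut sets along a path via compactness of $[0,1]$, and play point-convergence off against the absence of path cut points (you phrase this contrapositively, producing a second limit point on a path avoiding $z$, whereas the paper shows its limit point lies on every such path). The remaining differences are minor: you obtain pairwise distinct cut sets from the finitely-many-divisions-per-cut-set clause of point-convergence rather than from $G$--finiteness and $G$--fullness, and you also verify that each $W_D^\pm$ is nonempty, which the paper leaves implicit (it follows from Lemma~\ref{lem: Baire}).
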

\begin{proof}
  We need to show, that for any $\{x,y,z\}, \{a,b,c\}\in \ThetaD$, the set of divisions $D$ so that $\{x,y,z\}\in W_D^+$ and $\{a,b,c\}\in W_D^-$ is finite.  Arguing by contradiction we suppose there are infinitely many such walls and show that this forces $M$ to contain a path cut point.  

  Since the set $\mc{D}$ of divisions is assumed to be $G$--finite, we may pass to an infinite subcollection which are translates of some fixed $D\in \mc{D}$.
  After possibly relabeling points, we may suppose that, for an infinite sequence $\{g_i\}$ in $G$ we have
  \begin{equation*}
    \{x,y\}\subset g_i M_D^+\quad \mbox{and}\quad \{a,b\}\subset g_iM_D^-
  \end{equation*}
  Since the divisions are assumed to be $G$--full, $\Stab(D)$ is finite index in $\Stab(C_D)$, so after passing to a subsequence we may assume that all the $g_i C_D$ are distinct.  Let $\sigma$ be a path from $x$ to $a$.  Any path from $\{x,y\}$ to $\{a,b\}$ must meet every $g_i C_D$.  After passing to a subsequence we find a point $p\in \sigma\cap \left(\lim_{i\to\infty}g_i C_D\right)$.

  We claim that $p$ is a path cut point.  Indeed, let $\tau$ be any other path joining the set $\{x,y\}$ to the set $\{a,b\}$.  As above, we pass to a further subsequence so that $\tau\cap \left(\lim_{i\to\infty} g_i C_D\right)$ contains a point $q$.  Because of point convergence we must have $q = p$, so $p$ is contained in every path joining $\{x,y\}$ to $\{a,b\}$.  At most one of the four points $\{x,y,a,b\}$ can be equal to $p$, so there is a pair of points not equal to $p$ so that every path joining them contains $p$. Thus $p$ is a path cut point.

  This contradicts our assumption on $M$, so we have verified that $\mc{W}_{\mc{D}}$ gives $\ThetaD$ the structure of a space with walls.
\end{proof}
\begin{definition}\label{def:X(D)}
    Lemma~\ref{lem:wallspace} and the construction described in Section~\ref{app:wallstocubes} gives us a cube complex dual to the space with walls $(\ThetaD,\mc{W}_{\mc{D}})$. 
    We write $X(\mc{D})$ for this cube complex.
\end{definition}

For the convenience of the reader we restate our first main result.
\mainconstruction*

\begin{proof}
  The wallspace structure $\mc{W}_{\mc{D}}$ on $\ThetaD$ is preserved by the action of $G$ on $\ThetaD$, so $G$ also acts on the dual cube complex $X=X(\mc{D})$ from Definition~\ref{def:X(D)}.  Recall each vertex of $X$ corresponds to a certain function $\eta$ from the collection of walls $\mc{W}_{\mc{D}}$ to the collection of halfspaces so that $\eta(W)\in\{W^+,W^-\}$ for each $W$.  If $\eta$ and $\eta'$ are connected by an edge, then they differ on exactly one such $W$, so each edge determines a wall $W$.  Two edges dual to the same hyperplane determine the same wall.  Thus a hyperplane stabilizer is equal to a wall stabilizer, which is equal to a division stabilizer.

  To see each division stabilizer is also a hyperplane stabilizer, we note that every wall has at least one edge in $X(\mc{D})$ dual to it, since by Lemma~\ref{lem: Baire} there are uncountably many off-the-wall points in $M_D^+$ and $M_D^-$ for any division $D$.  In particular, there are principal ultrafilters making each possible choice on $D$. 
  
\end{proof}
\begin{remark}\label{rem:inversions}
    If a hyperplane $H$ comes from the division $D = (C_D,\{M_D^+,M_D^-\})$ we see from the above proof that the stabilizer of $D$ is the same as the stabilizer of $H$.  We will refer to $H$ as the \emph{hyperplane dual to $D$}.
    The subgroup of $\Stab(H)$ which also stabilizes each of the two half-spaces of $X(\mc{D})$ defined by $H$ is equal to $\Stab(M_D^+)=\Stab(M_D^-)$.
\end{remark}

We now give conditions under which the resulting complex is a tree.

\begin{theorem}\label{thm:tree}
  Together with Assumption~\ref{as:main}, assume the following.
  \begin{enumerate}
      \item For every $D\in \mc{D}$, the cut set $C_D$ is connected.
      
      \item\label{itm:nonsep intersection} Let $D,D'\in \mc{D}$ be distinct, and let $I = C_D\cap C_{D'}$.  Then $C_D\setminus I$ and $C_{D'}\setminus I$ are connected and $M\setminus I$ is path-connected.

  \end{enumerate}
  Then the cube complex $X(\mc{D})$ is a tree. Furthermore, if for every $D\in \mc{D}$, $\Stab(C_D) = \Stab(D)$, then $G$ acts on $X(\mc{D})$ without inversions.
\end{theorem}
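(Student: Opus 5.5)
The plan is to use Proposition~\ref{prop: dimension cube complex}. A connected $\CAT(0)$ cube complex of dimension at most one is a tree. So it suffices to show that no two walls $W_D, W_{D'}$ of $\mc{W}_{\mc{D}}$, with $D\neq D'$, are transverse. Concretely, for some choice of signs $\epsilon,\epsilon'$ the quadrant $W_D^{\epsilon}\cap W_{D'}^{\epsilon'}$ should be empty.

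To find the empty quadrant, fix distinct $D,D'$ and set $I=C_D\cap C_{D'}$. The set $C_{D'}\setminus I$ is connected, by hypothesis~\eqref{itm:nonsep intersection}, and disjoint from $C_D$. Hence it lies in a single clopen piece of $M\setminus C_D$; after relabeling, $C_{D'}\setminus I\subset M_D^+$. Similarly, after relabeling, $C_D\setminus I\subset M_{D'}^+$. If either set is empty (for instance if $C_D\subseteq C_{D'}$, or $C_D=C_{D'}$), the corresponding sign may be chosen arbitrarily.

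Now set $A=M_D^-\cap M_{D'}^-$. This set is open in $M$. I claim it is also closed in $M\setminus I$. By Definition~\ref{def: division}(3), $\overline{A}\subseteq A\cup C_D\cup C_{D'}$. A point of $C_D\setminus I$ lies in the open set $M_{D'}^+$, which is disjoint from $A$, so it is not in $\overline A$. Likewise no point of $C_{D'}\setminus I$ is in $\overline A$. Hence $\overline A\setminus I=A$. Since $M\setminus I$ is path-connected, hence connected, either $A=\emptyset$ or $A=M\setminus I$. The latter is impossible because $M_D^+\neq\emptyset$, so $A=\emptyset$.

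Suppose now that $\{a,b,c\}\in W_D^-\cap W_{D'}^-$. Then two of the three points lie in $M_D^-$ and two lie in $M_{D'}^-$. By pigeonhole some point lies in both, i.e.\ in $A$, a contradiction. So $W_D^-\cap W_{D'}^-=\emptyset$, the walls are not transverse, $\dim X(\mc{D})\le 1$, and $X(\mc{D})$ is a tree.

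For the statement about inversions, suppose $g$ inverts an edge. That edge is dual to the hyperplane of some $D$, so $g$ stabilizes this hyperplane while exchanging its two halfspaces. By Remark~\ref{rem:inversions}, $g\in\Stab(D)\setminus\Stab(M_D^+)$, i.e.\ $g$ swaps $M_D^+$ and $M_D^-$. The hypothesis $\Stab(C_D)=\Stab(D)$ has to rule this out.

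I expect this step to be the main obstacle, because it depends on exactly how $\Stab(D)$ is meant. If $\Stab(D)$ is read as fixing each of $M_D^{\pm}$ (the reading under which $G$--fullness is a genuine condition), then $\Stab(D)=\Stab(M_D^+)$ and the conclusion is immediate. Otherwise I would first try subdividing the edges of $X(\mc{D})$ to remove inversions. I would also check whether the paper intends the ordered reading of a division; if so, the proof is just the observation above.
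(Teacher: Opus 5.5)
Your proof is correct and is essentially the paper's. Both arguments reduce to non-transversality via Proposition~\ref{prop: dimension cube complex}, orient $D$ and $D'$ by where the connected sets $C_{D'}\minus I$ and $C_D\minus I$ lie, and use connectivity of $M\minus I$ to show the remaining quadrant $M_D^-\cap M_{D'}^-$ is empty. The only differences are that the paper uses a path from $x$ to $C_D\minus I$ avoiding $I$ (truncated at its first hit of $C_D$), whereas your clopen argument needs only connectedness of $M\minus I$, and that you spell out the pigeonhole step from $M$ to $\ThetaD$, which the paper leaves implicit.

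For inversions, the paper just cites Remark~\ref{rem:inversions}, which gives the conclusion once the hypothesis is read as $\Stab(C_D)=\Stab(M_D^+)$. So commit to your ordered reading and drop the subdivision fallback, since subdividing would change $X(\mc{D})$. The reason to commit is not $G$--fullness, which is the same condition under both readings because the two stabilizers differ by index at most $2$. The reason is that under the unordered reading the claim is false. Take the reflection group of a compact right-angled polyhedron in $\mathbb{H}^3$ acting on $M=S^2$, with $\mc{D}$ the orbit of the division cut out by the boundary circle of one face-plane $\Pi$; distinct translates have disjoint circles, so all hypotheses hold. Yet the reflection in $\Pi$ inverts the dual edge.
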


\begin{proof}
  By Proposition \ref{prop: dimension cube complex} to show the cube complex is a tree we must show that no two walls are transverse.  To do this we must show that, for an arbitrary pair of walls, disjoint halfspaces can be chosen.  We will first show that the corresponding statement is true in $M$.

  Let $D$ and $D'$ be distinct divisions, giving rise to the walls $W_D$ and $W_{D'}$.  
  By~\eqref{itm:nonsep intersection}, the cut sets $C_D$ and $C_{D'}$ differ.

  Let $I = C_D\cap C_{D'}$.  By assumption~\eqref{itm:nonsep intersection}, the set $C_D\minus I$ is connected.  It is therefore contained in $M_{D'}^+$ or $M_{D'}^-$.  We choose $\epsilon\in \{\pm\}$ so that $C_D\minus I \subset M_{D'}^\epsilon$.  Similarly, we choose $\delta\in \{\pm\}$ so that $C_{D'} \minus I \subset M_{D}^\delta$.

\begin{claim}
  $M_{D'}^{-\epsilon}\cap M_D^{-\delta}=\emptyset$
\end{claim}

\begin{proof}[Proof of claim]
  Let $x\in M_{D'}^{-\epsilon}$.
  Since $C_D\minus I\subset M_{D'}^\epsilon$, any path from $C_D\minus I$ to $x$ passes through $C_{D'}$.

  On the other hand, suppose $x\in M_D^{-\delta}$.  Since $I$ doesn't path-separate $M$, there is a path $\sigma$ from $x$ to $C_D\minus I$ missing $I$.  Truncating, we may assume $\sigma$ only meets $C_D$ at its endpoint $c\ne x$.  The connected set $\sigma \minus \{c\}$ lies entirely in $M_D^{-\delta}$.  But $C_{D'}\minus I \subset M_D^\delta$, so $\sigma$ misses $C_{D'}$ and $x$ cannot be in $M_{D'}^{-\epsilon}$.  
\end{proof}
The claim implies that $W_{D'}^{-\epsilon}\cap W_D^{-\delta} = \emptyset$, so the walls $W_D$ and $W_{D'}$ do not cross, so $X(\mc{D})$ is a tree.
The statement about inversions follows from Remark~\ref{rem:inversions}.
\end{proof}

In general, the cube complex we construct may have a global fixed point and thus may not be so useful.  This happens for example if the action of $G$ on $M$ factors through a finite group.  Another way this happens is if there is some $G$--invariant way to choose halfspaces $M_D^{\pm}$ for all $D\in \mc{D}$ at once. For an explicit example, consider a hyperbolic group with boundary a Sierpinski carpet and extend the boundary action to an action on the $2$--sphere.  Now, let $\mc{D}$ be the family of divisions of the $2$--sphere coming from the boundary circles.  Orienting all these circles toward the limit set gives an ultrafilter fixed by the entire group.

Here is a criterion which rules out this kind of behavior.

\begin{proposition}\label{prop:noglobalfp}
Suppose that for some $g\in G$ and
$D\in \mc{D}$, 
\begin{enumerate}
    \item\label{itm:nest} $gM_D^+\subset M_D^+$; and
    \item\label{itm:proper} $M_D^+\setminus\overline{gM_D^+}$ is nonempty.
\end{enumerate}
If $H$ is the hyperplane corresponding to $D$, then 
$g$ acts on $X(\mc{D})$ hyperbolically, with axis crossing $H$.  In particular $H$ is an essential hyperplane.
\end{proposition}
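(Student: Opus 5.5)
The plan is to show that $g$ \emph{skewers} $H$: some halfspace of $X(\mc{D})$ bounded by $H$ is mapped by $g$ properly inside itself. Then I would invoke the standard fact that an automorphism of a $\CAT(0)$ cube complex that skewers a hyperplane is hyperbolic with an axis crossing that hyperplane (Sageev; Caprace--Sageev). Essentiality of $H$ follows immediately, since both halfspaces of $H$ then contain points arbitrarily far from $H$ along the axis.

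First, transfer hypothesis \eqref{itm:nest} to the wallspace. Take a triple $\{a,b,c\}\in gW_D^+=W_{gD}^+$. It has at least two points in $gM_D^+\subset M_D^+$, so it lies in $W_D^+$. Hence $gW_D^+\subseteq W_D^+$, and dually $W_D^-\subseteq gW_D^-$.

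Next, use \eqref{itm:proper} to see that the inclusion is proper and that the walls $W_D$ and $gW_D$ are distinct. The set $U=M_D^+\setminus\overline{gM_D^+}$ is nonempty and open, because $M_D^+$ is clopen in the open set $M\setminus C_D$. By Lemma~\ref{lem: Baire} (the Baire argument used in the proof of Theorem~\ref{thm:sageev}), $U$ contains at least three off-the-wall points $a,b,c$. The triple $\{a,b,c\}$ lies in $W_D^+$. It has no points in $gM_D^+$, so it lies in $gW_D^-$. Therefore $gW_D^+\subsetneq W_D^+$, and in particular $gW_D\neq W_D$. Moreover $gW_D^+\cap W_D^-=\emptyset$, so the two walls are not transverse.

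Now pass to $X(\mc{D})$. Let $h$ be the halfspace of $H$ consisting of DCC ultrafilters $\alpha$ with $W_D^+\in\alpha$; then $gh$ consists of those with $gW_D^+\in\alpha$. By the upward-closure axiom of ultrafilters and $gW_D^+\subseteq W_D^+$, we get $gh\subseteq h$. The inclusion is proper and the hyperplanes $gH$ and $H$ are distinct: the principal ultrafilter of $\{a,b,c\}$ lies in $h\setminus gh$. Iterating gives a strictly descending chain $h\supsetneq gh\supsetneq g^2h\supsetneq\cdots$. So the hyperplanes $g^kH$, $k=1,\dots,n$, all separate a vertex $v\in h\setminus gh$ from $g^nv$. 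This yields $d(v,g^nv)\ge n$, so $g$ has unbounded orbits and positive translation length.

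The main obstacle is getting from this to ``hyperbolic with axis crossing $H$'' without hand-waving. I would first try to cite the skewering lemma directly. Failing that, I would use Haglund's theorem that combinatorial automorphisms of $\CAT(0)$ cube complexes are semisimple, possibly after passing to $g^2$ or the cubical subdivision. This makes $g$ (or a power of $g$, which has the same axes) hyperbolic. Any axis $\ell$ of $g$ must then cross $H$: otherwise $\ell$ would lie in $h$ or in $h^*$. Since $\ell$ is $g$-invariant, it would then lie in every $g^kh$ for all $k\in\mathbb{Z}$, or in every $g^kh^*$. But $\bigcap_k g^kh$ contains no $g$-invariant geodesic line, because the $g^kH$ are nested and separate $\ell(t)$ from $\ell(t+\text{translation length})$.
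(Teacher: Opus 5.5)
Your proposal is correct and follows essentially the same route as the paper: off-the-wall points in $M_D^+\setminus\overline{gM_D^+}$ give a principal ultrafilter that is separated from its $g^n$-translate by the $n$ nested hyperplanes $g^kH$, so the displacement grows linearly; the paper then simply asserts that $g$ is hyperbolic with axis crossing $H$. Your appeal to Haglund's semisimplicity makes that last step more explicit than the paper does. One fix is needed there: the reason you give for why an axis cannot stay in $h$ is muddled, and it is cleaner to say that a vertex in every $g^kh$ ($k\ge 0$), or in every $g^kh^*$ ($k\le 0$), would contain an infinite strictly descending chain of halfspaces, contradicting the DCC.
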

\begin{proof}
    By Lemma~\ref{lem: Baire} and assumption~\eqref{itm:proper}, there is some triple $t = (a,b,c)$ in $\Theta_{\mc{D}}$ so that $\{a,b,c\}\subset M_D^+\setminus\overline{gM_D^+}$.  Let $\eta$ be the principal ultrafilter for $t$.  Since $t\notin g M_D^+$, we have $g^k W_D^+\notin \eta$ for $k>0$.
    The nesting~\eqref{itm:nest} implies that, for $k\ge 0$, $g^kt\in M_D^+$, so $g^{-k}W_D^+\in \eta$ and moreover that $g^kH$ is disjoint from $g^lH$ whenever $k\ne l$.  

    Multiplying by a power of $g$, we have, for any $n$
    \[ g^kW_D^+\in g^n\eta\quad\mbox{iff}\quad k\le n.\]
    (The ultrafilter $g^n\eta$ is the principal ultrafilter for $g^nt$.)  In particular $\eta$ and $g^n\eta$ differ on at least $|n|$ walls (each of which is a translate of $\{W_D^+,W_D^-\}$ by a power of $g$), and $d(g^m\eta,g^n\eta)\ge |m-n|$.  It follows that $g$ acts hyperbolically on $X(\mc{D})$ with axis crossing the hyperplane $H$.
\end{proof}

\section{Convergence groups and relative hyperbolicity}\label{sec:convergence}
In this section we recall some background on convergence groups.  In particular we give the dynamical definitions of relative hyperbolicity and relative quasiconvexity. 
Nothing in this section is really new, but for the convenience of the reader we give a proof of Proposition~\ref{prop:qc iff null}, which is a variation on a result of Haïssinsky, Paoluzzi, and Walsh.    Proposition~\ref{prop:qc iff null} describes the interaction of our main assumption with geometrically finite convergence actions.

\subsection{Convergence groups in general}\label{sec: conv groups}
In this subsection we review the basic definitions. For more on convergence groups see \cite{GehM87,Tuk94,Bow99Conv, Freden95, Freden97}.

Let $M$ be a compact metrizable space and $G$ a group acting on $M$ by homeomorphisms. 
The action of $G$ on $M$ is said to be a \emph{convergence group action} if for any infinite collection of group elements there is a sequence $\{g_i\}$ (called a \emph{convergence sequence}) and two  points $a,b$ so that 
\begin{equation}\label{eq:convergence}
    g_i x \to a,\mbox{ locally uniformly for } x\neq b.
\end{equation}  
The points $a$ and $b$ are called the \emph{attracting} and \emph{repelling} points of the sequence, respectively; they need not be distinct.
If $\{g_i\}$ is a convergence sequence, so is $\{g_i^{-1}\}$; the roles of the attracting and repelling points are reversed.  By \emph{locally uniformly} we mean, if $C$ is a compact subset of $M\setminus\{\beta\}$ and $U$ is any open neighborhood of $\alpha$, then there is an $N\in\mathbb{N}$ such that $g_{k_i} C\subset U$ for all $i>N$.
The set of possible points $a$ occurring in~\eqref{eq:convergence} is called the \emph{limit set} of $G$, and is written $\Ld{G}$. 
The complement of $\Ld{G}$ is written $\Omega(G)$, and is called the \emph{domain of discontinuity}.  Appropriately, $G$ acts properly discontinuously on $\Omega(G)$; see \cite[Theorem 2L]{Tuk94}.

Elements of convergence groups can be classified into three types: elliptic, loxodromic, and parabolic. A group element is \emph{elliptic} if it has finite order. An element $g$ of $G$ is \emph{loxodromic} if has infinite order and fixes exactly two points of $M$. In this case the sequence of positive powers of $g$ is a convergence sequence, and the two points of $\Ld{\langle g\rangle}$ are the attracting and repelling points. If $g\in G$ has infinite order and fixes a single point of $M$ then $g$ is \emph{parabolic}. An infinite subgroup $P$ of $G$ is \emph{parabolic} if it contains no loxodromic elements and stabilizes a single point $p$ of $M$.  The point $p$ is uniquely determined by $P$, and the point $p$ is called a \emph{parabolic} point. We call $p$ a \emph{bounded parabolic} point if $P$ acts properly and cocompactly on $M\setminus\{p\}$.

A point $x\in M$ is a \emph{conical limit point} point if there exists a sequence of group elements $\{g_n\}\in G$ and distinct points $a, b \in M$ such that $g_nx\rightarrow a$ and $g_ny\rightarrow b$ for every $y\in M\setminus\{x\}$. Tukia has shown (see \cite{Tukia98Conical}) that a conical limit point cannot be a parabolic point.

 A convergence group $G$ acting on $M$ is called \emph{uniform} if every point of $M$ is a conical limit point, or equivalently the action on space of distinct triples of $M$ is proper and cocompact (see \cite{Bow99Conv}). Bowditch has shown \cite{Bow98TopCha} $G$ acts as a uniform convergence group on some compact metric space if and only if $G$ is hyperbolic.

\subsection{Relative Hyperbolicity and relative quasiconvexity}

We next review the notions of relative hyperbolicity and relative quasiconvexity. Then we prove a result about the nullity of limit sets of relatively quasiconvex subgroups. 

A convergence group $G$ is called \emph{geometrically finite} if every point of $M$ is a conical limit point or a bounded parabolic point. Suppose $\mc{P}$ is a set of representatives of the conjugacy classes of maximal parabolic subgroups we say the action of $(G,\mc{P})$ on $M$ is geometrically finite. 

\begin{definition}[RH-1 in\cite{Hruska10}] \label{def:relhyp}
Suppose $(G,\mc{P})$ acts as a geometrically finite convergence group on a compact metrizable space. Then $(G,\mc{P})$ is \emph{relatively hyperbolic}. 
\end{definition}

 The space in Definition~\ref{def:relhyp} only depends up to homeomorphism on $(G,\mc{P})$ \cite{Bow12} \cite{Yaman04} and is called the \emph{Bowditch boundary}, denoted $\bndry(G,\mc{P})$. 
 We note that there are only finitely many orbits of parabolic points in $\bndry(G,\mc{P})$ (Proposition 6.15 of \cite{Bow12}).

\begin{definition}[QC-1 in \cite{Hruska10}]\label{def: dynamic rel qc}

A subgroup $H\leq  G$ is \emph{relatively quasiconvex} if the
following holds. Let $M$ be some (any) compact, metrizable space on which
$(G, \mc{P})$ acts as a geometrically finite convergence group. Then the induced
convergence action of $H$ on the limit set $\Lambda(H)\subset M$ is geometrically finite.
\end{definition}

Hruska in \cite{Hruska10} gives a number of equivalent definitions of relative hyperbolicity and relative quasiconvexity. 
Some of these make reference to a \emph{cusped space} for the pair $(G,\mc{P})$, a hyperbolic space acted on by the group $G$ so that the action on its Gromov boundary is geometrically finite.  A combinatorial construction can be found in~\cite{GM08}; see \cite[Section 3]{HH23} for a summary of a construction due to Bowditch~\cite{Bow12}.

A version of the following proposition was independently noticed by Ha\"issinsky--Paoluzzi--Walsh~\cite[Proposition 2.3]{HPW23}.
\begin{proposition}\label{prop:qc iff null}
  Let $(G,\mc{P})$ be relatively hyperbolic, let $M = \partial(G,\mc{P})$, 
  and let $C\subset M$ be a closed set with at least two points and with no isolated parabolic points.  Then $\{gC\mid g\in G\}$ is a null family if and only if $C$ is the limit set of a relatively quasiconvex subgroup.
\end{proposition}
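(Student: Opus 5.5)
We prove the two implications separately, taking $H = \Stab(C)$ as the candidate relatively quasiconvex subgroup throughout.

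\emph{Quasiconvex $\Rightarrow$ null.} Suppose $C = \Ld{H}$ with $H$ relatively quasiconvex. We argue by contradiction. If the family is not null, there are infinitely many distinct translates $g_iC$ of diameter bounded below. Since distinct translates correspond to distinct cosets $g_iH$, we may choose coset representatives and pass to a convergence subsequence with attracting point $a$ and repelling point $b$.

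Because the $g_iC$ do not shrink, $b$ must lie in $C$: otherwise $g_iC \to a$ uniformly and the diameters tend to zero. We then use the standard fact that relatively quasiconvex subgroups have finite height and bounded coset penetration. Concretely, we use the characterization of relative quasiconvexity in the cusped space (Hruska \cite{Hruska10}): the weak hull of $C$ in the cusped space is $H$--cocompact away from horoballs.

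With the representatives $g_i$ chosen to minimize distance to the basepoint within the coset, the quasi-geodesic hulls of $g_iC$ get far from the basepoint. Hence, by hyperbolicity, their visual diameters shrink. The hypothesis of no isolated parabolic points (together with $H$ acting geometrically finitely on $C$) handles the case where the hull enters a horoball: parabolic points in $C$ are bounded parabolic for $H$, so the hull near a horoball is controlled by $H\cap P$ acting cocompactly. I expect this hull-distance argument to be routine. This is essentially the argument of \cite[Proposition 2.3]{HPW23}.

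\emph{Null $\Rightarrow$ quasiconvex.} Now assume $\{gC\}$ is null, and again let $H = \Stab(C)$. This is the main obstacle, and there are three steps.

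\emph{Step one: $\Ld{H} \subseteq C$.} If $h_i \in H$ is a convergence sequence with attracting point $a$ and repelling point $b$, pick $x\in C\setminus\{b\}$, using $|C|\ge 2$. Then $h_ix\in C$ converges to $a$, and $C$ is closed, so $a \in C$.

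\emph{Step two: every point of $C$ is a conical limit point or a bounded parabolic point for $H$.} Let $x\in C$ be conical for $G$, with $g_n x\to a$ and $g_n y\to b$ for $y\ne x$. Since $|C|\ge 2$, the sets $g_nC$ contain points near $a$ and, via $y\in C\setminus\{x\}$, points near $b$. So their diameters are bounded below, and nullity forces finitely many distinct translates $g_nC$. Passing to a subsequence, $g_nC = g_1C$, so $g_1^{-1}g_n \in H$. Then $g_1^{-1}g_n$ witnesses conicality of $x$ for $H$, which also gives $x\in\Ld{H}$.

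Now let $p\in C$ be parabolic with stabilizer $P$. Since $p$ is not isolated in $C$, the set $C\setminus\{p\}$ is nonempty. Take a compact fundamental domain $K$ for $P$ acting on $M\setminus\{p\}$, and consider the translates $g^{-1}C$ for $g\in P$ meeting $K$. These are translates of diameter bounded below (they meet $K$ and contain $p$), so only finitely many are distinct. Hence $P\cap H$ has finite index in the set of $g\in P$ with $gC\cap K\ne\emptyset$. It follows that $P\cap H$ acts cocompactly on $C\setminus\{p\}$, so $p$ is bounded parabolic for $H$, and $p\in\Ld{H}$.

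\emph{Step three: conclude.} Steps one and two give $C=\Ld{H}$ and show that $H$ acts geometrically finitely on it. This is exactly Definition~\ref{def: dynamic rel qc}.

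The delicate point is the cocompactness deduction in the parabolic case. I expect to make it precise by covering $K$ by the finitely many distinct sets $gC$ that meet it, and checking that their pieces give a compact fundamental domain.
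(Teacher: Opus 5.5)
Your argument is the paper's in both directions. For ``only if'' you take $H=\Stab(C)$, transfer conical points by using nullity to force the translates $g_nC$ to coincide, and get cocompactness at a parabolic point from the finiteness of the distinct translates $gC$, $g\in P$, that meet a compact set $K$ with $PK=M\minus\{p\}$. For ``if'' you show that translates of large diameter force their cosets close to the basepoint of a cusped space.

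There is one genuine omission, in the parabolic case. You use the non-isolation of $p$ only to get $C\minus\{p\}\neq\emptyset$, but that already follows from $\#C\ge 2$. You never show that $S=P\cap H$ is infinite. Without this you cannot conclude that $p$ is a (bounded) parabolic point for $H$, nor that $p\in\Lambda(H)$. This is exactly where the hypothesis on isolated parabolic points is needed. The fix has three parts:
\begin{enumerate}
\item $S$ acts properly on $C\minus\{p\}$, by restricting the proper action of $P$ on $M\minus\{p\}$.
\item $S$ acts cocompactly, because $I=\{g\in P\mid gC\cap K\neq\emptyset\}$ is a finite union $\bigsqcup_{t\in T}tS$. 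Hence $C\minus\{p\}\subseteq S\cdot\big(C\cap\bigcup_{t\in T}t^{-1}K\big)$, and this is the compact set you were looking for; nothing needs to ``cover $K$''.
\item If $S$ were finite, $C\minus\{p\}$ would be compact, hence closed in $M$, so $p$ would be isolated in $C$, a contradiction.
\end{enumerate}

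In the ``if'' direction, your convergence-sequence preamble (showing $b\in C$) is never used. The isolated-parabolic hypothesis plays no role there either: a limit set with at least two points is either perfect or consists of the two conical fixed points of a loxodromic. The horoball issue you defer as routine is handled in the paper by a simple depth estimate, not by geometric finiteness at parabolic points. Suppose a geodesic $\gamma$ joining two points of $g\Lambda(H)$ passes within $R$ of the identity at a point $x$. Then $x$ has depth at most $R$, so some group element $y\in\gamma$ satisfies $d(x,y)\le R+2$. Relative quasiconvexity in the cusped space then puts $gH$ within $2R+\mu+2$ of the identity. Since the induced metric on $G$ is proper, only finitely many cosets qualify.
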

\begin{proof}
  The ``if'' direction is well known, see for example \cite[Corollary 2.5]{GMRS98} in the absolute case.  We give a sketch of the proof.  Let $X = X(G,\mc{P},S)$ be a (combinatorial) cusped space for the pair $(G,\mc{P})$, and endow $M = \partial(G,\mc{P})$ with a visual metric $\rho$ based at the identity.  Then for any $\epsilon$ there is an $R$ so that if $\diam(g\Ld{H})>\epsilon$ there is some geodesic with endpoints in $g\Ld{H}$ which meets the $R$--ball around the identity in $X$.  Let $gH$ be one of these cosets and $\gamma$ the geodesic.  Relative quasiconvexity of $H$ implies that any point of $\gamma\cap G$ is within $\mu$ of some element of $gH$, where $\mu$ is a constant depending only on $H$ and the space $X$ (\cite[Definition QC-3]{Hruska10}).  Letting $x$ be a point on $\gamma$ so that $d_X(1,x)\le R$, the \emph{depth} of $x$ (ie distance from $G$) is at most $R$.  This implies there is a group element $y$ on $\gamma$ with $d_X(x,y) \le R+2$.  Relative quasiconvexity gives $z\in gH$ with $d_X(y,z) \le \mu$, so $gH$ meets the $(2R + \mu + 2)$--ball around the identity in $X$.  The induced $X$--metric on $G$ is proper, so there are only finitely many cosets which could meet this ball.

  In the other direction we suppose that $\{gC\mid g\in G\}$ is a null family.
    We show that $\Stab(C)$ acts as a geometrically finite convergence group on $C$.  (This is \cite[Definition QC-2]{Hruska10}, and comes from the work of Dahmani.) In other words, we show that every point of $C$ is either a conical limit point or a bounded parabolic point for the action of $\Stab(C)$.  

  Suppose first that $x\in C$ is a conical limit point for the action of $G$.
  That is to say there is a pair of distinct points $p,q$ and a sequence $\{g_i\}$ of elements of $G$ so that $g_ix\to p$ and $g_iy\to q$ for all $y\ne x$.  Since $C\minus \{x\}$ is nonempty, this implies that the diameters of the sets $g_i C$ are bounded below.  Since $\{gC\mid g\in G\}$ is a null family, this means that infinitely many must coincide.  Passing to a subsequence we may assume that $g_iC = g C$ for all $i$ and some fixed $g\in G$.  Consider the sequence $\{g_i' = g^{-1}g_i\}$.  This sequence lies in $\Stab(C)$, and still exhibits the fact that $x$ is a conical limit point.  Namely, we have $g_i'x\to g^{-1}p$ and $g_i'y\to g^{-1}q$ for all $y\ne x$, so $x$ is a conical limit point for the action of $\Stab(C)$, as desired.

  Now we must show that the remaining points are bounded parabolic.  
  Let $x$ be a point of $C$ which is not a conical limit point for the action of $G$.  Then $x$ is a bounded parabolic point for the action of $G$.  Let $P$ be the stabilizer of $x$ in $G$, and let $S<P$ be the stabilizer of $x$ in $\Stab(C)$.  
  Let $M_p = M\minus \{p\}$, and let $C_p = C\minus\{p\}$.  We first show that $S$ acts properly and cocompactly on $C_p$.  Properness follows from the properness of the action of $P$ on $M_p$.  It remains to show cocompactness.

  Let $F\subset M_p$ be a compact set so that $PF = M_p$.  The set
  \[I =  \{g\in P\mid F\cap g C_p\ne \emptyset\} \]
  is a union of cosets of $S$, say $I = \sqcup_{t\in T} tS$.  If $t\in T$, then $tC_p\cap F\ne \emptyset$, which means that $tC$ meets both $x$ and $F$.  Since this puts a lower bound on the diameter of $tC$, and $\{gC\}$ is a null family, the set $T$ is finite.  But
  \[ C_p \subset \cup_{g\in I} g^{-1}F = S \cup_{t\in T} t^{-1}F. \]
  Since $T$ is finite, $\cup_{t\in T} t^{-1}F$ is compact, and so the $S$--action on $C_p$ is cocompact.

  Since $p$ is not isolated, $S$ must be infinite, and so $p$ is a parabolic point for the action of $H$ on $C$.
\end{proof}

\subsection{Splittings of Relatively Hyperbolic Groups}\label{subsec: cut point tree}

 In this subsection we recall the notion of a peripheral splitting  \cite{Bow01_Peripheral} of a relatively hyperbolic group. This is required for the main results of Section~\ref{sec: canonical division}. A \emph{peripheral splitting} of $(G, \mc{P})$ is a finite bipartite graph of
groups representation of $G$, where $\mc{P}$ is a set of representatives of conjugacy classes of vertex groups of one color of the partition. Bowditch and Dasgupta have shown that $(G,\mc{P})$ has a peripheral splitting if and only if $M=\bndry(G,\mc{P})$ has a cut point (see Theorem~\ref{thm: peripheral splittings} below).

The below theorem due to Bowditch \cite{Bow01_Peripheral} and Dasgupta-Hruska \cite{DH24} describes the \emph{cut point tree} of $\bndry(G,\mc{P})$. The cut point tree is determined by the topology of $\bndry(G,\mc{P})$ but it encodes group theoretic information: the peripheral splittings of $G$.  Before we state the theorem we provide a definition.

\begin{definition}\label{def: cyclic element}
    A subset $A \subseteq \bndry(G,\mathcal{P})$ is a \emph{cyclic element} if $A$ consists of a single cut point or contains a non-cut point $p$ and all points $q$ that are not separated from $p$ by any cut point of $M$.
\end{definition}

\begin{theorem}[Bowditch, Dasgupta-Hruska]
\label{thm: peripheral splittings}
Let $(G,\mathcal{Q})$ be relatively hyperbolic with connected boundary $M=\bndry(G,\mathcal{Q})$.
Let $T$ be the bipartite graph with vertex set $\Pi \sqcup K$, where $\Pi$ is the set of cut points and $K$ is the set of nontrivial cyclic elements of $M$.  Two vertices $v \in \Pi$ and $w \in K$ are connected by an edge in $T$ if and only if the cut point $v$ is contained in the cyclic element $w$.

The graph $T$ is equal to the canonical JSJ tree of cylinders for splittings of $G$ over parabolic subgroups relative to $\mathcal{Q}$.
The edges of $T$ lie in finitely many $G$--orbits and the stabilizer of each edge is finitely generated.
\end{theorem}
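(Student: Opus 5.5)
Since this theorem is quoted from the literature, the plan is to assemble it from known structural facts about Bowditch boundaries rather than prove it from first principles. The order is: (1) $T$ is a tree, (2) the cut point vertices are parabolic and cofinite, (3) the cyclic element vertex stabilizers are relatively quasiconvex, (4) identification with the tree of cylinders. The main obstacle is (4).

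\textbf{Step 1: $T$ is a tree.} The first input is that $M$ is a Peano continuum, i.e. connected and locally connected. For a connected Bowditch boundary this is Bowditch's result, with its finite generation hypotheses removed by Dasgupta--Hruska. For any Peano continuum, the cut points and nontrivial cyclic elements, joined by the incidence relation in the statement, form a tree. Betweenness in this tree is exactly separation by cut points, and the ``tree of cyclic elements'' theory of Whyburn gives this. The action of $G$ on $M$ permutes cut points and cyclic elements, so $G$ acts on $T$ without inversions, since the bipartition is preserved.

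\textbf{Step 2: cut points are parabolic, with finitely many orbits.} Next I would show that every cut point of $M$ is a parabolic point. A conical limit point $x$ cannot be a cut point. Otherwise one pushes the two sides of $M\setminus\{x\}$ by a sequence exhibiting conicality, $g_ix\to a$ and $g_iy\to b$ for $y\ne x$. The translates of a fixed pair of points separated by $x$ then converge to $b$. This forces the images of the complementary components to shrink, contradicting that $g_i x\to a\ne b$ while $M$ stays connected. Since there are finitely many orbits of parabolic points (Proposition 6.15 of \cite{Bow12}), $\Pi$ is $G$--finite. Each cut point $p$ meets finitely many $\Stab(p)$--orbits of cyclic elements, because $\Stab(p)$ acts cocompactly on $M\setminus\{p\}$. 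Together these give finitely many $G$--orbits of edges.

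\textbf{Step 3: edge stabilizers and vertex groups.} For $w\in K$ the stabilizer $\Stab(w)$ acts on the cyclic element $w$. I would show, as in Proposition~\ref{prop:qc iff null}, that $w$ is the limit set of $\Stab(w)$ and that $\Stab(w)$ is relatively quasiconvex. The nullity input is that distinct cyclic elements of a Peano continuum form a null family. The edge stabilizer $\Stab(v)\cap\Stab(w)$ is then a maximal parabolic subgroup of the induced relatively hyperbolic structure on $\Stab(w)$. Parabolic subgroups of relatively quasiconvex subgroups are finitely generated when the peripheral structure is. Alternatively, one can follow Dasgupta--Hruska, who obtain finite generation of edge groups without assuming it.

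\textbf{Step 4: identification with the tree of cylinders.} This is where I expect the main difficulty. The plan is to use Bowditch's correspondence: peripheral splittings are exactly the splittings whose Bass--Serre tree embeds equivariantly and compatibly into $T$. The argument then has two parts. First, $T$ is itself a splitting over parabolic subgroups relative to $\mathcal{Q}$, and it is universally elliptic, because any parabolic splitting has its edge groups fixing cut points. Second, $T$ dominates every such splitting, since vertex groups of $T$ fix cyclic elements, which cannot be cut by further parabolic splittings. Hence $T$ is a JSJ tree. It equals its own tree of cylinders because the cylinders, for the commensurability relation on parabolic edge groups, correspond exactly to the stars of the vertices in $\Pi$. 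Checking this last point, together with removing the finite generation hypotheses, is where I would rely on \cite{DH24}.
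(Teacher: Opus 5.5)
The paper does not prove this statement; it is quoted from Bowditch~\cite{Bow01_Peripheral} and Dasgupta--Hruska~\cite{DH24}. Assembling it from the literature is therefore the same approach. The genuine gap is in Step~1, which skips the actual content of the theorem. It is not true that for an arbitrary Peano continuum the incidence graph of cut points and nontrivial cyclic elements is a tree. For an arc, every interior point is a cut point and there are no nontrivial cyclic elements, so $T$ has no edges at all. Whyburn's theory only gives a betweenness relation (a pretree). That this pretree is discrete, with finitely many cut points between any two vertices, has to be proved \emph{using the group}, so Step~2 must come first. For instance, once cut points are known to be bounded parabolic, one rules out accumulation as follows. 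Suppose distinct cut points $c_n\to c$ all separate a fixed point $y$ from the cut point $c$. Choose $h_n\in\Stab(c)$ with $h_nc_n$ in a compact $F\subset M\minus\{c\}$ satisfying $\Stab(c)F=M\minus\{c\}$. After passing to a subsequence, $h_ny\to c$. A connected neighbourhood of $c$ disjoint from $F$ then joins $h_ny$ to $c$ in the complement of $h_nc_n$, a contradiction.

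The mechanism in Step~2 is also wrong. Since $g_iU\cup g_iV=M\minus\{g_ix\}$, the images of the two sides cannot both shrink. What works is local connectedness at $b$, which is the deep input from \cite{DH24}. A connected neighbourhood of $b$ whose closure misses $a$ contains $g_iu$ and $g_iv$ but not $g_ix$ for large $i$, so $u$ and $v$ lie in one component of $M\minus\{x\}$.

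Steps~3 and~4 contain further false claims. In Step~3, ``parabolic subgroups of relatively quasiconvex subgroups are finitely generated when the peripheral subgroups are'' is false. Every subgroup of a peripheral subgroup is relatively quasiconvex, e.g.\ the commutator subgroup of a peripheral $F_2$. In any case the statement assumes nothing about $\mathcal{Q}$. A direct argument is available. By the proof of Proposition~\ref{prop:qc iff null} with $C=w$ (using nullity of cyclic elements), $\Stab(p)\cap\Stab(w)$ acts properly and cocompactly on $w\minus\{p\}$. This space is locally compact and connected, since a nontrivial cyclic element has no cut points of its own. A group acting properly and cocompactly on such a space is finitely generated.

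In Step~4 the cylinders must be formed using co-elementarity, as in \cite{GL11}: $A\sim B$ iff $\langle A,B\rangle$ is parabolic, i.e.\ $A$ and $B$ fix the same point of $M$. Commensurability is the wrong relation. With it, the star of a cut point $p$ splits into several cylinders whenever $\Stab(p)$ contains non-commensurable edge groups. For example, amalgamate two one-holed-torus groups with $\mathbb{Z}^2$, identifying their boundary curves with $(1,0)$ and $(0,1)$ respectively. The tree of cylinders you describe is then not $T$.

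Finally, universal ellipticity needs no cut-point argument. Every parabolic subgroup lies in a conjugate of some $Q\in\mathcal{Q}$, hence is elliptic in any tree relative to $\mathcal{Q}$. The real input for maximality is the converse half of Bowditch's theorem, applied to $\Stab(w)$ with its induced peripheral structure, whose boundary $w$ has no cut points. That is where the relative quasiconvexity from Step~3 is needed.
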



\section{Limit Sets of Vertex Stabilizers}\label{sec: Limit Sets}
In this section we restrict to the situation in which $M$ is compact metric space and $G$ acts as a convergence group.  
We analyze the limit sets of vertex stabilizers.  (Stabilizers of higher-dimensional cubes are virtually intersections of finitely many cube stabilizers, so in principle these can also be described by the results in this section.)
First, recall: For $D\in \mc{D}$ and $v$ a vertex of $X(\mc{D})$, we write $h_v(D)$ for either $M_D^+$ or $M_D^-$ depending on which halfspace the ultrafilter $v$ chooses.  The stabilizer of a vertex $v$ is denoted $G_v$. 
Throughout this section, we make the standing assumption that $\mc{D}$ is a $G$-finite point-convergent collection of divisions, and that $X=X(\mc{D})$.

Later in the section we will specialize to the case that $M$ is the boundary of a hyperbolic or relatively hyperbolic group.  By results of Bestvina-Mess \cite{BestvinaMess91}, Swarup \cite{Swarup96}, and \cite{Bowditch99Connectedness} the boundary of a one-ended hyperbolic group is locally connected and without cut points. Similarly, Dasgupta-Hruska \cite{DH24} have shown that the Bowditch boundary of a relatively hyperbolic group is locally connected if it is connected.  
Compact connected locally connected metric spaces are locally path connected~\cite[Theorem 31.4]{Willard}.  In particular Lemma~\ref{lem:pathcut} tells us that if one of these boundaries has no cut point, it also has no path cut point.  Thus the hypotheses on $M$ in Assumption~\ref{as:main} hold for such boundaries and the cube complex $X = X(\mc{D})$ exists for any collection of divisions $\mc{D}$ satisfying the remaining assumptions.

In the generality of a convergence action satisfying our main assumptions we have the following.

\begin{lemma}\label{lem:containrevised}
Suppose, in addition to Assumption~\ref{as:main}, that $M$ is compact metric and $G$ acts on $M$ as a convergence group.  Let $v$ be a vertex of the dual cube complex.  Then
\begin{equation}\label{eq:containment}\Ld{G_v}\subseteq \bigcap_{D\in\mc{D}} h_v(D)\cup C_D.
    \end{equation}
\end{lemma}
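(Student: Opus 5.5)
The idea is to argue by contradiction: a point of $\Ld{G_v}$ lying strictly on the wrong side of some division would force infinitely many elements of $G_v$ to move that division's wall into a fixed finite set of walls. That in turn pushes $x$ into a cut set.

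Suppose $x\in\Ld{G_v}$, and suppose there is $D\in\mc{D}$ with $x\notin h_v(D)\cup C_D$. Write $U$ for the other side of $D$, that is, $M\minus(h_v(D)\cup C_D)$. Since $C_D$ is closed and $U$ is clopen in $M\minus C_D$, the set $U$ is open in $M$. Because $x$ is a limit point of $G_v$, there is a convergence sequence $\{g_i\}$ of distinct elements of $G_v$ with attracting point $x$ and some repelling point $b$.

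\textbf{Step 1: a finite set of disagreements.} Since $v$ lies in the component $X$ of the dual complex containing the principal ultrafilters, it differs from any principal ultrafilter on only finitely many walls (cf.~\cite[Section 3]{HW14}). By Lemma~\ref{lem: Baire}, off-the-wall points are dense and plentiful, so we can choose a triple $t=\{a,b',c\}\in\ThetaD$ with $b\notin t$. Let $F$ be the finite set of walls on which $v$ and $\alpha_t$ disagree. Since $g_iv=v$ and $g_i\alpha_t=\alpha_{g_it}$, the walls on which $v$ and $\alpha_{g_it}$ disagree form exactly the set $g_iF$.

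\textbf{Step 2: the translated triples fall on the wrong side.} The set $t$ is a compact subset of $M\minus\{b\}$, so $g_it\subset U$ for all large $i$. Then $\alpha_{g_it}$ picks the halfspace of $W_D$ corresponding to $U$, while $v$ picks the one corresponding to $h_v(D)$. Hence $W_D\in g_iF$, so $g_i^{-1}W_D\in F$ for all large $i$. Since $F$ is finite, we may pass to a subsequence with $g_i^{-1}W_D$ constant. Walls determine divisions, so $g_i^{-1}D=g_1^{-1}D$, and therefore $h_i:=g_ig_1^{-1}\in\Stab(D)\subseteq\Stab(C_D)$. Because $g_1$ is a fixed homeomorphism, $\{h_i\}$ is again a convergence sequence with attracting point $x$ and repelling point $g_1b$.

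\textbf{Step 3: the attracting point lies in $C_D$.} We want a point $y\in C_D$ with $y\ne g_1b$. Then $h_iy\in C_D$ and $h_iy\to x$, and since $C_D$ is closed this gives $x\in C_D$. That contradicts $x\in U$ and finishes the proof.

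To find $y$ we need $C_D$ to have at least two points. It is nonempty, since $M\minus C_D$ is a union of two disjoint nonempty open sets and $M$ is connected. If $C_D=\{p\}$, then any path from a point of $M_D^+$ to a point of $M_D^-$ must pass through $p$, because the two sides are disjoint open sets. So $p$ would be a path cut point, contradicting Assumption~\ref{as:main}.

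I expect the main obstacle to be Step 1. It relies on the fact that vertices of $X$ differ from principal ultrafilters on only finitely many walls, and on choosing $t$ generic enough to avoid the repelling point $b$.
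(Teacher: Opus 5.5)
Your proof is correct, and it takes a genuinely different route from the paper's.

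\textbf{The paper's route.} It first uses proper discontinuity of $G_v$ on $\Omega(G_v)$, together with consistency of $v$, to show that every $\overline{h_v(D)}$ meets $\Ld{G_v}$; this settles the one-point case. It then invokes Tukia's density of loxodromic fixed points in $\Ld{G_v}$. For a loxodromic $g\in G_v$ whose fixed points straddle $D$, it uses point-convergence to keep the repelling point out of $C_D$. North--south dynamics then give a strictly descending chain $g^{-n}h_v(D)\subsetneq h_v(D)$, contradicting DCC. Closedness of the intersection finishes the argument.

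\textbf{Your route.} You treat an arbitrary limit point directly with a single convergence sequence. The finite Hamming distance from $v$ to a principal ultrafilter, plus pigeonhole, forces $g_ig_1^{-1}\in\Stab(D)$. Invariance and closedness of $C_D$, with $\#C_D\ge 2$ from the no-path-cut-point hypothesis, then capture the attracting point. Only pointwise convergence on the three points of $t$ is actually needed.

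\textbf{What each buys.} Your version avoids Tukia's Lemma~2Q, the domain of discontinuity, and the elementary/non-elementary case split. It also uses point-convergence only through the finiteness condition already established in Lemma~\ref{lem:wallspace}. The paper's argument, by contrast, uses only that $v$ is a consistent DCC ultrafilter. It would therefore apply to every DCC ultrafilter, whereas yours genuinely needs $v$ to lie in the component of the principal ultrafilters. That hypothesis is available here, so this costs you nothing for the statement as given.

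\textbf{One step to justify.} You assert without proof that a wall determines its division. This is true. The halfspace $W_D^\pm$ determines $M_D^\pm\cap M_{\mc D}$. By density of off-the-wall points, the closure of that set is $M_D^\pm\cup C_D$. Then $C_D$ is the intersection of the two closures. The paper uses the same fact implicitly when it identifies wall stabilizers with division stabilizers.
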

\begin{proof}
    Our assumptions imply that, for each $D\in \mc{D}$, the closure of $h_v(D)$ is equal to $h_v(D)\cup C_D$.
    
    We may assume that $G_v$ is infinite.  If some $\overline{h_v(D)}$ is disjoint from $\Ld{G_v}$, then the proper discontinuity of the action on $\Omega(G_v)$ means there is some $g\in G_v$ so that $g\overline{h_v(D)}\cap \overline{h_v(D)}$ is empty.  But this implies that $v$ is inconsistent, a contradiction.  We conclude that $\Ld{G_v}\cap \overline{h_v(D)}$ is nonempty for each $D\in \mc{D}$.  If $\Ld{G_v}$ is a singleton, we are finished.

    Suppose then that $\Ld{G_v}$ contains at least two points.  Lemma 2Q of~\cite{Tuk94} tells us that there is a dense set in $\Ld{G_v}$ consisting of fixed points of loxodromics in $G_v$.  If we can show that each such loxodromic fixed point is contained in $\bigcap \overline{h_v(D)}$, then since that intersection is closed, we will be finished.
    
      So let $g\in G_v$ be a loxodromic, fixing $p,q\in \Ld{G_v}$, and let $D\in \mc{D}$.   
    If neither $p$ nor $q$ is contained in $\overline{h_v(D)}$, there is a power of $g$ which takes $\overline{h_v(D)}$ off of itself, violating the consistency condition again.  So suppose that $p\in \overline{h_v(D)}$, but $q\notin \overline{h_v(D)}$.  Replacing $g$ by its inverse if necessary, we may assume that $q$ is the attracting fixed point for $g$.
    Note first that $p$ cannot be contained in $C_D$, or the translates $\{g^n C_D\mid n\ge 0\}$ would give infinitely many cut sets with diameter bounded below, violating the point-convergence condition.  So $p\in h_v(D)$.  But this implies that (possibly after replacing $g$ with a power) $g^{-1}\big(h_v(D)\big)\subsetneq h_v(D)$.  This nesting implies that $v$ fails the DCC, which is a contradiction.
    \end{proof}

In general, the reverse inclusion to~\eqref{eq:containment} does not hold, even for $M$ equal to a Bowditch boundary $\partial (G,\mc{P})$.  For example, a proper cocompact cubulation of a relatively hyperbolic group can have vertices for which the right-hand side consists of a single parabolic point. Since the cubulation is proper, the left-hand side will be empty. On the other hand, we will see below that the reverse inclusion holds in the special case that $G$ is hyperbolic and $M = \partial G$.  

Lemma~\ref{lem:containrevised} gives the following properness criterion.
\begin{corollary}
    Suppose, in addition to Assumption~\ref{as:main}, that $M$ is compact metric and $G$ acts on $M$ as a convergence group.  If for every vertex $v$ of the cube complex $X(\mc{D})$, we have 
    \[ \bigcap_{D\in\mc{D}} h_v(D)\cup C_D = \emptyset,\]
    then the action $G\acts X(\mc{D})$ is proper.
\end{corollary}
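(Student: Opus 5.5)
The corollary follows from Lemma~\ref{lem:containrevised} together with the characterization of proper actions of convergence groups. First I would interpret "proper" for an action on a cube complex: it suffices to show that every vertex stabilizer $G_v$ is finite. Since $X(\mc{D})$ is a $\CAT(0)$ cube complex, each cube stabilizer contains the pointwise stabilizer of its vertices with finite index, and every cube has finitely many vertices. So cube stabilizers are finite once vertex stabilizers are. A point stabilizer of any point of $X$ fixes the barycenter of its carrier cube, hence stabilizes that cube, and is therefore finite as well. If the intended notion of properness is the stronger metric one, one also needs local finiteness of $X$. I would treat the corollary as asserting the finite-stabilizer version, which is the standard meaning in this cubulation context.

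The key step is then to show that $G_v$ is finite for each vertex $v$. By hypothesis the right-hand side of \eqref{eq:containment} is empty. Lemma~\ref{lem:containrevised} therefore gives $\Ld{G_v}=\emptyset$. For a convergence group action on a compact metrizable space, an infinite subgroup has nonempty limit set. Indeed, given infinitely many distinct elements of $G_v$, the convergence property produces a convergence sequence $\{g_i\}\subset G_v$ with an attracting point $a$, and by definition $a\in\Ld{G_v}$. Hence $G_v$ is finite.

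The only mild subtlety is bookkeeping. The proof of Lemma~\ref{lem:containrevised} begins "we may assume $G_v$ is infinite", so the lemma says nothing directly when $G_v$ is finite. That case is exactly the one we want, so the argument runs in the right direction: assume $G_v$ is infinite, obtain a limit point, and contradict the emptiness. There is no real obstacle; the main care point is the passage from finite vertex stabilizers to properness of the action on the whole complex, handled as in the first paragraph.
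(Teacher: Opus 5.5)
Your argument is correct and is essentially the paper's: the paper gives no separate proof, treating the corollary as immediate from Lemma~\ref{lem:containrevised}, which forces $\Ld{G_v}=\emptyset$, so $G_v$ is finite because an infinite subgroup of a convergence group has nonempty limit set. Your reading of ``proper'' as finite cell stabilizers matches the paper's own usage (its remark that a proper cubulation has vertices with $\Ld{G_v}=\emptyset$). Your caveat is also fair, and the paper does not address it: metric properness would additionally require local finiteness of $X(\mc{D})$, which is not available here without a cocompactness assumption.
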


\subsection{The hyperbolic case}
The main goal of this subsection is to prove the following (always under Assumptions~\ref{as:main}).

\intersectionofhalfspaces*
Accordingly, for this subsection, we make the further assumption that $G$ is hyperbolic, and $M$ is equal to $\partial G$.

Our first subgoal is to show that the action of $G$ on the cube complex $X(\mc{D})$ is cocompact (Proposition~\ref{prop: hyp cocompact}). 
Closely related statements are discussed many places in the literature.  The first such result is \cite[Theorem 3.1]{Sageev97} (which relies on results in \cite{GMRS98}); see also \cite[Theorem 5]{NR03} in the setting of Coxeter groups.
We will need the following.
\begin{defobs}\label{defobs:join}
 Let $Y$ be a $\delta$--hyperbolic geodesic space, and let $\Omega$ be a subset of $\bndry Y$ containing at least two points.  The \emph{join of $\Omega$},  $\Jn(\Omega)$, is the union of all geodesic lines joining points of $\Omega$. 
  The set $\Jn(\Omega)$ is $4\delta$--quasiconvex.
\end{defobs}

\begin{lemma}\label{lem:transverse implies close join} Let $G$ be hyperbolic with generating set $S$ and $M=\bndry G$.
    Assume $D_1=(C_1,\{M_1^{+},M_1^{-}\})$ and $D_2=(C_2,\{M_2^{+},M_2^{+}\})$ be divisions of $M$ with $C_1$ and $C_2$ distinct. Suppose that $W_1=W_{D_1}$ and $W_2=W_{D_2}$ are transverse. Then there is an $R>0$ which depends only on $\mc{D}$ such that $\mc{N}_R\big(\Jn(C_1)\big)\cap\mc{N}_R\big(\Jn(C_2)\big)\neq\emptyset$ .
\end{lemma}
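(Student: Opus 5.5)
The plan is to turn transversality of the walls into a geometric statement in the Cayley graph $Y=\Cay(G,S)$ and then use $G$--finiteness of $\mc{D}$ to make the constant uniform. First, unwind the transversality of $W_1$ and $W_2$. The intersection $W_1^{+}\cap W_2^{+}$ is nonempty, so some triple has at least two points in $M_1^+$ and at least two points in $M_2^+$. By pigeonhole, two of its three points are shared, so that pair lies in $M_1^+\cap M_2^+$; in particular this set is nonempty. Repeating the argument for the other three combinations shows that all four ``quadrants'' $M_1^{\pm}\cap M_2^{\pm}$ are nonempty. Each quadrant is open in $M\minus(C_1\cup C_2)$, and off-the-wall points are dense, so we may choose points $x_{\epsilon\delta}\in M_1^\epsilon\cap M_2^\delta$ for all four sign choices.

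Second, use these points to force the joins close together. The plan is to take the bi-infinite geodesics $\gamma=[x_{++},x_{--}]$ and $\gamma'=[x_{+-},x_{-+}]$. The endpoints of $\gamma$ lie on opposite sides of $C_1$, and also on opposite sides of $C_2$; the same holds for $\gamma'$. The key coarse fact to establish is the following. If two boundary points $a,b$ are separated by a division with cut set $C$, then every geodesic $[a,b]$ passes within a uniform distance $R_0$ of $\Jn(C)$. One way to prove this is to let $p$ be a nearest-point projection onto the $4\delta$--quasiconvex set $\Jn(C)$. If $[a,b]$ stayed far from $\Jn(C)$, then the projection of $[a,b]$ onto $\Jn(C)$ would be coarsely a single point. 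In that case $a$ and $b$ could be joined within $\partial G$ by a connected set avoiding $C$, which contradicts the separation. Here I would use that $\partial G$ is linearly connected with a visual metric: points whose Gromov product based at the projection point is large can be joined by a path that stays in a small visual ball, away from $C$. I expect this step to be the main obstacle, because it needs quantitative connectivity of $\partial G$, namely local connectedness with uniform constants, which comes from the Bestvina--Mess / Bowditch results cited earlier.

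Third, conclude. The geodesic $\gamma$ meets $\mc{N}_{R_0}(\Jn(C_1))$ and $\mc{N}_{R_0}(\Jn(C_2))$, and so does $\gamma'$. That alone only shows that each join is close to each geodesic, which is not yet enough. The additional input is that the ideal quadrilateral with vertices $x_{++},x_{+-},x_{--},x_{-+}$ is $2\delta$--thin. The separation pattern of its four vertices forces $\Jn(C_1)$ to come close to the quadrilateral ``between'' the pairs $\{x_{++},x_{+-}\}$ and $\{x_{-+},x_{--}\}$. Likewise, $\Jn(C_2)$ must come close between the pairs $\{x_{++},x_{-+}\}$ and $\{x_{+-},x_{--}\}$. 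I would apply the step-two fact to all four sides whose endpoints are separated, namely $[x_{++},x_{-+}]$, $[x_{+-},x_{--}]$ for $C_1$ and $[x_{++},x_{+-}]$, $[x_{-+},x_{--}]$ for $C_2$, together with quasiconvexity of the joins. This shows that $\Jn(C_1)$ coarsely crosses the quadrilateral from one pair of opposite sides to the other, and $\Jn(C_2)$ crosses it the other way. In a thin quadrilateral, two such quasiconvex ``crossing'' sets must pass within a constant of each other, which is a tree-approximation argument. This yields $R$ depending on $\delta$ and $R_0$.

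Finally, uniformity in $\mc{D}$ follows from the finiteness assumptions. $\mc{D}$ is $G$--finite and $G$ acts isometrically on $Y$, so only finitely many cut sets occur up to translation. The constant $R_0$ in step two can therefore be chosen as a maximum over finitely many representatives, and the remaining constants depend only on $\delta$.
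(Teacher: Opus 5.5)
Your route differs from the paper's mainly in how the separation input is obtained and how the argument is finished. The paper also uses the ideal quadrilateral on four quadrant points. It then \emph{asserts} that $\mc{N}_K(\Jn(C_i))$ separates $\Cay(G,S)$ into deep components. It picks $g_1,g_3\in\mc{N}_1$ on the two sides whose endpoints $C_1$ separates, and splits into cases according to whether $[g_1,g_3]$ meets $\mc{N}_2$. The key point there is that the broken path from $a$ through $g_1,g_3$ to $d$ must cross $\mc{N}_2$.

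You prove only the weaker statement that a geodesic between $C$--separated boundary points passes near $\Jn(C)$, and your argument for it is correct. Basing the visual metric at the projection point makes $a,b$ visually close and both uniformly far from $C$. Linear connectedness of $\partial G$ (Bonk--Kleiner, resting on Bestvina--Mess, Bowditch and Swarup) then joins them by a small continuum missing $C$. This justifies what the paper takes for granted, with a constant depending only on $G$. The cost is that you can test only geodesics between separated points, not arbitrary paths, so the conclusion must come from the combinatorics of the four points.

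That final step fails as you state it. Approximate the four points by a tree whose central segment $[u,w]$ separates $\{x_{++},x_{--}\}$ (legs at $u$) from $\{x_{+-},x_{-+}\}$ (legs at $w$). This occurs, for instance, for two great circles on $S^2$ with $x_{++},x_{--}$ near one crossing point and $x_{+-},x_{-+}$ near the other. All four sides then contain $[u,w]$.
\begin{itemize}
\item A segment through $w$ joining the $x_{-+}$ and $x_{+-}$ legs meets $[x_{++},x_{-+}]$ and $[x_{+-},x_{--}]$.
\item A segment through $u$ joining the $x_{++}$ and $x_{--}$ legs meets the other two sides.
\end{itemize}
These segments are $d(u,w)$ apart, which is unbounded. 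So quasiconvexity plus contact with opposite sides does not force the joins together. Your diagonals alone do not suffice either: they fail for the other splits.

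The repair is to use, for each $i$, all four geodesics joining $C_i$--separated vertices, namely two sides and two diagonals. Let the tree split be $P\,|\,Q$ with central segment $[u,w]$.
\begin{itemize}
\item If $C_i$ induces a partition other than $P\,|\,Q$, it separates the two points of $P$ and the two points of $Q$. So $\Jn(C_i)$ is $R_0$--close to geodesics on both sides of $[u,w]$, hence by quasiconvexity close to all of $[u,w]$.
\item If $C_i$ induces $P\,|\,Q$, then $\Jn(C_i)$ is close to $[p_1,q_1]$ and $[p_2,q_2]$. Any geodesic between these two meets $[u,w]$, so $\Jn(C_i)$ comes close to some point of $[u,w]$.
\end{itemize}
Since $C_1$ and $C_2$ induce different partitions, one join shadows all of $[u,w]$ and the other touches it, which gives a uniform $R$.

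Minor point: in Step 1 the pigeonhole argument yields one shared point, not a pair, in $M_1^+\cap M_2^+$. One point is all you need.
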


\begin{proof} Suppose that $W_1$ and $W_2$ are transverse. Then the four intersections $W_1^{\pm}\cap W_2^{\pm}$ are all non-empty. So there are points $a\in M_1^{+}\cap M_2^{+}$, $b\in M_1^{-}\cap M_2^{+}$, $c\in M_1^{-}\cap M_2^{-}$, and $d\in M_1^{+}\cap M_2^{-}$. Let $\gamma_{1}=[a,b]$, $\gamma_{2}=[b,c]$, $\gamma_{3}=[c,d]$, and $\gamma_{4}=[d,a]$ be bi-infinite geodesic rays joining points in $\{a,b,c,d\}$ as indicated by the pairs.  By \ref{defobs:join} both $\Jn(C_1)$ and $\Jn(C_2)$ are $4\delta$-quasiconvex. Additionally, there is a $K>8\delta$ so that $\mc{N}_i=\mc{N}_{K}\big(\Jn(C_i)\big)$ separates $\Cay(G,S)$ into at least two deep components for $i\in\{1,2\}$. The neighborhoods $\mc{N}_i$ are $2\delta$-quasiconvex.

Now, we may find vertices $g_1\in\gamma_1\cap \mc{N}_1$, and $g_3\in\gamma_3\cap \mc{N}_1$. We consider two cases: either the geodesic $[g_1,g_3]$ intersects $\mc{N}_2$, or it does not. 

First suppose that there is a vertex $p\in \mc{N}_2\cap[g_1,g_3]$. Since $\mc{N}_1$ is $2\delta$-quasiconvex, we have $d\big(p, \Jn(C_i)\big)<2\delta +K$ for $i\in\{1,2\}$.

Now, suppose $\mc{N}_2\cap[g_1,g_3]=\emptyset$. It must be that one of the bi-infinite geodesics $\gamma_1$ or $\gamma_3$ crosses $\mc{N}_2$. By re-indexing if necessary, we may assume it is $\gamma_3$. We have $\mc{N}_2$ is $2\delta$-quasiconvex, and $g_3$ is in $\mc{N}_1$. As $\gamma_3$ is geodesic, it must be that $d\big(g_3,\Jn(C_i)\big)<2\delta+2K$ for $i\in\{1,2\}$.

We may thus take $R=2\delta+2K$.
\end{proof}

\begin{definition} Let $G$ be a finitely generated  group, and fix a finite generating set (and hence a word metric) for $G$.
A subgroup $H$ has \emph{bounded packing} in $G$ if, for each constant $D$, there is a number $N = N(G,H,D)$ so that for any collection of $N$ distinct cosets
$g_1H,\ldots,g_N H$ in $G$, at least two are separated by a distance of at least $D$.
(The precise function $N$ depends on the generating set, but the existence of such a function does not.)
\end{definition}

    The following is asserted as Lemma~3.3 in~\cite{Sageev97}.  For a proof, see~\cite[Theorem 4.8]{HW09}.
\begin{theorem}\cite{Sageev97}\label{thm:bounded packing} Let $G$ be a hyperbolic group and $H$ a quasiconvex subgroup. Then $H$ has bounded packing.
\end{theorem}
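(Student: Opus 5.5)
The plan is the standard coset-geometry argument. Fix a finite generating set $S$ for $G$ and a constant $D\ge 0$; we must bound the number of distinct cosets of $H$ that are pairwise within distance $D$ in $\Cay(G,S)$. Since $H$ is quasiconvex, it is quasiconvex in the word metric, with some constant $\kappa$, and it has finite height. This is the Gitik--Mitra--Rips--Sageev result \cite{GMRS98}: there is an $n$ such that the intersection of any $n+1$ essentially distinct conjugates of $H$ is finite. I intend to use finite height as the engine, and to induct on the height.

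The first step reduces to cosets passing near a common point. Given cosets $g_1H,\dots,g_NH$ that are pairwise $D$--close, I want a ball of radius $r=r(D,\kappa,\delta)$ which all of them meet. For three cosets, hyperbolicity gives this directly: the quasiconvex hulls of pairwise close quasiconvex sets have a common coarse center, via a thin-triangle argument on geodesics between nearby points. For many cosets, I would use a Helly-type property for quasiconvex subsets of hyperbolic spaces. Pairwise coarsely intersecting $\kappa$--quasiconvex sets all meet a common ball of radius depending only on $\kappa$, $\delta$ and $D$. I expect this to be provable by induction, replacing two sets with a neighborhood of their coarse intersection, which is again quasiconvex with controlled constants. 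Translating, we may assume that the ball is centered at $1$.

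The second step is the counting. All $g_iH$ meet $B(1,r)$, so we may choose representatives $g_i$ of length at most $r$. Then there are at most $|B(1,r)|$ distinct cosets, since distinct cosets with representatives in the ball give distinct elements of the ball. This makes $N=|B(1,r)|+1$ work, and finite height is not even needed once the Helly step is in hand.

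The main obstacle is therefore the Helly-type step: showing that pairwise $D$--close $\kappa$--quasiconvex sets have a uniformly bounded common neighborhood point, with a radius independent of the number of sets. If the inductive constants blow up, I would fall back on the finite height induction of \cite{HW09}. That argument considers the pairwise intersections $g_iHg_i^{-1}\cap g_jHg_j^{-1}$, which are quasiconvex, and uses the fact that a large family of pairwise close cosets forces an infinite common intersection of many conjugates, contradicting finite height.
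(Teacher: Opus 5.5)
You have a genuine gap at the step you yourself flag. The counting step is fine: if pairwise $D$--close cosets always met a common ball of radius $r=r(D,\kappa,\delta)$ \emph{independent of the number $N$ of cosets}, then translating and choosing representatives in $B(1,r)$ gives $N\le |B(1,r)|$, with no need for finite height. (The paper gives no argument of its own here; it cites \cite{Sageev97} and \cite[Theorem 4.8]{HW09}.) But that uniform Helly step is the entire content of the theorem, and the method you propose does not give uniformity. Replacing two sets by a neighborhood of their coarse intersection worsens both the quasiconvexity constant and the closeness constant to the remaining sets; you need the three-set case just to see the new set is still close to each remaining $Q_j$. After $N-1$ steps the radius depends on $N$. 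That is exactly the $m$--dependent fact quoted in the proof of Proposition~\ref{prop: hyp cocompact}, and with $r=r(N)$ the inequality $N\le |B(1,r(N))|$ gives no bound. Your fallback is not yet an argument either. Pairwise closeness of $g_iH$ and $g_jH$ does not make $g_iHg_i^{-1}\cap g_jHg_j^{-1}$ infinite. You also never say how a large pairwise-close family would produce $n+1$ essentially distinct conjugates with infinite common intersection.

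The missing idea is a minimax argument, which needs no induction on $N$ and works in any $\delta$--hyperbolic geodesic space. Replace each coset $Q_i=g_iH$ by $\mathcal{N}_D(Q_i)$, so the sets pairwise intersect and are $\kappa'$--quasiconvex. Let $F(x)=\max_i d(x,Q_i)$. Choose $x$ with $F(x)\le \inf F+1$, put $R=F(x)$, let $q_i\in Q_i$ be $1$--near-closest points to $x$, and let $i_0$ realize the maximum. The projection inequality $(x\mid z)_{q_i}\le C_1(\delta,\kappa')$ for $z\in Q_i$ gives $(q_i\mid z)_x\ge d(x,q_i)-C_1$. Applying it with some $z\in Q_j\cap Q_{i_0}$ (this is where pairwise intersection enters) and using hyperbolicity yields $(q_j\mid q_{i_0})_x\ge d(x,q_j)-C$ for every $j$, with $C=C_1+\delta+1$. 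Now let $x'$ be the point of $[x,q_{i_0}]$ at distance $R/2$ from $x$. Thin triangles give $d(x',q_j)\le R/2+C'$ for all $j$ simultaneously: split according to whether $d(x,q_j)$ is above or below $R/2+C$. Here $C'$ depends only on $\delta$ and $\kappa'$. Hence $\inf F\le (\inf F+1)/2+C'$, so $\inf F\le 2C'+1$, independent of $N$. With this lemma in hand, your counting step completes the proof.
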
    

\begin{proposition}\label{prop: hyp cocompact}
    Assume $G$ is hyperbolic, $M = \partial G$, and $\mc{D}$ is a point convergent $G$--finite collection of divisions of $M$. Then the action on $X$ is cocompact.
\end{proposition}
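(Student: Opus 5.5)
We follow Sageev's cocompactness argument, using Lemma~\ref{lem:transverse implies close join} and Theorem~\ref{thm:bounded packing}. First, $\mc{D}$ is $G$--finite and point-convergent, and $M=\partial G$ is compact metric, so by Remark~\ref{rem: ptconv iff null} the family $\{C_D\}$ is null. Proposition~\ref{prop:qc iff null} (in the absolute case, where there are no parabolic points) then says each $C_D$ is the limit set of a quasiconvex subgroup. Since $D$ is $G$--full, we may take this subgroup to be $H_D=\Stab(D)$. Hence each $\Jn(C_D)$ lies within uniformly bounded Hausdorff distance of a coset $gH_D$, for $D$ ranging over finitely many orbit representatives $D_1,\dots,D_k$.

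Next we bound the dimension. Let $W_{D^1},\dots,W_{D^n}$ be pairwise transverse walls. Their cut sets are pairwise distinct: if $C_D=C_{D'}$ with $D\ne D'$, a separate check is needed, but there are only finitely many divisions per cut set anyway. By Lemma~\ref{lem:transverse implies close join}, the $R$--neighborhoods of the joins pairwise intersect, so the corresponding cosets $g_jH_{D_{i_j}}$ are pairwise within a uniform distance $R'$. Bounded packing (Theorem~\ref{thm:bounded packing}), applied to each of the finitely many $H_{D_i}$, then bounds $n$. By Proposition~\ref{prop: dimension cube complex}, $X$ is finite dimensional.

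For cocompactness we count cubes up to the $G$--action. A maximal cube of $X$ corresponds to a finite family of pairwise transverse walls. The standard Helly-type step (as in Sageev, and \cite[Theorem 3.1]{Sageev97} via \cite{GMRS98}) upgrades pairwise $R'$--closeness of quasiconvex cosets to a single point within distance $R''$ of all of them. Translating by $G$, we may assume this point is within $R''$ of the identity. By properness of the word metric, only finitely many cosets of the $H_{D_i}$ meet the $R''$--ball, so there are finitely many such families of walls up to $G$.

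It remains to see that each family of pairwise transverse walls determines only finitely many cubes. I expect this to be the main obstacle, because vertices of $X$ are ultrafilters and are not determined by finitely many walls alone. The plan is to follow Sageev's argument. Each cube is determined by its family of hyperplanes together with a choice of orientation on every other wall. Because $X$ is finite dimensional and locally finite near these hyperplanes, one shows that the cubes crossed by a given finite family of hyperplanes fall into finitely many orbits under the stabilizer of the family. Alternatively, the consistency and DCC of the ultrafilters should force the orientation of walls far from the family to be determined by the principal orientation pointing toward the family's joins, which pins the cube down up to finitely many choices. In either form, this shows there are finitely many $G$--orbits of cubes, so the action on $X$ is cocompact.
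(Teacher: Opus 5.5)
\textbf{Verdict: there is a gap in your last step.} Your first three steps follow the paper's proof:
\begin{enumerate}
\item quasiconvexity of the division stabilizers via Proposition~\ref{prop:qc iff null};
\item the dimension bound from Lemma~\ref{lem:transverse implies close join} plus bounded packing;
\item the Helly-type point plus cocompactness of $G$ on $\Cay(G,S)$, giving finitely many $G$--orbits of families of pairwise transverse walls.
\end{enumerate}
Your hesitation about $C_D=C_{D'}$ is harmless. In that case the joins coincide, so the cosets are trivially close. Distinct divisions in one orbit still give distinct cosets of $\Stab(D_i)$, so bounded packing still applies.

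The gap is the final step. You leave it open, and you propose to close it either by local finiteness of $X$ near the hyperplanes or by an unspecified DCC argument.
\begin{itemize}
\item Local finiteness fails in general. The tree in the Example of Section~\ref{sec: canonical division} already has vertices of infinite valence.
\item For non-maximal cubes, the finiteness you aim for is simply false. A hyperplane in a product of two infinite-valence trees is crossed by infinitely many edges.
\item The variant ``finitely many cubes up to the stabilizer of the family'' is essentially the cocompactness you are trying to prove.
\end{itemize}

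The missing observation is that a \emph{maximal} cube $Q$ is determined by the set $\{H_1,\dots,H_n\}$ of hyperplanes crossing it:
\begin{itemize}
\item The intersection $\bigcap_i H_i$ is convex, hence connected.
\item It meets $Q$ only in the center of $Q$, since the $H_i$ cut $Q$ in its $n$ distinct midcubes.
\item Any cube containing the center of $Q$ has $Q$ as a face, so by maximality it equals $Q$. Hence a small neighbourhood of the center in $X$ lies inside $Q$, and the center is an isolated point of $\bigcap_i H_i$.
\item A connected set with an isolated point is that single point, so $\bigcap_i H_i$ is just the center of $Q$.
\end{itemize}

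Thus $Q\mapsto\{H_i\}$ is an injective, $G$--equivariant map from maximal cubes to families of pairwise transverse walls. Finitely many orbits of such families then give finitely many orbits of maximal cubes. Since $X$ is finite dimensional, every cube is a face of a maximal cube, so there are finitely many orbits of cubes. This is exactly how the paper concludes: ``every maximal cube in $X$ is in one-to-one correspondence with a family of transverse walls,'' followed by finite dimensionality.
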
    
 
\begin{proof}
    Let $S$ be a finite generating set for $G$. We have $M=\bndry\Cay(G,S)$.
    
    We show that the cube complex is finite dimensional and that there are only finitely many orbits of cubes of any given dimension.
    
    By Proposition~\ref{prop: dimension cube complex}, $X$ is finite dimensional if there is a finite upper bound on the size of collections of pairwise transverse walls.  By Proposition~\ref{prop:qc iff null} our division stabilizers are quasiconvex. Transverse walls correspond to nearby joins of limit sets by Lemma~\ref{lem:transverse implies close join}.
    In turn, nearby joins of limit sets correspond to $R$-close cosets of division stabilizers for some $R$ depending only on the collection of divisions. 
    By Theorem~\ref{thm:bounded packing}, quasiconvex subgroups of hyperbolic groups have bounded packing. Since $\mc{D}$ is $G$-finite, only finitely many quasiconvex subgroups (and their cosets) appear.  Thus
    $X$ is finite dimensional.

    Now, every maximal cube in $X$ is in one-to-one correspondence with a family of transverse walls. Again, transverse walls correspond to $R$-close cosets of division stabilizers. Choosing $\kappa$ large enough so that the hyperplane stabilizers are all $\kappa$-quasiconvex we may apply the following fact. (The result appears in multiple places in the literature. See \cite{HW09} Section 1.2 for a discussion, or \cite[Lemma 7]{NR03} for the proof of a variation of this fact.) 
\begin{quote}
   For any $\delta,\kappa,R$, and $m$ there is an $R'$ so that:  In a $\delta$-hyperbolic space, for any collection of $m$ pairwise $R$--close $\kappa$-quasiconvex subspaces 
   $\{Q_1,\ldots, Q_m\}$ 
   there is a point $p$ such that $d(p, Q_i)\leq R'$. 
\end{quote}

    The action of $G$ on $\Cay(G,S)$ is cocompact, so there are only finitely many orbits of such points. Since $X$ is finite dimensional, we are done. 
\end{proof}

\begin{remark}\label{rem:qcstab}
    Since the division stabilizers are quasiconvex, the vertex stabilizers in the dual cube complex must also be quasiconvex, by \cite[Theorem A]{GM23}. 
\end{remark}

We will use the following construction/theorem from \cite{GM23}.
\begin{theorem}\label{thm: HGAI result}  Suppose $G$ is hyperbolic, and $G$ acts cocompactly on a cube complex $X$ with quasiconvex hyperplane stabilizers.
    Let $X_b$ be the first cubical subdivision of $X$.
    There is a locally finite connected simplicial graph $\Gamma$ acted on geometrically by $G$, and a $G$--equivariant, continuous Lipschitz map $\Psi\from \Gamma\to X_b^{(1)}$ satisfying the following:
    \begin{enumerate}
        \item\label{pairqi} Let $v$ be a vertex of $X$,
        and let $N(v)$ be the union of cubes of $X_b$ containing $b$.  Then
        both $\Psi^{-1}(v)$ and $\Psi^{-1}\big(N(v)\big)$ are connected and $G_v$--invariant.  Moreover,
        there is an equivariant quasi-isometry of pairs 
    \[ (G,G_v) \to \Big(\Gamma,\Psi^{-1}\big(N(v)\big)\Big).\]
         \item\label{intersection} Let $I$ be any non-empty intersection of hyperplanes of $X$.  Then $\Psi^{-1}(I)$ is connected.
         \item\label{lifting} Let $\alpha$ be a combinatorial path in $X^{(1)}$. Then there is a path $\tilde{\alpha}$ in $\Gamma$ such that the image of $\Psi\circ \tilde\alpha$ is equal to the image of $\alpha$.
    \end{enumerate}
\end{theorem}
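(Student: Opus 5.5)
The plan is to build $\Gamma$ as a ``blow-up'' of the $1$--skeleton of $X_b$: each vertex of $X_b$ is replaced by a Cayley graph of its stabilizer, and these are then linked along the edges of $X_b$. The properties (1)--(3) should then follow from two facts: the fibres of $\Psi$ over vertices are connected, and edges of $X_b^{(1)}$ lift.

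\emph{Construction.} Every cell stabilizer of $X_b$ is commensurable with an intersection of finitely many hyperplane stabilizers or vertex stabilizers of $X$. Vertex stabilizers of $X$ are quasiconvex by \cite[Theorem A]{GM23} (cf.\ Remark~\ref{rem:qcstab}). Quasiconvex subgroups are closed under finite intersection and are finitely generated. So every stabilizer $G_w$, $w\in X_b^{(0)}$, is finitely generated, and we fix a finite generating set $S_w$ for each $w$ in a finite set $O$ of $G$--orbit representatives (finite by cocompactness). Let $\Gamma^{(0)}=G\times O$ with $G$ acting by left multiplication, and set $\Psi(h,o)=ho$. We put \emph{fibre edges} $(h,o)$--$(hs,o)$ for $s\in S_o$. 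For each of the finitely many $G$--orbits of edges of $X_b^{(1)}$, we choose a representative $[o,ko']$ with $o,o'\in O$ and add the \emph{transverse edges} $(h,o)$--$(hk,o')$ for all $h\in G$. The map $\Psi$ extends linearly, sending fibre edges to points and transverse edges to edges. Then:
\begin{itemize}
\item $G$ acts freely and cofinitely on $\Gamma$, and $\Gamma$ is locally finite because only finitely many edges are chosen per vertex orbit.
\item $\Psi$ is equivariant and $1$--Lipschitz.
\item $\Psi^{-1}(gw)$ is a translate of a coset $G_o$, and it is connected by the choice of $S_o$.
\end{itemize}

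\emph{Path lifting and connectivity.} Any edge $e$ of $X_b^{(1)}$ equals $g[o,ko']$ for some $g$. So any preimage of one endpoint of $e$ can be joined inside its fibre to an endpoint of a transverse edge lying over $e$. Concatenating such lifts shows that any combinatorial path in $X_b^{(1)}$, in particular one in $X^{(1)}$, lifts to a path $\tilde\alpha$ with $\Psi\circ\tilde\alpha$ having the same image. This gives (3). It also gives connectivity of $\Gamma$, since $X$ is connected. Hence the action on $\Gamma$ is geometric, and $G\to\Gamma$, $g\mapsto (g,o)$, is an equivariant quasi-isometry.

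\emph{Items (1) and (2).} Let $Y$ be any connected subcomplex of $X_b$. Then $\Psi^{-1}(Y)$ is connected: lift paths in $Y^{(1)}$ as above and use fibre connectivity. For (1) we apply this to $Y=\{v\}$ and to $Y=N(v)$; both are $G_v$--invariant. For the quasi-isometry of pairs, note that $\Psi^{-1}(v)$ is a single coset of $G_v$, so the map $G\to\Gamma$ carries $G_v$ to it up to bounded distance. Moreover, $N(v)$ contains only finitely many $G_v$--orbits of cells, because pairs (vertex, cube containing it) fall into finitely many $G$--orbits. So $\Psi^{-1}(N(v))$ lies in a bounded neighbourhood of $\Psi^{-1}(v)$, and vice versa.

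For (2), the key observation is that a nonempty intersection $I$ of hyperplanes of $X$ is a connected convex subcomplex of the cubical subdivision $X_b$: it is a $\CAT(0)$ cube complex whose cubes are the midcube intersections. Applying the connected-subcomplex statement to $I$ then gives (2).

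I expect the main obstacle to be the bookkeeping in (1) and (2). One must check that $I$ and $N(v)$ really are subcomplexes whose $1$--skeleta are connected within $X_b^{(1)}$, so that the lifting argument applies inside them. One must also verify that the coset description of fibres gives the quasi-isometry of pairs rather than just a coarse inclusion. The finite generation of cell stabilizers, via quasiconvexity, is the only place where hyperbolicity is used.
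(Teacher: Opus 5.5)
Your proposal is correct and is essentially the paper's construction made explicit. The graph $\Gamma(\mathbb{A})$ that the paper extracts from the universal cover of $CG(\mc{Y})$ in \cite{GM23} is exactly your $G\times O$, with Cayley-graph edges from generators of cube stabilizers and transverse edges over edges of $X_b^{(1)}$, up to subdividing each transverse edge at a chain object $(\tau\subset\sigma)$; your fibre-connectivity and edge-lifting argument is the paper's Claim. The only divergence is in (1): the paper cites \cite[Lemma 3.12]{GM23} for the finite Hausdorff distance, while you count orbits, and there you need to add that $G_\sigma\cap G_v$ has finite index in $G_\sigma$ for every cube $\sigma\ni v$ (because $G_\sigma$ permutes the finitely many vertices of $\sigma$). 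Finiteness of $G_v$--orbits of cells of $N(v)$ alone does not put each fibre over $N(v)$ within bounded distance of $\Psi^{-1}(v)$; this finite-index fact is what does.
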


\begin{proof}

    We recall in outline the main construction of \cite[Section 3]{GM23}.  We are given a group $G$ acting on a cube complex $X$, so that the non-empty chains of cubes in $X$ form a scwol $\mc{X}$.  The quotient of $\mc{X}$ by $G$ is a scwol $\mc{Y}$.  Note some of the objects of $\mc{Y}$ correspond to orbits of cubes (these are called \emph{cubical objects}), and others to orbits of chains of cubes.  
    At any rate we obtain a complex of groups $G(\mc{Y})$ whose fundamental group can be identified with $G$.  The local groups are stabilizers of certain orbit representatives of chains of cubes in $G$.  
    
    The complex of groups $G(\mathcal{Y})$ has an associated category $CG(\mathcal{Y})$ whose objects are those of $\mathcal{Y}$, and whose arrows are pairs $(g,a)$, where $a$ is an arrow of $\mathcal{Y}$, and $g$ is an element of $G_{t(a)}$.  The arrows whose second coordinate is an identity arrow are the elements of the local groups.  If the first coordinate is the identity element of $G_a$, we say the arrow is a \emph{scwol arrow}.  
    
    The graph $\Gamma$ is the $1$-skeleton of the realization of a certain subcollection of the arrows of the universal cover category $\widetilde{CG(\mc{Y})}$.  Each arrow of this universal cover has a \emph{label}, namely its image in $CG(\mc{Y})$.  

    Since the hyperplane stabilizers are assumed to be quasiconvex, \cite[Theorem A]{GM23} implies that the cell stabilizers are also quasiconvex.  In particular, they are finitely generated, so the quotient complex of groups $G(\mathcal{Y})$ has finitely generated local groups.

    For each cubical object $\tau$, we choose a finite generating set $\mathbb{A}_\tau$ for $G_\tau$.  We use the notation $\mathbb{A}$ to refer to all these choices at once.  The graph $\Gamma(\mathbb{A})$ is the realization of the set of arrows whose label is one of two types:
    \begin{enumerate}
        \item $(g,\mathbb{1}_\tau)$, where $g\in \mathbb{A}_\tau$; or
        \item $(1,a)$, where $a$ is an arrow of $\mc{Y}$ pointing from a chain $(\tau\subset \sigma)$ to either $\tau$ or $\sigma$.  The cube $\tau$ is required to be codimension exactly one in $\sigma$.
    \end{enumerate}
    A maximal subgraph of $\Gamma$ all of whose arrows are of the first type is isomorphic to the Cayley graph of some $G_\tau$ with respect to $\mathbb{A}_\tau$.  The arrows of the second type come in pairs (referred to in \cite{GM23} as \emph{pairs of opposable scwol arrows}) pointing out from a vertex of valence two.

    Next we define the map $\Psi$.  There is a functor $\Theta$ from $\widetilde{CG(\mc{Y})}$ onto $\mc{X}$ forgetting the first coordinate of each arrow $(g,a)$.  The arrows $(g,\mathbb{1}_\tau)$ are all sent to identity arrows at $\tau$; the arrows $(1,a)$ in pairs of opposable scwol arrows are sent to arrows of $\mc{X}$ whose realizations lie in the $1$--skeleton of $X_b$ (each is sent to a half-edge of that $1$--skeleton).  The map $\Psi$ is the realization of $\Theta$, restricted to the arrows of $\Gamma(\mathbb{A})$.  Since $\Theta$ is equivariant, and $\Gamma = \Gamma(\mathbb{A})$ is $G$--invariant, the map $\Psi$ is $G$--equivariant.

\begin{claim}
    For any connected subgraph $S$ in $X_b^{(1)}$, the preimage $\Psi^{-1}(S)$ is connected.
\end{claim}
 \begin{proof}
It suffices to show that if $e$ is any edge of $S$, then $\Psi^{-1}(e)$ is connected. 
    The edge $e$ connects vertices of $X_b$ corresponding to cubes $\sigma$ and $\tau$ of $X$, where $\tau$ is a codimension one face of $\sigma$.  The objects in $\Psi^{
    1}(e)$ all have label $(\overline\tau\subset\overline\sigma)$.  (Here $\overline\tau$ indicates the $G$--orbit of $\tau$, etc.)  Such an object is the source of a pair of opposable scwol arrows pointing at $\Theta^{-1}(\sigma)$ and $\Theta^{-1}(\tau)$.  In particular, $\Gamma(\mathbb{A})$ contains at least one pair of edges connecting $\Psi^{-1}(\sigma)$ to $\Psi^{-1}(\tau)$.  Since both $\Psi^{-1}(\sigma)$ and $\Psi^{-1}(\tau)$ are connected, so is $\Psi^{-1}(e)$.
 \end{proof}  
    The Claim immediately implies~\eqref{intersection} and~\eqref{lifting}.
    
    We now establish~\eqref{pairqi}.  Both $v$ and $N(v)\cap X_b^{(1)}$ are connected, so their preimages are also connected.  Since $\Psi$ is $G$-equivariant the preimages are $G_v$--invariant.  \cite[Lemma 3.12]{GM23} implies that $\Psi^{-1}\big(N(v)\big)$ is finite Hausdorff distance from $\Psi^{-1}(v)$.  The group $G$ acts geometrically on $\Gamma$; the subgroup $G_v$ acts geometrically on $\Psi^{-1}\big(N(v)\big)$.  Item~\eqref{pairqi} follows.
\end{proof}

The following lemma is true quite generally, whenever there is a cocompact action of a (not necessarily hyperbolic) group on a $\CAT(0)$ cube complex.

\begin{lemma}\label{lem: infinite stabilizer}
    Suppose the action of $G$ on $X$ is cocompact, and $H$ is a hyperplane such that $\Stab(H)=\Stab(D)$. Let $X_1$ and $X_2$ be the components of $X\setminus H$. Suppose there is some $u\in X_1^{(0)}$ with $G_u$ infinite.  Then there is an $R>0$ such that every vertex in $X_1$ is distance at most $R$ in $X_1^{(1)}$ from a vertex with infinite stabilizer.
\end{lemma}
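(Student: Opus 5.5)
Since $G$ acts cocompactly on $X$, there are only finitely many $G$--orbits of vertices.  Choose orbit representatives $v_1,\dots,v_k$.  For each $i$, the $G$--orbit of $v_i$ either consists entirely of vertices with infinite stabilizer or entirely of vertices with finite stabilizer, since stabilizers along an orbit are conjugate.  The plan is to combine this with the $\Stab(H)$--action on $X_1$ to propagate the vertex $u$ with infinite stabilizer throughout $X_1$ at bounded distance.

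First, the group $\Stab(H)$, and its finite-index subgroup $K$ preserving $X_1$ (Remark~\ref{rem:inversions}), acts cocompactly on $H$ and on the carrier of $H$.  This is the standard fact that a cocompact action on a $\CAT(0)$ cube complex restricts to a cocompact action of a hyperplane stabilizer on its hyperplane, since there are finitely many orbits of cubes meeting $H$.  Hence there is a constant $R_0$ such that every vertex of $X_1$ adjacent to $H$ lies within $R_0$ (in $X_1^{(1)}$) of $Ku_0$, for any fixed vertex $u_0$ adjacent to $H$ in $X_1$.

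Next, I would handle an arbitrary vertex $w\in X_1$.  By cocompactness, $w=gv_i$ for some $i$ and $g\in G$.  The key difficulty is that translates $gu$ of $u$ need not lie in $X_1$, so the distance $d(w,gu)$ is not useful directly; I need to find an infinite-stabilizer vertex inside $X_1$ near $w$.  The idea is to use the hyperplane $gH'$ structure: choose, for each orbit representative $v_i$, a combinatorial path in $X$ from $v_i$ to some vertex with infinite stabilizer, say a translate of $u$, and let $L$ be the maximum length of these finitely many paths.  Translating by $g$, each $w$ has a path of length at most $L$ to a vertex $w'$ with infinite stabilizer.  If this path stays in $X_1$, we are done with $R=L$.

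The main obstacle is when the path crosses $H$, so that $w'\in X_2$.  In that case $w$ is within $L$ of $H$, so within $L$ of a vertex adjacent to $H$ in $X_1$.  By the carrier cocompactness above, that vertex is within $R_0$ of $Ku_0$.  Choosing $u_0$ to be a vertex adjacent to $H$ in $X_1$, I would show there is a vertex with infinite stabilizer in $X_1$ at bounded distance $R_1$ from $u_0$.  To get this, I would take a geodesic from $u_0$ to $u$, which stays in $X_1$ by convexity of halfspaces, and let $R_1=d(u_0,u)$.  Translating by $K$, which preserves $X_1$, every $ku_0$ is within $R_1$ of $ku\in X_1$, which has infinite stabilizer.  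Then $R=\max\{L,\;L+R_0+R_1\}$ works.

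One subtlety to check is that geodesics in $X^{(1)}$ between vertices of $X_1$ remain in $X_1$, which holds because combinatorial halfspaces are convex.  Then all distances can be measured in $X_1^{(1)}$.  The other subtlety is the choice of $u_0$ adjacent to $H$ (such a vertex exists in $X_1$ since edges dual to $H$ exist).
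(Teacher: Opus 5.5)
Your proof is correct and is essentially the paper's argument: first use cocompactness of $G$ to find a translate of $u$ at bounded distance. When $H$ intervenes, pass to a nearby vertex of $\Carr(H)\cap X_1$ and translate $u$ by the side-preserving subgroup of $\Stab(H)$, which acts cocompactly on the carrier and keeps everything in $X_1$. The differences are cosmetic: the paper uses a compact connected fundamental subcomplex containing $u$ and closest-point projection to $\Carr(H)$, whereas you use vertex-orbit representatives joined to $u$ by paths and a fixed base vertex $u_0$ of the carrier.
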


\begin{proof}
    By cocompactness of the action of $G$ there is a compact connected subcomplex $K$ containing $u$ such that $GK=X$. Let $A=\diam(K)$ and $w\in X_1^{(0)}$.  Fix $g$ so that $w\in gK$.  If $gK$ is disjoint from $H$, then $gu$ is on the same side of $H$ as $w$ is, and $d(gu,w)\le A$.

    Suppose on the other hand that $gK$ meets $H$.  Let $\Carr(H)$ be the closed carrier of $H$.  Since $\Carr(H)$ is combinatorially convex, the closest point projection $\pi_H\from X^{(0)}\to \Carr(H)^{(0)}$ is well-defined.  
    Let $\overline{w} = \pi_H(w)$, and $\overline{u} = \pi_H(u)$.  
    Since $gK$ meets $\Carr(H)$, we have $d(w,\overline{w})\le A$.
    Let $C = d(u,\overline{u})$.  
    
    Let $G_H$ be the subgroup of $\Stab(H)$ which preserves the halfspaces $X_1, X_2$.  This subgroup is index at most $2$, and $\Stab(H)$ acts cocompactly on $H$, so $G_H$ acts cocompactly on $\Carr(H)$.  In particular there is a compact connected subcomplex $L\subset \Carr(H)\cap X_1$ so that $G_HL = \Carr(H)\cap X_1$.  Let $B$ be the diameter of $L$.  There is some $g'\in G_H$ so that $\overline{w} \in g' L$.  
    Now the distance from $w$ to $g'u$ is at most  $A + B + C$.  
\end{proof}

\begin{lemma}\label{lem:approximatesigma}
    Suppose $G$ is hyperbolic, $M=\partial M$ and $G$ acts cocompactly on $X(\mc{D})$ where $\mc{D}$ is a set of divisions satisfying Assumption~\ref{as:main}.
    
    Let $v$ be a vertex of $X(\mc{D})$.
    Let $\sigma$ be a quasi-geodesic ray in $\Gamma$ (the graph from Theorem~\ref{thm: HGAI result}), and suppose that, for some hyperplane $H$, and for some sequence of times $t_i$ going to infinity,
    $\Psi\big(\sigma(t_i)\big)$ is a vertex lying on the same side of $H$ as $v$ does.  
    
    Then $[\sigma]\in \overline{h_v(D)}$, where $D$ is the division corresponding to $H$.
\end{lemma}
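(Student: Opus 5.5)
We argue by contradiction: suppose $[\sigma]\notin\overline{h_v(D)}$, so $[\sigma]\in h$, where $h$ is the other open side of $D$ and $h$ is disjoint from $\overline{h_v(D)}=h_v(D)\cup C_D$. Identify $\partial\Gamma$ with $M=\partial G$ via the quasi-isometry $G\to\Gamma$. The aim is to show that, for $t$ large, every point $\sigma(t)$ is sent by $\Psi$ into the side of $H$ opposite $v$, contradicting the hypothesis about the times $t_i$.

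The first step is to produce a subset of $\Gamma$ that coarsely separates the two sides of $H$. Let $Q=\Psi^{-1}(H)$, interpreting $H$ as the union of the edge midpoints of $X_b^{(1)}$ dual to $H$. By Theorem~\ref{thm: HGAI result}\eqref{intersection} this set is connected. It is $\Stab(H)=\Stab(D)$--invariant, and it is acted on cocompactly: $G$ acts on $\Gamma$ geometrically and $\Psi$ is equivariant, with only finitely many orbits of edges mapping across $H$. Hence $Q$ is quasi-isometric to $\Stab(D)$, which is quasiconvex by Proposition~\ref{prop:qc iff null}. Its limit set in $\partial\Gamma$ is therefore $\Ld{\Stab(D)}$. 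Since $\Stab(D)$ is finite index in $\Stab(C_D)$, this limit set is contained in $C_D$; more precisely it equals $C_D$, by the argument of Proposition~\ref{prop:qc iff null}. The complement $\Gamma\setminus Q$ splits as $\Psi^{-1}(X_1)\sqcup\Psi^{-1}(X_2)$, because $\Psi$ is continuous and any path from $X_1$ to $X_2$ in $X_b^{(1)}$ crosses $H$.

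The second step identifies the ends of these two pieces. Take a triple in $M_\mc{D}$ lying in $h$, and let $\eta$ be its principal ultrafilter. A geodesic ray in $\Gamma$ converging to a point of $h$ eventually leaves every bounded neighborhood of $Q$, since its endpoint is not in $\Lambda(Q)=C_D$. We want it to lie eventually in $\Psi^{-1}$ of the side of $H$ labelled by $h$. To see this, compare with points $g\eta$ for $g\in G$ near the ray. Roughly, orbit points $g x_0$ converging to a point of $h$ correspond to translated triples that eventually lie in $h$, and these choose the $h$ side of $D$. Lemma~\ref{lem: infinite stabilizer} together with cocompactness gives a coarse comparison between $\Psi(\gamma)$ and the principal vertices $\Psi(g\cdot)$ via a bounded-distance relation. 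The conclusion is that any sequence in $\Gamma$ converging to a point of $h$ eventually maps into the $h$ side, and symmetrically for $h_v(D)$.

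The third step finishes the argument. Since $[\sigma]\in h$, which is open and disjoint from $C_D=\Lambda(Q)$, we get $d(\sigma(t),Q)\to\infty$. By the second step, $\Psi(\sigma(t))$ then lies on the $h$ side for all large $t$, contradicting the hypothesis about the times $t_i$. The main obstacle is the second step: relating ``converging to a point of $h$ in $\partial\Gamma$'' to ``mapping by $\Psi$ to the correct side of $H$''. I would attempt it first by using the connectedness of $\Psi^{-1}(X_i)$ (Theorem~\ref{thm: HGAI result}) and a path-lifting argument with Theorem~\ref{thm: HGAI result}\eqref{lifting}, to show that each side has limit set contained in the closure of the corresponding $M_D^\pm$.
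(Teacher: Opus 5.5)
\paragraph{The gap is in Step~2.}
Your Step~2 is not a technical sub-step. It is the lemma itself, stated for sequences rather than rays, and once it is known Steps~1 and~3 are unnecessary. Nothing set up in Step~1 supplies it. Quasiconvexity of $Q=\Psi^{-1}(H)$ and $\Lambda(Q)=C_D$ only show that a ray ending in $M\setminus C_D$ eventually stays in one of $\Psi^{-1}(X_1)$ or $\Psi^{-1}(X_2)$. They do not tell you which piece has its endpoints in $M_D^+$ and which in $M_D^-$, and that matching is exactly what must be proved.

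Your fallback plan (connectedness of $\Psi^{-1}(X_i)$ plus path lifting) never refers to the divisions, so it cannot produce the matching either. Your comparison with principal vertices starts correctly: if $g_ix_0\to\xi\in h$, convergence dynamics put two points of $g_it$ in the open set $h$ for large $i$, so $g_i\eta$ lies on the side of $H$ opposite $v$. But a bounded distance in $X$ between $\Psi(\sigma(t_i))$ and $g_i\eta$ does not determine which side of $H$ the point $\Psi(\sigma(t_i))$ is on. The map $\Psi$ is not a quasi-isometry ($X$ is not locally finite and cell stabilizers are infinite), so points of $\Gamma$ arbitrarily far from $Q$ can map within bounded distance of $H$. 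Lemma~\ref{lem: infinite stabilizer} does not address this.

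\paragraph{The missing idea.}
You need to rule out that $H$ is one of the boundedly many hyperplanes separating $\Psi(\sigma(t_i))$ from $g_i\eta$. In your framework this can be done as follows.
\begin{enumerate}
\item Write $\sigma(t_i)=g_iy_i$ with $y_i$ in a finite subgraph $F$ satisfying $GF=\Gamma$.
\item The hyperplanes separating a point of $\Psi(F)$ from $\eta$ form a finite set $S$. Hence every hyperplane separating $\Psi(\sigma(t_i))$ from $g_i\eta$ lies in $g_iS$.
\item Suppose $H\in g_iS$ for infinitely many $i$. After passing to a subsequence, $g_i^{-1}H$ is constant, so $g_i\in\Stab(H)g_1$.
\item Then $g_ix_0$ stays within bounded distance of $\Stab(H)x_0$. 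This forces $\xi\in\Ld{\Stab(D)}\subseteq C_D$, contradicting $\xi\in h$.
\end{enumerate}
So for large $i$, $\Psi(\sigma(t_i))$ lies on the same side of $H$ as $g_i\eta$, which contradicts the hypothesis.

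The paper closes the gap differently and does not use principal vertices. It uses Lemma~\ref{lem: infinite stabilizer} and cocompactness to find vertices $w_i$ on the $v$-side, near $\Psi(\sigma(t_i))$, with infinite stabilizers. It then approximates $[\sigma]$ by points of $\Ld{G_{w_i}}$, using quasiconvexity of $\Psi^{-1}(w_i)$. Finally it concludes from Lemma~\ref{lem:containrevised}, which gives $\Ld{G_{w_i}}\subseteq\overline{h_v(D)}$, together with closedness of $\overline{h_v(D)}$.
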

\begin{proof}
    First, suppose a vertex $w$ on the same side of $H$ as $v$ is is visited infinitely often by $\Psi\big(\sigma(t_i)\big)$.  In this case $[\sigma]\in \Ld{G_w}$.  This limit set is contained in $\overline{h_w(D)}$, by Lemma~\ref{lem:containrevised}.  But since $v,w$ are on the same side of $H$, $h_v(D) = h_w(D)$, and so we are done.

     Possibly refining the sequence, we can now assume all the vertices $\{v_i = \Psi\big(\sigma(t_i)\big)\}$ are distinct.
     By Lemma~\ref{lem: infinite stabilizer} and Proposition~\ref{prop: hyp cocompact} we may choose nearby vertices $\{w_i\}$ on the same side of $H$ as $v$ so that the stabilizer of each $w_i$ is infinite.  (The fact that $w_i$ can be chosen on either side of $H$ follows from the fact that $H$ is an essential hyperplane which is skewered by the axis of a hyperbolic isometry of $X(\mc{D})$.  This follows from Proposition~\ref{prop:noglobalfp} together with the fact that there is some loxodromic for the action on $M$ with attracting fixed point in $M_D^+$ and repelling fixed point in $M_D^-$; see \cite[Theorem 2R]{Tuk94}.)

     We may also assume that the $w_i$ are chosen so that $w_i$ is a closest such vertex to $v_i$.  We claim $[\sigma]$ can be approximated by points $p_i\in \Ld{G_{w_i}}$.

    Lemma~\ref{lem: infinite stabilizer} implies that for all $i$ there is a bound on the number of vertices a path $\alpha_i$ from $v_i$ to $w_i$ in $X^{(1)}$ must pass through. By the choice of $w_i$, any vertex $u_i\neq w_i$ on $\alpha_i$ must have finite stabilizer. So, $\Psi^{-1}(u_i)$ is bounded. Thus, there exists a uniform bound $R$ such that $d\big(\Psi^{-1}(v_i),\Psi^{-1}(w_i)\big)< R$ for all $i$.

      We now choose the points $p_i$. Theorem A of \cite{GM23} and Theorem~\ref{thm: HGAI result} give that $\Psi^{-1}(w_i)$ is quasiconvex. Let $\beta_i=(\beta^-,\beta^+)$ be a bi-infinite geodesic in $\Psi^{-1}(w_i)$. Up to the action of $G_{w_i}$, we may assume that that $d\big(\beta_i(s_i),\sigma(t_i)\big)<K+R$ for some $K$ and $s_i$. Notice that since there only finitely many orbits of $w_i$ and vertices $v_i=\sigma(t_i)$, we may assume that $K$ is uniform across all $i$. This implies that one of the rays $\rho^-=\big[\sigma(0),\beta^-\big)$ or $\rho^+=\big[\sigma(0),\beta^+\big)$ pass near $\sigma(t_i)$. Let $\rho_i$ be this ray, and set $p_i=[\rho_i]$.  Then the Gromov product $(p_i|[\sigma])\rightarrow \infty$ as $i\rightarrow\infty$. Thus the distance in the visual metric $d_V(p_i,[\sigma])$ goes to zero. (We refer the reader to  \cite{BH1} Chapter III.H for details about the Gromov inner product and the visual metric.)

    Since each $\Ld{G_{w_i}}\subset \overline{h_{w_i}(D)} = \overline{h_v(D)}$,  we have that a subsequence of $p_i$ converges to in $\overline{h_v(D)}$. Therefore, $[\sigma]\in \overline{h_v(D)}$.
\end{proof}

We are ready to prove the main result of this section. 

\begin{proof}[Proof of Theorem~\ref{thm: intersection of halfspaces}] Given Lemma~\ref{lem:containrevised} we need only show the following containment to obtain the result \[\Ld{G_v}\supseteq \bigcap_{D\in\mc{D}} h_v(D)\cup C_D.\]

    To prove this we use Theorem~\ref{thm: HGAI result}. Our strategy is to show that a geodesic ray in $\Gamma$ representing a point in $\bigcap_{D\in\mc{D}} h_v(D)\cup C_D$ can be replaced with a quasigeodesic ray whose projection stays in $N(v)$.

    Let $\eta\from [0,\infty)\rightarrow \Gamma$ be a geodesic ray with $[\eta] \in \bigcap_{D\in\mc{D}} h_v(D)\cup C_D.$  Define $\mc{L}_{\eta}$ be the set of hyperplanes $H$ such that the projection $\overline{\eta}=\Psi(\eta)$ crosses $H$ and some tail of $\overline{\eta}$ is separated from $v$ by $H$.

    Let $H_0$ be some hyperplane in $\mc{L}_{\eta}$, and let $D_0$ be the corresponding division.
    Lemma~\ref{lem:approximatesigma} implies that $[\eta]\in M\minus h_v(D_0)$. Since $[\eta]\in h_v(D_0)\cup C_{D_0}$, $[\eta]$ cannot be in $M\minus\overline{h_v(D_0)}$. So, we have that $[\eta]\in C_{D_0}$ by Definition~\ref{def: division}(3). 
    This implies we can replace $\eta$ by  a quasigeodesic ray $\eta_0$ with a tail which projects to $H_0 = I_0$.  
    If the new quasigeodesic ray $\eta_0$ returns infinitely often to the cubical $1$-neighborhood of $v$ in $X_b^{(1)}$, we are done.

 We proceed inductively. Suppose there exist hyperplanes $H_0,\ldots,H_{k-1}$ meeting $N(v)$ and with intersection $I_{k-1}$ so that $\eta_{k-1}$ is a quasigeodesic ray with tail projecting to $I_{k-1}$ and $[\eta_{k-1}] = [\eta]$ in $M$. Now assume there is a hyperplane $H_k$ meeting $N(v)$ such that the tail of $\overline{\eta}_{k-1}=\Psi(\eta_{k-1})$ lies in the side of $H_k$ opposite $v$. 

    We now use Lemma~\ref{lem:approximatesigma} and the connectedness of $\Psi^{-1}(I_k)$ given by Theorem~\ref{thm: HGAI result}. The point $[\eta_{k-1}]=[\eta]$ is in $M\setminus{h_v(D_k)}$ and $\overline{h_v(D_k)}$, and thus must be in $C_{D_k}$. We may choose $\eta_k$ such that the image under $\Psi$ of some tail lies in $I_k=I_{k-1}\cap H_k$.

    As discussed in the proof of Proposition~\ref{prop: hyp cocompact} $X$ is finite dimensional. If $K$ is the dimension of $X$, then for all $n>K$ the intersection $I_n$ is empty. Thus $\overline{\eta}_k$ must be contained in $N(v)$ for some $k\leq K$.

\end{proof}

\subsection{The relatively hyperbolic case}\label{ss:rhc}
We now move to the more general setting of a relatively hyperbolic pair $(G,\mc{P})$ and $M = \partial (G,\mc{P})$.
In this subsection we prove Proposition~\ref{prop:FiniteInt} which we will use in the next section on canonical divisions, but which applies more generally. First we need a lemma. 

\begin{lemma}\label{lem: end points loxodromic}
    Let $(G,\mc{P})$ be relatively hyperbolic and $M=\bndry(G,\mc{P})$. Let $\mc{D}$ be a system of divisions which satisfies Assumptions~\ref{as:main}. Let $g$ be a loxodromic element of $G$. Then $\#\big(\Lambda(\langle g\rangle)\cap C_D\big)\in\{0,2\}$ for any cut set $C_D$.
\end{lemma}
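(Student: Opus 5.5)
The strategy is to rule out the case of exactly one fixed point. Write $\Lambda(\langle g\rangle)=\{p,q\}$ with $p\neq q$, where $q$ is attracting and $p$ is repelling for the sequence $g^n$, $n\to\infty$. Suppose exactly one of them lies in $C=C_D$. Since the roles of $p,q$ are exchanged by replacing $g$ with $g^{-1}$, we may assume $p\in C$ and $q\notin C$.

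First, the translates $g^nC$ for $n\ge 0$ cannot all coincide. Indeed, if $g^kC=C$ for some $k>0$, then $C$ is closed and $g^k$-invariant. It contains a point $x\neq p$, because cut sets contain at least two points; this holds since $M$ has no path cut point and $M$ is path connected, so a single point cannot divide $M$. (A one-point $C$ would have $M\setminus C$ disconnected with path-connected $M$, making that point a path cut point, as paths between the two sides must meet $C$.) Then $g^{kn}x\to q$, so $q\in C$, a contradiction. Hence $g^nC\neq g^mC$ for $n\neq m$, after possibly passing to the translates by multiples of some fixed power. More precisely, the stabilizer of $C$ in $\langle g\rangle$ is trivial, so all $g^nC$, $n\ge0$, are distinct.

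Second, I will show that the limit of the sequence $\{g^nC\}_{n\ge 0}$ contains two points, contradicting point-convergence of $\{C_D\}$. The point $p$ lies in every $g^nC$, since $g^np=p$, so $p\in\lim g^nC$. Also choose $x\in C\setminus\{p\}$. By the convergence property, $g^nx\to q$, so $q\in\lim g^nC$. As $p\neq q$, we get $\#\lim g^nC\ge 2$, contradicting point-convergence of $\mc{D}$ (Assumption~\ref{as:main}). This is essentially the same argument as in the proof of Lemma~\ref{lem:containrevised}.

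The main obstacle, minor here, is justifying that $C$ has a point other than $p$ and that distinctness of translates holds. Distinctness follows from the first step. For $\#C\ge 2$, I will argue as above: if $C=\{p\}$, pick $a\in M_D^+$ and $b\in M_D^-$. Any path from $a$ to $b$ has connected image, and since $M_D^\pm$ are clopen in $M\setminus C$, the path must meet $C=\{p\}$. So $p$ would be a path cut point, which is excluded by Assumption~\ref{as:main}.
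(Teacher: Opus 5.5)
Your proof is correct and follows the paper's argument. Both assume exactly one fixed point of $g$ lies in $C_D$ and observe that the translates $g^nC_D$ accumulate on both fixed points, which contradicts point-convergence; the paper phrases this through nullity, i.e.\ $\diam(g^nC_D)$ stays bounded below. You also justify two details the paper leaves implicit: that $C_D$ contains a point other than the fixed point (via the no-path-cut-point hypothesis), and that the translates $g^nC_D$ are pairwise distinct.
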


Before we prove the lemma we remind the reader that we are in a setting where a point-convergent collection is equivalent to being a null family. (See Remark~\ref{rem: ptconv iff null} and Definition~\ref{def: null family} for more details.)

\begin{proof}
    Let $\Lambda(\langle g \rangle )=\{a=g^{+\infty},b=g^{-\infty}\}$, and let $C_D$ be the cut set corresponding to some division $D\in\mc{D}$. Suppose for a contradiction that $b\in C_D$ and $a\notin C_D$.  Fix $\epsilon >0$. Since $\langle g\rangle$ fixes $b$ and $b\in C_D$, we have that there is a $k=k(\epsilon)$ such the $\diam(g^n C_D)>\epsilon$ for all $n>k$. This violates the nullity of the collection $\{C_D\mid D\in\mathcal{D}\}$.
\end{proof}

We will need the following result of Yang, which generalizes a theorem of Susskind--Swarup in the setting of Kleinian groups \cite{SusskindSwarup}. 
\begin{theorem}\label{thm:yang12}\cite[Theorem 1.1 and Proposition 3.3]{Yang12} 
    Let $H, K$ be two relatively quasiconvex subgroups of a relatively hyperbolic
group $G$. Then
$\Ld H \cap \Ld K = \Ld{H \cap K}\sqcup E$
where the exceptional set E consists of parabolic fixed points of $\Ld{H}$ and $\Ld{K}$, whose
stabilizers in $H$ and $K$ have finite intersection.  The points of $E$ are isolated in $\Ld{H}\cap\Ld{K}$.  

\end{theorem}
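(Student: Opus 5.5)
The inclusion $\Ld{H\cap K}\subseteq \Ld H\cap\Ld K$ is immediate, so I would prove the reverse decomposition by working in a cusped space $X$ for $(G,\mc{P})$. I would fix a basepoint $1$, use the geometric characterization of relative quasiconvexity from \cite[QC-3]{Hruska10} (points of $\Jn(\Ld H)$ in the thick part lie within $\mu$ of $H$, and likewise for $K$), and use the dynamical characterization from Definition~\ref{def: dynamic rel qc}. Since $G$ acts geometrically finitely, a point $x\in\Ld H\cap\Ld K$ is either conical for $G$ or bounded parabolic, and I would treat these two cases separately.

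\emph{Conical case.} Let $\gamma$ be a geodesic ray from $1$ to $x$. Since $x$ is conical, $\gamma$ returns infinitely often to a bounded neighborhood of the thick part, at group elements $g_i\to x$. Quasiconvexity of $H$ and $K$ gives $h_i\in H$ and $k_i\in K$ with $d(g_i,h_i)\le\mu'$ and $d(g_i,k_i)\le\mu'$. Then $h_i^{-1}k_i$ lies in a finite ball, so by pigeonhole there is an infinite subsequence on which $h_i^{-1}k_i=c$ is constant. For such $i,j$ we get $h_jh_i^{-1}=k_jk_i^{-1}\in H\cap K$. Fixing $i$ and letting $j\to\infty$, the elements $h_jh_i^{-1}$ are elements of $H\cap K$ that stay boundedly close to $g_j$, hence they converge to $x$. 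Therefore $x\in\Ld{H\cap K}$, and in fact $x$ is conical for $H\cap K$.

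\emph{Parabolic case.} Let $p$ be a parabolic point, and set $P_H=H\cap\Stab(p)$ and $P_K=K\cap\Stab(p)$. Both are infinite, because $p$ is bounded parabolic for the geometrically finite actions of $H$ on $\Ld H$ and of $K$ on $\Ld K$. If $P_H\cap P_K$ is infinite, then $p\in\Ld{H\cap K}$. If $P_H\cap P_K$ is finite, then $p\in E$, and I must also show $p\notin\Ld{H\cap K}$. For this I would use that $H\cap K$ is relatively quasiconvex (Hruska), so any point of $\Ld{H\cap K}$ that is parabolic is bounded parabolic for $H\cap K$ and therefore has infinite stabilizer inside $P_H\cap P_K$. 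This gives the disjointness of the union.

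\emph{Isolation, the main obstacle.} Suppose $p\in E$ and $x_n\in\Ld H\cap\Ld K$ with $x_n\to p$ and $x_n\neq p$. Since $P_H$ acts cocompactly on $\Ld H\setminus\{p\}$, there are $a_n\in P_H$ with $a_n^{-1}x_n$ in a fixed compact set $F_H$. Likewise there are $b_n\in P_K$ with $b_n^{-1}x_n\in F_K$. Geometrically, the geodesics $[p,x_n]$ exit the horoball at $p$ near $a_n$ and also near $b_n$, up to a uniform error. Hence $a_n^{-1}b_n$ lies in a bounded subset of $\Stab(p)$, so it takes finitely many values. Passing to a subsequence on which it is constant, $a_ma_n^{-1}=b_mb_n^{-1}\in P_H\cap P_K$. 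Since $x_n\to p$ forces $a_n\to\infty$ in $P_H$, these elements are infinitely many, contradicting finiteness of $P_H\cap P_K$.

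The delicate point is making the ``exit point of the horoball'' comparison uniform. I would handle it using the properness and cocompactness of the $\Stab(p)$--action on $M\setminus\{p\}$, together with thinness of ideal triangles with vertex $p$.
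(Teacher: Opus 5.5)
The paper gives no proof of this statement. It is imported from Yang (\cite[Theorem 1.1 and Proposition 3.3]{Yang12}) and used as a black box in Lemma~\ref{lem:extendRHstructure} and Proposition~\ref{prop:FiniteInt}, so there is no internal argument to match. Your proposal is a self-contained reconstruction, and as far as I can check it is correct.

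It rests on three ingredients: the pigeonhole on $h_i^{-1}k_i$ in the conical case, Hruska's theorem that $H\cap K$ is relatively quasiconvex (used for disjointness), and a properness argument at parabolic points (used for isolation). Compared with the citation, it gives slightly more: conical points of $\Lambda H\cap\Lambda K$ are conical for $H\cap K$. The cost is reliance on Hruska's equivalent definitions of relative quasiconvexity and on his intersection theorem. These are independent of Yang's work, so there is no circularity.

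Three places should be tightened.

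\begin{enumerate}
\item The ``delicate point'' in the isolation step needs no horoball geometry. From $a_n^{-1}x_n\in F_H$ and $b_n^{-1}x_n\in F_K$ you get $(b_n^{-1}a_n)F_H\cap F_K\neq\emptyset$. Since $\Stab(p)$ acts properly on $M\setminus\{p\}$ (part of the definition of a bounded parabolic point) and $F_H,F_K$ are compact subsets of $M\setminus\{p\}$, only finitely many elements of $\Stab(p)$ can do this. Your observation that the $a_n$ must leave every finite set then finishes the argument: otherwise infinitely many $x_n$ would lie in a single compact set $aF_H\subset M\setminus\{p\}$.

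\item In the conical case, QC-3 as Hruska states it controls geodesics with endpoints in the $H$--orbit, not rays from $1$ to points of $\Lambda H$. To get $h_i$ within $\mu'$ of the thick points $g_i$, first approximate $[1,x)$ by geodesics $[1,h_n]$ with $h_n\to x$. A point of $[1,h_n]$ near $g_i$ then has bounded depth. You need that such a geodesic, whose endpoints have depth zero, has a depth-zero point nearby; this is the same horoball argument used in the paper's proof of Proposition~\ref{prop:qc iff null}. Only then does QC-3 apply.

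\item In the parabolic case, say explicitly why $P_H$ is infinite. The point $p$ is parabolic for $G$, so it cannot be conical for $G$ (Tukia), hence it is not conical for $H$. Geometric finiteness of the $H$--action on $\Lambda H$ then forces $p$ to be bounded parabolic for $H$.
\end{enumerate}
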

Now we prove the main result of this subsection.
\begin{proposition}
\label{prop:FiniteInt}
  Let $(G,\mc{P})$ be relatively hyperbolic, and $M = \partial(G,\mc{P})$.  Let $\mc{D}$ be a system of divisions satisfying Assumption~\ref{as:main}, and suppose further that
  \begin{enumerate}
      \item For each $D\in \mc{D}$, the stabilizer of $D$ is full relatively quasiconvex.
      \item The action of $G$ on $X$ is cocompact.
  \end{enumerate}
Then for any vertex $v$ of $X^{(0)}$, either $\Ld{G_v}$ contains an off-the-wall point or $\Ld{G_v}\subseteq C_D$ for some $D\in \mc{D}$.  Moreover, $\#\{D\mid \Ld{G_v}\subseteq C_D\}<\infty$.
\end{proposition}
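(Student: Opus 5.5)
The plan is to assume $\Ld{G_v}$ contains no off-the-wall point, use a Baire category argument to find a single cut set $C_D$ containing a relatively open piece of $\Ld{G_v}$, and then use loxodromic dynamics together with nullity to upgrade this to $\Ld{G_v}\subseteq C_D$. Finiteness is handled separately by nullity.

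First I reduce to the case that $G_v$ is infinite, so that $\Ld{G_v}\neq\emptyset$. If $G_v$ is finite, the limit set is empty and the finiteness clause cannot hold as literally stated, so I would treat that case as excluded or vacuous. Suppose then that $\Ld{G_v}$ contains no off-the-wall point, i.e.
\[\Ld{G_v}\subseteq \bigcup_{D\in\mc{D}} C_D.\]
The set $\Ld{G_v}$ is compact metric, hence Baire, and $\mc{D}$ is countable because $G$ is countable and $\mc{D}$ is $G$--finite. So some $C_D\cap\Ld{G_v}$ contains a nonempty relatively open subset $U$ of $\Ld{G_v}$. If $\Ld{G_v}$ is a single point we are already done, so assume it has at least two points. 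Then $G_v$ is non-elementary, or at least contains a loxodromic, and $\Ld{G_v}$ is perfect in the non-elementary case. I also plan to record that $G_v$ is relatively quasiconvex. This should follow from cocompactness and full relative quasiconvexity of the hyperplane stabilizers, via the relatively hyperbolic version of \cite[Theorem A]{GM23}. That fact is used for the finiteness clause, and possibly to invoke Theorem~\ref{thm:yang12} if needed.

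To show $\Ld{G_v}\subseteq C_D$, suppose not. By density of pairs of loxodromic fixed points in $\Ld{G_v}\times\Ld{G_v}$ \cite{Tuk94}, choose a loxodromic $h\in G_v$ whose attracting point $a$ lies in $U$ and whose repelling point $b$ lies in the open set $\Ld{G_v}\setminus C_D$. Fix a small compact neighbourhood $V\subset U$ of $a$ in $\Ld{G_v}$ that does not exhaust $\Ld{G_v}$. Under $h^{-n}$, the set $V$ spreads out to cover everything away from a shrinking neighbourhood of $b$. Hence $\diam(h^{-n}C_D)\ge \diam(h^{-n}V)$ is bounded below. By nullity (Remark~\ref{rem: ptconv iff null}), the sets $h^{-n}C_D$ take only finitely many values, so $h^m C_D=C_D$ for some $m>0$. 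Taking any $u\in V\setminus\{a\}$, the points $h^{-nm}u\in C_D$ converge to $b$. Since $C_D$ is closed, $b\in C_D$, which is a contradiction. (Alternatively, this is exactly Lemma~\ref{lem: end points loxodromic} applied to $h^m$.) Thus $\Ld{G_v}\subseteq C_D$.

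For the finiteness clause when $\#\Ld{G_v}\ge 2$: every $C_D$ containing $\Ld{G_v}$ has diameter at least $\diam\Ld{G_v}>0$. Nullity gives finitely many such cut sets, and point-convergence gives finitely many $D$ per cut set. The main obstacle is the case $\Ld{G_v}=\{p\}$ with $p$ parabolic, where nullity alone gives nothing. There I would use that $p$ is bounded parabolic, with $P=\Stab(p)$ acting cocompactly on $M\setminus\{p\}$. Fix a compact fundamental domain $F$. Each cut set through $p$ has a $P$--translate meeting $F$, hence of diameter bounded below, so there are only finitely many $P$--orbits of cut sets through $p$. It then remains to show that only finitely many members of each such $P$--orbit are compatible with $v$. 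For this I would use that an infinite $P$--orbit of walls through $p$ would give, via Lemma~\ref{lem:containrevised} and the DCC, a point separated from $v$ by infinitely many walls, or else contradict cocompactness of $G\acts X$ (finitely many hyperplanes adjacent to the finite-index-in-$P$ subgroup $G_v$ up to $G_v$). I expect this last step to need the most care.
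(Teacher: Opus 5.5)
Your proof of the dichotomy is correct and takes a different, more elementary route than the paper. You apply Baire directly to the countable closed cover $\{C_D\cap\Ld{G_v}\}_{D\in\mc{D}}$ to get a relatively open $U\subseteq C_D$. A loxodromic $h\in G_v$ with one fixed point in $U$ and the other in $\Ld{G_v}\setminus C_D$ then contradicts Lemma~\ref{lem: end points loxodromic}. This treats elementary and non-elementary $G_v$ uniformly, and needs neither Theorem~\ref{thm:yang12}, nor the sets $L_e$ and the dichotomy of Claim~\ref{claim:fi2}, nor relative quasiconvexity of $G_v$. Your finiteness argument when $\#\Ld{G_v}\ge 2$ is also complete: every such $C_D$ has diameter at least $\diam\Ld{G_v}$, so nullity applies, and each cut set carries only finitely many divisions. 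It is much lighter than the paper's passage through \cite{Hruska10}, \cite{Tran19} and \cite{GM23}.

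The genuine gap is the remaining case $\Ld{G_v}=\{p\}$, with $p$ a bounded parabolic point whose stabilizer is $P\ge G_v$, and your sketch there points the wrong way.
\begin{itemize}
\item The set $\{D\mid \Ld{G_v}\subseteq C_D\}=\{D\mid p\in C_D\}$ is $P$--invariant and depends on $v$ only through $p$. No notion of being ``compatible with $v$'' can cut it down: if one division through $p$ has an infinite $P$--orbit, the count is infinite whatever $v$ is.
\item Nothing makes $G_v$ finite index in $P$.
\item You correctly show there are only finitely many $P$--orbits of cut sets through $p$. What remains is to show that $\Stab(D)\cap P$ has finite index in $P$ whenever $p\in C_D$.
\end{itemize}
This is exactly where hypothesis (1), fullness, must enter, and your proposal never uses it. If $p\in\Ld{\Stab(D)}$, then $p$ is not conical for $\Stab(D)$, since conical points of a subgroup are conical for $G$ and $p$ is parabolic. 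Relative quasiconvexity then makes $p$ a parabolic point of $\Stab(D)$, so $\Stab(D)\cap P$ is infinite, and fullness forces it to have finite index.

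You still need $p\in\Ld{\Stab(D)}$, not merely $p\in C_D$. By the argument of Proposition~\ref{prop:qc iff null} this holds unless $p$ is an isolated point of $C_D$; the paper takes it for granted in Claim~\ref{claim:fi1}.

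The paper's own mechanism is different. It uses that $G_v\cap\Stab(D)$ is a finite-index subgroup of $G_v$ fixing the edge $e(D)$ nearest $v$ dual to $H_D$, and bounds such fixed edges via strong quasiconvexity and \cite[Corollary B]{GM23}.
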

\begin{proof}

    We suppose that $\Ld{G_v}$ is non-empty but contains no off-the-wall point.

    Suppose $\Ld{G_v}\cap C_D\neq \emptyset$ for some $D\in\D$, and let $H_D$ be the hyperplane corresponding to the division $D$. Let $\gamma$ be a combinatorial path in $X^{(1)}$ of minimal length which starts at $v$ and crosses $H_D$. Any such path crosses $H_D$ in a unique edge. Call this edge $e$. Notice that $G_v\cap \Stab(H_D)\subset \Stab(e)$. 

    Now, let $\D_v =\set{D}{\Ld{G_v}\cap C_D\neq \emptyset}$,
    and $\mathcal{E}=\mathcal{E}(\D_v)$ be the collection of edges obtained as in the previous paragraph. Consider the sets $L_e=\Ld{G_v}\cap\Ld{G_e}$.  
    Theorem~\ref{thm:yang12} implies that $L_e$ is the union of $\Ld{G_v\cap G_e}$ with a set of isolated parabolic points $E(e)$.
\begin{claim}\label{claim:fi1}
    \begin{equation*}\Ld{G_v} \subseteq \bigcup_{e\in\mathcal{E} }L_e.\end{equation*}
    \end{claim}
    \begin{proof}[Proof of Claim~\ref{claim:fi1}]
  We are assuming that $\Ld{G_v}$ contains no off-the-wall point, so it is contained in the union of the sets $C_D$.  For a particular such set $D$, let $e = e(D)$ be the edge described above.  Then $\Ld{G_v}\cap C_D\subseteq \Ld{G_v\cap \Stab{D}}\subseteq \Ld{G_e}$, so in particular
  $\Ld{G_v}\cap C_D\subseteq L_e$.  The claim is established.
\end{proof}
  \begin{claim}\label{claim:fi2}
      Let $e\in \mc{E}$, and suppose $G_v$ is non-elementary.  Either $\Ld{G_v} = L_e$ or $L_e$ is nowhere dense in $\Ld{G_v}$.
  \end{claim}
  \begin{proof}[Proof of Claim~\ref{claim:fi2}]
      Recall from Yang's theorem \ref{thm:yang12} 
      that $L_e = \Ld{G_e\cap G_v}\sqcup E(e)$, where $E(e)$ is a set of 
      parabolic points, isolated in $L_e$, whose stabilizers in each of $G_e$ and $G_v$ are infinite.
      
      Let $H = G_e\cap G_v$.  Both $G_e$ and $G_v$ are relatively quasiconvex in $G$, so $H$ is relatively quasiconvex in $G_v$.  If $H$ is finite index in $G_v$, then $\Ld{H} = \Ld{G_v}$; moreover a parabolic point $p$ has infinite stabilizer in $G_v$ if and only if it has infinite stabilizer in $H$.  In particular this means that $E(e)$ is empty, and $L_e = \Ld{H} = \Ld{G_v}$.

      Suppose on the other hand that $H$ has infinite index in $G_v$.  For a contradiction, suppose there is some $U\subset \Ld{H}$ which is open in $\Ld{G_v}$.  This implies that $H$ is non-elementary (since $G_v$ is non-elementary).
      Because $H$ is relatively quasiconvex in $G_v$, there is some conical limit point $p$ of $H$ in $U$.  So there is a sequence of group elements $(h_i)$ and points $a\ne b$, so that $h_i p\to b$ and $h_i x\to a$ uniformly for $x\ne p$.  
      By multiplying all the $h_i$ on the left by some fixed $h$, we can assume that $a\in U$.  
      Let $K$ be the compact set $\Ld{G_v}\setminus U$, and let $y\in \Ld{G_v}\setminus\{p\}$.  For large $n$, we have $h_ny\in h_nK\subset U$.  This implies $y\in h_n U$.  Since $h_n\in H$, this means that $y\in \Ld{H}$.  In conclusion, either $H$ is finite index in $G_v$, or $\Ld{H}$ is nowhere dense in $\Ld{G_v}$.

      
      The set $E(e)$ is nowhere dense in $\Ld{G_v}$ since it consists of parabolic points and 
      the set of conical limit points is dense in $\Ld{G_v
    }$.
      The dichotomy follows.
  \end{proof}

    Since $\Ld{G_v}$ is compact metric, it cannot be a countable union of nowhere dense subsets, there must be some $e\in \mc{E}$ so that $\Ld{G_v} = L_e$.
    Let $\D_v'\subseteq \D_v$ be the set of those $D$ so that $\Ld{G_v}\subset C_D$ and let $\mc{E}'\subset\mc{E}$ be the corresponding collection of edges.

     Suppose now that $G_v$ is elementary.  Since  $G_v$ contains no off-the-wall points Lemma~\ref{lem: end points loxodromic} implies that $\Lambda(G_v)$ is contained in some $C_D$. Similarly, if $G_v$ is parabolic the limit set is a single point which must be in some $C_D$.

Since the stabilizer of every $D\in\mc{D}$ is assumed to be full relatively quasiconvex, Theorem 1.4 of \cite{Hruska10} implies that the  division stabilizers are undistorted. So, Theorem 1.9 of \cite{Tran19} implies that division stabilizers are strongly quasiconvex. Now, Remark 3.31 and Corollary B.(2) of \cite{GM23} imply that the fixed point sets of finite index subgroups of $G_v$ have a uniformly bounded number of cells.  Any $e\in \mc{E}'$ is in the fixed point set of some such finite index subgroup.
Thus $\mc{E}'$ is finite, and so $\D_v'$ is also finite.
\end{proof}

\section{Canonical Divisions}
\label{sec: canonical division}
The goal of this section is to prove Theorem~\ref{thm: JSJ Tree}. In the first subsection, we introduce the canonical collection of divisions associated to a family of cut sets and then make some preliminary observations including about when  $X\big(\mc{D}(\mc{C})\big)$ is a tree. 
In Subsection~\ref{subsec: Vertices of T(C)} we define the tree $\mc{T}(\mc{C})$, establish the necessary assumptions for proving Theorem~\ref{thm: JSJ Tree}, and analyze the vertex stabilizers of $\mc{T}(\mc{C})$.

Finally, in Section~\ref{subsec: connection to Bowditch tree} we explain the connection between $\mc{T}(\mc{C})$ and the cut point tree of the ``pinched'' Bowditch boundary $\bndry(G,\widehat{\mc{P}})$.  In particular we prove Theorem~\ref{thm: JSJ Tree} and Corollary~\ref{cor:homeo to jsj tree}.

\subsection{Canonical divisions and $X\big(\mc{D}(\mc{C})\big)$}\label{subsec: CD}

We recall that a \emph{cut set} in $M$ is any closed, nowhere dense subset of $M$ so that there exists some division $(C,\{A,B\})$ as in Definition~\ref{def: division}.
\begin{definition}
The \emph{valence} of a cut set $C$ is the cardinality of the set of components of $M\minus C$.
\end{definition}

\begin{definition}\label{def:D(C)}
   Suppose that $\mc{C}$ is a collection of finite valence cut sets of $M$.  The \emph{canonical collection of divisions associated to $\mc{C}$} is the set
   \[ \mc{D}(\mc{C}) = \{(C,\{A,M\minus(C\cup A)\})\mid C\in \mc{C}, A\mbox{ a component of }M\minus C\}.\]
   When $C$ has valence $\ge 3$, $A$ is a component of $M\minus C$, and $D = (C,\{A,M\minus(C\cup A)\})$, we refer to $A$ as the \emph{small} halfspace, and $M\minus(C\cup A)$ as the \emph{big} halfspace for $D$.
\end{definition}

This construction results in a small irregularity. Given a cut set in $M$ so that $M\minus C$ has $2<n<\infty$ components gives rise to $n$ different divisions. However, when $n=2$, the cut set gives only one division. This is the underlying reason for subdividing certain edges of $X\big(\mc{D}(\mc{C})\big)$ to obtain the tree $\mc{T}(\mc{C})$ (See Definition~\ref{def:T(C)} below.). When $M$ is the Bowditch boundary of relatively hyperbolic group with no peripheral splittings, we show that $\mc{T}(\mc{C})$ is equivariantly isomorphic to the cut point tree of the ``pinched'' Bowditch boundary $\widehat{M}$. The space $\widehat{M}$ is obtained by identifying cut sets in $\mc{C}$ to points.

\begin{lemma}\label{lem: satisfies assumptions}
    Suppose $M$ is path connected with no path cut point, $G\acts M$ by homeomorphisms, and $\mc{C}$ is a $G$--finite, point-convergent collection of finite valence cut sets.  Then $\mc{D}(\mc{C})$ satisfies Assumption~\ref{as:main}.
\end{lemma}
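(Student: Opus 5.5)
The plan is to check the clauses of Assumption~\ref{as:main} one at a time. The conditions on $M$ itself (non-degenerate, path connected, $T_1$, Baire, no path cut point) and on the $G$--action are either among the hypotheses or, I will assume, part of the standing setup, so they carry over unchanged. The work is in showing that $\mc{D}(\mc{C})$ is a $G$--finite, point-convergent collection of $G$--full divisions. Write $C\in\mc{C}$, let $A_1,\dots,A_n$ be the components of $M\minus C$ (where $2\le n<\infty$), and set $D_i=(C,\{A_i,M\minus(C\cup A_i)\})$.

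\emph{Each $D_i$ is a division.} The set $C$ is closed and nowhere dense because it is a cut set. Components are closed in $M\minus C$, and there are only finitely many of them. So the complement of $A_i$ in $M\minus C$ is a finite union of closed sets, which makes $A_i$ clopen, and then $M\minus(C\cup A_i)$ is clopen as well. Both pieces are nonempty since $n\ge 2$; indeed $M\minus C$ is disconnected because $C$ admits a division. For clause (3), each side is closed in $M\minus C$, so its closure lies in that side together with $C$. The remaining containment is $C\subset\overline{A_i}$ for every $i$. This is the step I expect to be the main obstacle. What I know is the following: some division $(C,\{P,Q\})$ exists, with $C\subset\overline P\cap\overline Q$; each of $P$ and $Q$ is a union of components $A_j$; and $C$ is nowhere dense, so $C\subset\bigcup_j\overline{A_j}$.

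My first attempt at the obstacle argues by contradiction. Suppose $c\in C\minus\overline{A_i}$. I would try to combine the finiteness of the valence with path connectedness and the absence of path cut points to produce a contradiction. If that fails, the fallback is to read ``cut set'' in the intended sense, namely that every complementary component accumulates on all of $C$, which is how the canonical divisions are used later in the section.

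\emph{$G$--finiteness and point-convergence.} The group $G$ permutes $\mc{C}$ with finitely many orbits, and $g$ carries components of $M\minus C$ to components of $M\minus gC$. Hence $gD_i$ is again a canonical division, and each orbit of cut sets contributes at most $n$ orbits of divisions. So $\mc{D}(\mc{C})$ is $G$--invariant and $G$--finite. For a fixed set $C$, the divisions with $C_D=C$ are the $D_i$, of which there are $n<\infty$. The collection $\{C_D\}$ is exactly $\mc{C}$, which is point-convergent by hypothesis.

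\emph{$G$--fullness.} The group $\Stab(C)$ permutes the finite set $\{A_1,\dots,A_n\}$. The kernel of this permutation action has index at most $n!$ in $\Stab(C)$ and fixes each $D_i$. Hence $\Stab(D_i)$ has finite index in $\Stab(C_{D_i})$.
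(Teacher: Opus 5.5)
Your arguments for $G$--finiteness, point-convergence and $G$--fullness are the same as the paper's. The paper just notes that $\Stab(D)=\Stab(A)$ has finite index in $\Stab(C)$ because $A$ is one of finitely many components; your kernel-of-the-permutation-action argument makes this explicit. Like you, the paper silently takes the $T_1$, Baire and countability clauses for granted.

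The paper's proof never checks that members of $\mc{D}(\mc{C})$ are divisions, so the step you flag is a gap in the paper as well. No argument can close it, because condition (3) of Definition~\ref{def: division} can fail under the stated hypotheses. Your first attempt is therefore doomed. Here is a counterexample:
\begin{itemize}
\item Let $M$ be the graph $K_4$ minus one edge, with vertices $z,c_1,c_2,c_3$ and edges $zc_1,zc_2,zc_3,c_1c_2,c_2c_3$. It is $2$--connected, so $M$ is compact, path connected, locally path connected, and has no (path) cut points.
\item Take $G$ trivial and $\mc{C}=\{C\}$ with $C=\{c_1,c_2,c_3\}$.
\item The components of $M\minus C$ are the open star $A_1$ of $z$, the open edge $A_2$ from $c_1$ to $c_2$, and the open edge $A_3$ from $c_2$ to $c_3$.
\item Since $\overline{A_1}=A_1\cup C$ and $\overline{A_2\cup A_3}=A_2\cup A_3\cup C$, the triple $(C,\{A_1,A_2\cup A_3\})$ is a division. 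So $C$ is a valence-$3$ cut set and all hypotheses hold.
\item But $\overline{A_2}=A_2\cup\{c_1,c_2\}$, so $(C,\{A_2,A_1\cup A_3\})\in\mc{D}(\mc{C})$ is not a division, and the conclusion of the lemma fails.
\end{itemize}

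Your hypotheses do yield something weaker. Since a component $A$ is closed in $M\minus C$, we have $\overline{A}=A\cup(\overline{A}\cap C)$, so $A$ is clopen in $M\minus(\overline{A}\cap C)$. Connectedness then forces $\overline{A}\cap C\neq\emptyset$. If $\overline{A}\cap C=\{p\}$ while $C$ has a second point, every path from $A$ to that point passes through $p$, so $p$ is a path cut point. Hence $C\subseteq\overline{A}$ is automatic for cut pairs, but not for larger cut sets.

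Your fallback, assuming every component of $M\minus C$ accumulates on all of $C$, is therefore a necessary extra hypothesis, not an optional reading. It does hold where the lemma is actually used, under Assumption~\ref{assump:RH}. There $C=\Ld{\Stab(C)}$ by Proposition~\ref{prop:qc iff null}, and a finite-index subgroup of $\Stab(C)$ preserves each component $A$. So $\overline{A}\cap C$ is a nonempty closed invariant subset of the limit set, hence all of $C$ when $\Stab(C)$ is non-elementary. The two-point case is covered by the observation above.
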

\begin{proof}
    The $G$--finiteness of $\mc{D}(\mc{C})$ follows from the $G$--finiteness of $\mc{C}$ together with the fact that each $C\in \mc{C}$ gives rise to only finitely many divisions in $\mc{D}(\mc{C})$ (since $M\minus C$ has only finitely many components).  Point-convergence of $\mc{D}(\mc{C})$ follows in the same way.
    If $D = (C,\{A,B\})$ then $\Stab(D) = \Stab(A)$ is finite index in $\Stab(C)$, again because $A$ is one of only finitely many components of $M\minus C$.  In particular, each $D\in \mc{D}(\mc{C})$ is $G$--full.  
\end{proof}

\begin{example}
 Let $\Sigma$ be a compact hyperbolic surface with connected geodesic boundary; let $Y$ be the space obtained by identifying the boundaries of three copies of $\Sigma$ with a single circle $c$.  The group $G = \pi_1(Y)$ is hyperbolic, and $\partial G$ can be identified with the visual boundary of the universal cover $\widetilde{Y}$ of $Y$.  Let $\ell$ be an elevation of $c$ to $\widetilde{Y}$ and let $C$ be the cut pair $\ell^{\pm\infty}$.
 The complement $M\setminus C$ consists of three components $M_1$, $M_2$, and $M_3$ each in distinct orbits of $\Stab(C)$. This determines three divisions $\big(C, \{M_1, M_2\cup M_3\}\big)$, $\big(C, \{M_2, M_1\cup M_3\}\big)$, and $\big(C, \{M_3, M_2\cup M_1\}\big)$. If $\mc{C}=\set{gC}{g\in G}$, then $\mc{D}=\mc{D}(\mc{C})$ is the set of $G$-translates of these three divisions.
  The cube complex $X(\mc{\mc{W}_{\mc{D}}})$ is the Bass-Serre tree for the splitting of $G$ over $\Stab(C)$.
\end{example}

\begin{definition}\label{def:mutually separate}
    Two cut sets $C,C'\in M$ are \emph{mutually non-separating} if $C'$ is contained in a connected component of $M\setminus C$, and $C$ is contained in a connected component of $M\setminus C'$. 
\end{definition}

\begin{proposition}
\label{prop: mns is tree}
   Suppose $M$ is path connected with no path cut point, $G\acts M$ by homeomorphisms, and $\mc{C}$ is a $G$--finite, point-convergent, pairwise mutually non-separating collection of finite valence cut sets.  Then $X\big(\mc{D}(\mc{C})\big)$ is a tree.
\end{proposition}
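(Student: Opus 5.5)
By Lemma~\ref{lem: satisfies assumptions}, $\mc{D}=\mc{D}(\mc{C})$ satisfies Assumption~\ref{as:main}, so $X(\mc{D})$ exists and is $\CAT(0)$. By Proposition~\ref{prop: dimension cube complex}, it is a tree exactly when no two walls $W_D, W_{D'}$ are transverse. Theorem~\ref{thm:tree} does not apply directly, because its connectivity hypotheses on $C_D\setminus I$ may fail. I will instead argue directly with components, imitating the claim in its proof. The goal is to exhibit, for each pair of distinct divisions $D,D'$, signs $\epsilon,\delta$ with $M_{D}^{\epsilon}\cap M_{D'}^{\delta}=\emptyset$. This gives $W_D^{\epsilon}\cap W_{D'}^{\delta}=\emptyset$, since a triple cannot have two points in each of two disjoint sets.

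\emph{Case 1: $C_D=C_{D'}=C$.} Write $D=(C,\{A,M\setminus(C\cup A)\})$ and $D'=(C,\{A',M\setminus(C\cup A')\})$ for distinct components $A\neq A'$ of $M\setminus C$. Then $A\cap A'=\emptyset$, so the two small halfspaces already work. If $C$ has valence $2$ the divisions coincide, so this case does not arise.

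\emph{Case 2: $C\neq C'$.} By mutual non-separation, $C'$ lies in a single component $B$ of $M\setminus C$, and $C$ lies in a single component $B'$ of $M\setminus C'$. I claim that every component $A$ of $M\setminus C$ other than $B$ is a connected subset of $M\setminus C'$, and hence lies in one component of $M\setminus C'$. Moreover, since $\overline{A}$ meets $C$ (connectedness of $M$ and condition (3) of Definition~\ref{def: division}), and $\overline{A}\cap C' \subseteq \overline{A}\cap B=\emptyset$, that component must be $B'$. Symmetrically, every component of $M\setminus C'$ other than $B'$ lies in $B$.

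Now choose the halfspace $S$ of $D$ that avoids $B$: the small halfspace $A$ if $A\neq B$, and otherwise the big halfspace $M\setminus(C\cup B)$. In either case $S$ is a union of components of $M\setminus C$ other than $B$, so $S\subseteq B'$. Similarly choose the halfspace $S'$ of $D'$ avoiding $B'$; then $S'\subseteq B$. Since $S$ is disjoint from $B'$'s complement only in the sense that $S\subseteq B'$ while $S'\cap B'=\emptyset$, we get $S\cap S'=\emptyset$, as required.

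One must check that these chosen halfspaces are genuinely halfspaces of $D$ and $D'$ and are nonempty. If $D$'s distinguished component $A$ equals $B$, the complement $M\setminus(C\cup B)$ is nonempty because $C$ is a cut set, so it has at least two complementary components.

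\textbf{Main obstacle.} The delicate step is the topological claim that a component $A$ of $M\setminus C$ has closure meeting $C$. This needs $M$ connected and $A\neq M\setminus C$. It also needs care: components need not be open, and condition (3) of Definition~\ref{def: division} is stated for the halfspaces, not for components. I would argue as follows. If $\overline{A}\cap C=\emptyset$, then $\overline{A}$ is a connected subset of $M\setminus C$ containing $A$, so $\overline{A}=A$ is closed. Using finite valence, the finitely many components of $M\setminus C$, whose union is open, are each closed in $M\setminus C$ and hence clopen there. Thus $A$ would be clopen in $M$, contradicting connectedness. The remaining steps are bookkeeping.
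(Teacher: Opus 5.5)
Your proof is correct and follows essentially the paper's argument: for each division you take the halfspace not containing the other cut set (the paper's $M_i^-$), and you show these two halfspaces are disjoint via connectedness. You also explicitly handle two distinct divisions with the same cut set, which the paper's proof does not treat. In addition, you replace the paper's appeal to connectedness of $C_1\cup M_1^-$ with a component-by-component closure argument that uses only finite valence and connectedness of $M$.
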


\begin{proof}
    By construction $X(\mc{D}(\mc{C}))$ is a $\CAT(0)$ cube complex, so if it is $1$--dimensinal it is a tree.
    By Proposition~\ref{prop: dimension cube complex}, it is enough to show that no two walls in $\mc{W}_{\mc{D}(\mc{C})}$ are transverse.
    
      Let $W_{D_1}$ and $W_{D_2}$ be walls in $\mc{W}_{\mc{D(\mc{C})}}$ corresponding to divisions $D_1=\big(C_1, \{A_1,M\setminus (C_1\cup A_1)\}\big)$ $D_2=\big(C_2, \{A_2,M\setminus (C_2\cup A_2)\}\big)$. 
    By assumption, $C_2$ is contained in a connected component of $M\setminus C_1$, and $C_1$ is contained in a connected component of $M\setminus C_2$. Thus $C_i\subset A_j$ or $C_i\subset M\setminus(C_j\cup A_j)$ for $i,j\in\{1,2\}$ with $i\neq j$.  Let $M_i^+$ be the element of $\{A_i, M\setminus (C_i\cup A_i)\}$ which contains $C_j$, and let $M_i^-$ be the other.  To see that $W_{D_1}$ and $W_{D_2}$ are not transverse, it suffices to show that $M_1^-\cap M_2^-=\emptyset$.  Notice that $M_1^-\cap M_1^+=\emptyset$ implies that $M_1^-\cap C_2=\emptyset$. So, $C_1\cup M_1^-$ is contained in $M\setminus C_2$. Since $C_1\cup M_1^-$ is connected and $C_1\subset M_2^+$, we can conclude $C_1\cup M_1^-\subset M_2^+$.  Thus, $M_1^-\cap M_2^-=\emptyset$ as desired.  We have shown $X\big(\mc{D}(\mc{C})\big)$ is $1$--dimensional, and hence a tree.
\end{proof}

As an example application of our setup we have the following.
\begin{corollary}
   Let $(G,\mc{P})$ be a relatively hyperbolic group, and assume that $M=\bndry(G,\mc{P})$ has no cut points, and suppose  $\mc{C}$ is point-convergent, $G$--finite collection of mutually non-separating finite valence  cut sets each homeomorphic to $S^1$.
   Then $G$ splits over a quasiconvex virtual surface group.\footnote{Here "virtual surface group" includes, in the relatively hyperbolic case, the possibility that the surface is not closed, but finite type, with boundary components representing parabolic elements.}
   If the cut sets do not contain parabolic points, then $G$ splits over a virtual closed surface group.
\end{corollary}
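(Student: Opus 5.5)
The plan is to build the tree $X\big(\mc{D}(\mc{C})\big)$ and read off the splitting from it. First, $M=\bndry(G,\mc{P})$ has no cut points, and by Dasgupta--Hruska it is locally connected; it is compact metric, hence locally path connected. Lemma~\ref{lem:pathcut} then shows $M$ has no path cut points, and it is path connected since it is a Peano continuum. Thus Lemma~\ref{lem: satisfies assumptions} applies and shows $\mc{D}(\mc{C})$ satisfies Assumption~\ref{as:main}. Proposition~\ref{prop: mns is tree} then shows that $X=X\big(\mc{D}(\mc{C})\big)$ is a tree, and Theorem~\ref{thm:sageev} says that its edge (hyperplane) stabilizers are exactly the division stabilizers $\Stab(D)$. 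Each of these is finite index in $\Stab(C)$ for some $C\in\mc{C}$.

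Second, I need the action to be nontrivial, i.e.\ no global fixed point, so that Bass--Serre theory gives a genuine splitting over some $\Stab(D)$. I will use Proposition~\ref{prop:noglobalfp}. Fix $D=(C,\{A,B\})$. Since $G$ is non-elementary and $A,B$ are nonempty open sets, Tukia's density result \cite[Theorem 2R]{Tuk94} gives a loxodromic $g$ with attracting point in $A$ and repelling point in $B$. For a large power, $g^n\overline{A}\subset A$, and $g^n(M\setminus B)$ is a small neighbourhood of the attracting point, so $g^nA\subset A$ with $A\setminus\overline{g^nA}\ne\emptyset$. This makes the hyperplane essential and $g$ hyperbolic on $X$. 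Consequently $G$ splits nontrivially as an amalgam or HNN extension over an edge stabilizer commensurable with $\Stab(C)$. Passing to the orientation-preserving subgroup (Remark~\ref{rem:inversions}) handles inversions.

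Third, I identify the edge group. Since $\mc{C}$ is point-convergent on a compact metric space, it is null by Remark~\ref{rem: ptconv iff null}. A circle has no isolated points, so Proposition~\ref{prop:qc iff null} shows $C=\Ld{\Stab(C)}$ with $\Stab(C)$ relatively quasiconvex. The finite-index edge group $E$ then acts on $C\cong S^1$ as a geometrically finite convergence group.

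The main obstacle is this last step: upgrading the convergence action on $S^1$ to a virtual surface group. I would invoke the convergence group theorem (Tukia, Gabai, Casson--Jungreis). A geometrically finite convergence action on $S^1$ whose limit set is all of $S^1$ is conjugate to a finite-covolume Fuchsian action, so $E$ is virtually the fundamental group of a finite-type hyperbolic surface whose cusps correspond to the parabolic points. If $C$ contains no parabolic points, the action is uniform, so $E$ is a cocompact Fuchsian group, i.e.\ a virtual closed surface group. Quasiconvexity (relative quasiconvexity in the cusped case) was already established in the third step.
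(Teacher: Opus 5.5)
Your proposal is correct and follows essentially the same route as the paper: the tree comes from Proposition~\ref{prop: mns is tree}, and a global fixed point is ruled out by taking a loxodromic from \cite[Theorem 2R]{Tuk94} with attracting point in $A$ and repelling point in $B$ and applying Proposition~\ref{prop:noglobalfp}, while the edge group is identified through Proposition~\ref{prop:qc iff null} and the convergence group theorem on $S^1$. You are slightly more careful than the paper: you explicitly verify Assumption~\ref{as:main} and the condition $A\setminus\overline{g^nA}\neq\emptyset$ needed in Proposition~\ref{prop:noglobalfp}, and your ``orientation-preserving subgroup'' is exactly the edge group produced by the paper's subdivision to remove inversions.
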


\begin{proof}
    Proposition~\ref{prop:qc iff null} implies that the stabilizer of each $C\in\mc{C}$ is relatively quasiconvex. Then $\Stab(C)$ acts on $C$ as a geometrically finite convergence group by Defintion~\ref{def: dynamic rel qc}. Then Theorem 6B of \cite{Tukia88} implies $\Stab(C)$ is virtually a surface group.  If $C$ contains no parabolic point, then $\Stab(C)$ is virtually a \emph{closed} surface group.

By Prop~\ref{prop: mns is tree} the cube complex $X = X\big(\mc{D}(\mc{C})\big)$ is a tree.  After subdividing edges if necessary we may assume that $G$ acts on $X$ without inversions. It remains to show that the action on $X$ is without fixed point. 

Consider a division $D=(C_D, \{M_D^+,M_D^-\})$ with $C_D\in\mc{C}$. We may find a loxodromic element $g\in G$ with $g^{+\infty}\in M_D^{+}$ and $g^{-\infty}\in M_D^-$ \cite[Theorem 2R]{Tuk94}. Let $U\subset M_D^{+}$ be an open neighborhood of $g^{+\infty}$, and $V\subset M_D^{-\infty}$ a neighborhood of $g^{-\infty}$. The element $g$ acts on $M$ with north-south dynamics. So there is an $n$ such that for all $k\geq n$ we have $g^k(M\setminus V)\subset U$. Thus $g^k(M_D^+)\subset M_D^+$. By Proposition~\ref{prop:noglobalfp}, $G$ acts on $X$ without a global fixed point. 
\end{proof}

\subsection{A subdivided tree $\mc{T}(\mc{C})$ and its vertex stabilizers 
when $M = \partial(G,\mc{P})$}\label{subsec: Vertices of T(C)}
\begin{definition}[The tree $\mc{T}(\mc{C})$]\label{def:T(C)}
  Suppose $G\acts M$ and $\mc{C}$ satisfy the assumptions of Proposition~\ref{prop: mns is tree}.
  Let $\mc{E}\subset\mc{C}$ be the collection of valence two cut sets and let $\mc{T}(\mc{C})$ be the tree obtained from $X\big(\mc{D}(\mc{C})\big)$ by subdividing an edge $e$ precisely when $\Stab(e)=\Stab(C)$ for some $C\in \mc{E}$.  We note that there is exactly one such edge for any element of $\mc{E}$.
\end{definition}

We make the following assumptions for the rest of Section~\ref{sec: canonical division}. 
\begin{assumption}\label{assump:RH}
    $(G,\mc{P})$ is a relatively hyperbolic group pair, $M = \partial (G,\mc{P})$ is connected with no cut points, and $\mc{C}$ is a point-convergent, $G$--finite collection of pairwise mutually non-separating finite valence cut sets.  Finally we assume that no $C\in\mc{C}$ contains an isolated parabolic point.
\end{assumption}
These assumptions imply the assumptions of Proposition~\ref{prop: mns is tree}, so the tree $\mc{T}(\mc{C})$ is defined.

The main results of this subsection are Lemma~\ref{lem: three types} and Lemma~\ref{lem: not adj}.  Lemma~\ref{lem: three types} shows that the vertices of $\mc{T}(\mc{C})$ are divided into three types; Lemma~\ref{lem: not adj} describes adjacency between these three types of vertices.

\begin{definition}\label{def: equivalence rel}
Two off-the-wall points $a$ and $b$ are \emph{equivalent} 
if there is no division $D\in\mc{D}$ so that $a\in M_D^+$ and $b\in M_D^-$.  We write this as $a\sim b$.
  
\end{definition}

\begin{definition}\label{def:principal}
    A vertex $v\in X(\mc{D})$ is \emph{principal for the equivalence class $\alpha$} if $v$ is principal (in the sense of Definition~\ref{def:principal1}) for some (hence for any) triple $(a,b,c)$ where at least two of $\{a,b,c\}$ lie in $\alpha$.
\end{definition}

\begin{definition}\label{def:semi-principal}
   Let $C$ be a cut set in $\mc{C}$ of valence at least $3$.  A vertex $v$ of $X\big(\mc{D}(\mc{C})\big)$ is \emph{semi-principal for $C$} if it is defined as follows.  Let \[D = (C_D,\{M_D^+,M_D^-\})\in \mc{D}(\mc{C})\]  If $C\ne C_D$, and $C\subset M_D^+$, we let $v(D) = W_D^+$; otherwise $v(D) = W_D^-$.  If on the other hand $C = C_D$, and $M_D^\epsilon$ is the big halfspace for $D$, we set 
   we set $v(D) = W_D^\epsilon$. In other words, we set $v(D) = W_D^+$ if $M_D^+$ is disconnected; otherwise we set $v(D) = W_D^-$.
\end{definition}

\begin{lemma}\label{lem:uniqueprincipal}
    If $v$ is principal for $\alpha$ and principal for $\beta$, then $\alpha = \beta$.
\end{lemma}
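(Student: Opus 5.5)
We prove the contrapositive: if $\alpha\neq\beta$ are distinct equivalence classes of off-the-wall points, then no vertex can be principal for both. The plan is to find a single wall on which the principal ultrafilters for $\alpha$ and $\beta$ disagree.

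First, we make precise that "principal for $\alpha$" is well defined. Suppose $t=\{a,b,c\}$ and $t'=\{a',b',c'\}$ are triples in $\ThetaD$ with, say, $a,b\in\alpha$ and $a',b'\in\alpha$. Fix $D\in\mc{D}$. Since $a\sim b$, the points $a$ and $b$ lie in the same side $M_D^\epsilon$; otherwise one would be in $M_D^+$ and the other in $M_D^-$, or the reverse, contradicting the symmetric reading of Definition~\ref{def: equivalence rel}. Hence $t\in W_D^\epsilon$ by majority. Moreover $a'\sim a$, so $a'$ and $b'$ also lie in $M_D^\epsilon$, and therefore $t'\in W_D^\epsilon$. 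Thus the principal ultrafilters of $t$ and $t'$ agree on every wall, so the vertex $v_\alpha$ principal for $\alpha$ is determined by $\alpha$. Concretely, $v_\alpha(D)=W_D^\epsilon$, where $M_D^\epsilon$ is the side containing $\alpha$.

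Next, take $a\in\alpha$ and $b\in\beta$ with $\alpha\ne\beta$. Then $a\not\sim b$, so by definition there is a division $D\in\mc{D}$ with $a\in M_D^+$ and $b\in M_D^-$. The whole class $\alpha$ lies in $M_D^+$ and the whole class $\beta$ lies in $M_D^-$, by the argument in the previous step. Choose a triple with two points in $\alpha$; by majority it lies in $W_D^+$. Choose a triple with two points in $\beta$; it lies in $W_D^-$. Consequently $v_\alpha(D)=W_D^+\ne W_D^-=v_\beta(D)$, so $v_\alpha\neq v_\beta$, which proves the lemma.

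The main subtlety is the existence of triples realizing principality, which requires a class to contain at least two points. This is already built into Definition~\ref{def:principal}: a vertex principal for $\alpha$ presupposes such a triple. If needed, density of $M_\mc{D}$ (from Lemma~\ref{lem: Baire}) supplies the third point, since we can take any off-the-wall point distinct from the other two. I expect no real obstacle beyond correctly handling the symmetric reading of $\sim$: the relation as defined only rules out the ordered configuration $a\in M_D^+$, $b\in M_D^-$, so we read it as ruling out both configurations.
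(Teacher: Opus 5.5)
Your proof is correct and is essentially the paper's argument. Inequivalent off-the-wall points are separated by some division $D$, and the principal ultrafilters for triples with a majority in $\alpha$, respectively in $\beta$, then choose opposite halfspaces of $W_D$. You argue directly from the definition of $\sim$ rather than passing through components of $M\setminus C$ as the paper does, and you make explicit the well-definedness and the final disagreement on $W_D$ that the paper leaves implicit.
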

\begin{proof}
    If $x$ and $y$ are inequivalent off-the-wall points in $M$, then there is some cut set $C\in \mc{C}$ so that $x$ and $y$ are in separate components.  There is therefore some $D\in \mc{D}(\mc{C})$ so that $x$ and $y$ are in separate halfspaces for $D$.
\end{proof}
\begin{lemma}\label{lem:uniquesemiprincipal}
    If $v$ is semi-principal for $C$ and for $C'$, then $C = C'$.  Moreover $v$ is not principal.  
\end{lemma}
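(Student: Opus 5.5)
The plan is to compare, for each of the relevant vertices, the halfspaces they choose for divisions whose cut set is $C$ or $C'$.

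\emph{Uniqueness of $C$.} Suppose $v$ is semi-principal for both $C$ and $C'$ with $C\neq C'$; both have valence $\ge 3$. Since $\mc{C}$ is pairwise mutually non-separating, $C'$ lies in a single component $A$ of $M\minus C$. Consider the division $D=(C,\{A,M\minus(C\cup A)\})\in\mc{D}(\mc{C})$; here $A$ is the small halfspace.
\begin{itemize}
\item Semi-principality for $C$ (the case $C_D=C$) forces $v(D)$ to be the wall halfspace of the big side $M\minus(C\cup A)$.
\item Semi-principality for $C'$ (the case $C_D\ne C'$) forces $v(D)$ to be the halfspace of the side containing $C'$, namely $A$.
\end{itemize}
These disagree, so $C=C'$.

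\emph{$v$ is not principal.} Suppose $v$ is semi-principal for $C$ and also principal for an equivalence class $\alpha$. Choose an off-the-wall point $a\in\alpha$. Then $a$ lies in some component $A$ of $M\minus C$. Let $D=(C,\{A,M\minus(C\cup A)\})$.
\begin{itemize}
\item Principality means $v$ is the principal ultrafilter of a triple with at least two points in $\alpha$. All points of $\alpha$ lie in the same halfspace of every division, by Definition~\ref{def: equivalence rel}. So $v(D)$ is the halfspace containing $a$, namely the one coming from $A$, the small side.
\item Semi-principality for $C$ forces $v(D)$ to be the big side.
\end{itemize}
This is a contradiction, since valence $\ge 3$ makes $A$ differ from the big halfspace.

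The only point needing care is the claim that principality for $\alpha$ makes $v(D)$ agree with the side containing $\alpha$. This follows from the majority rule in the definition of $W_D^\pm$: two of the three points lie in $\alpha$, hence on the same side of $D$. No real obstacle is expected.
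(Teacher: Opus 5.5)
Your proof is correct and follows the paper's argument. The only differences are cosmetic. For uniqueness, you test the division with cut set $C$ and small side containing $C'$, whereas the paper uses the mirror-image division with cut set $C'$. For non-principality, you unpack pointwise the paper's observation that $\bigcap\{h_v(D)\mid C_D = C\}$ is empty.
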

\begin{proof}
Suppose that $v$ is semi-principal for $C$, and suppose $C'\ne C$.  
Since $C'$ and $C$ are mutually non-separating, $C$ is contained in some component $A$ of $M\minus C'$.  In particular there is some division $D$ with $C_D = C'$ and $h_v(D) = A$.  But if $v$ were semiprincipal for $C'$, then we could not have $h_v(D)$ connected.  This contradiction implies that $v$ is not semi-principal for $C'$.  

To see that $v$ is not principal, notice that
$\bigcap \{h_v(D)\mid C_D = C\}$ is empty.
\end{proof}

\begin{lemma}\label{lem: three types}
Let $v$ be a vertex of $\mc{T}(\mc{C})$.  Then $v$ is of one of the following three mutually exclusive types.
    \begin{enumerate}
        \item\label{type:principal} principal for an equivalence class of an off-the-wall point.
        \item\label{type:semi-princ} semi-principal for some $C\in \mc{C}$ of valence at least $3$.
        \item\label{type:midpt} the midpoint of an edge corresponding to a valence two cut set.       
    \end{enumerate}   
\end{lemma}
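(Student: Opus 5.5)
Mutual exclusivity is immediate from what precedes. Midpoints of subdivided edges are not vertices of $X\big(\mc{D}(\mc{C})\big)$, so they are neither principal nor semi-principal. By Lemma~\ref{lem:uniquesemiprincipal}, a semi-principal vertex is not principal. The substance is therefore exhaustiveness: every vertex $v$ of $X\big(\mc{D}(\mc{C})\big)$ must be principal for some class or semi-principal for some $C$ of valence at least $3$.

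The plan is to study the set $I_v = \bigcap_{D} \overline{h_v(D)}$, or rather its off-the-wall part, using the tree structure from Proposition~\ref{prop: mns is tree}. Two cases arise.

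\textbf{Case 1: $\bigcap_D h_v(D)$ contains an off-the-wall point $a$.} Let $\alpha$ be the class of $a$. For every $D$, take a triple of distinct points in $\alpha$, which is possible since $M_\mc{D}$ is dense by Lemma~\ref{lem: Baire}. This triple lies in $h_v(D)$ by equivalence, so its principal ultrafilter agrees with $v$ on every wall. Hence $v$ is principal for $\alpha$.

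\textbf{Case 2: no off-the-wall point lies in every $h_v(D)$.} I use the tree structure directly. Since $X$ is a tree and $v$ is a vertex, the halfspaces chosen by $v$ are determined by the edges at $v$, which correspond to the minimal halfspaces of Remark~\ref{rem:minimality}. Take a principal vertex $u$, for instance the vertex of any off-the-wall triple, and the geodesic from $u$ to $v$. By DCC, $v$ has only finitely many minimal halfspaces locally near this path. I aim to show that the cut sets $C_D$ for the walls adjacent to $v$ all coincide with a single $C$. Suppose two adjacent walls $D_1, D_2$ at $v$ have distinct cut sets $C_1 \neq C_2$. Mutual non-separation, as in the proof of Proposition~\ref{prop: mns is tree}, gives a nesting in which $C_2 \cup M_2^-$ lies in one side of $C_1$. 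Then the region between them, $h_v(D_1) \cap h_v(D_2)$, contains an open set. Consequently some off-the-wall class $\alpha$ sits "between" them. The goal is to show that the principal vertex for $\alpha$ is $v$ itself; otherwise some wall separates that vertex from $v$, contradicting the adjacency of both $D_1$ and $D_2$ to $v$.

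So all edges at $v$ come from one cut set $C$, and $v$ chooses $h_v(D)$ with $C_D = C$ for each such edge. If $v$ chose a small (connected) halfspace $A$ for some $D$ with $C_D = C$, then $A$ would contain off-the-wall points, and points of $A$ would survive all other walls, since every other cut set is either in $A$ or in its complement. That would return us to Case 1. So $v$ chooses big halfspaces for every $D$ with $C_D = C$, and the other walls are oriented toward $C$. This is exactly the semi-principal vertex. If $C$ had valence $2$, the two divisions of $C$ coincide, so "big" is meaningless. In that case the configuration forces $v$ to lie adjacent to the subdivided edge, giving a contradiction or type (3).

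The main obstacle is the step in Case 2 showing that the edges at $v$ all come from one cut set. This requires producing a genuine off-the-wall class realizing $v$ when two distinct cut sets are adjacent, which uses the density of $M_\mc{D}$ and the Baire hypothesis carefully. If that argument fails, I would fall back on a more global approach: show directly that the consistency and DCC of $v$ together with point-convergence force $\bigcap_D \overline{h_v(D)}$ to be either a closure of an equivalence class or a single cut set.
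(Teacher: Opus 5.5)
\textbf{There is a genuine gap.} Your mutual-exclusivity argument is fine, but the exhaustiveness argument never produces off-the-wall points in an \emph{infinite} intersection $\bigcap_D h_v(D)$, and that is the entire content of the lemma. Density of $M_{\mc{D}}$ (Lemma~\ref{lem: Baire}) only gives points in open sets, i.e.\ in finite intersections of halfspaces.

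In Case 1, the class of $a$ is exactly $\alpha = M_{\mc{D}}\cap\bigcap_D h_v(D)$, because $v$ chooses the side containing $a$ on every wall. So ``take three distinct points of $\alpha$'' is precisely the nontrivial claim that this infinite intersection has more than one point. Density of $M_{\mc{D}}$ says nothing about the size of a single class, and singleton classes do occur. In the surface-amalgam example, boundary points corresponding to ends of the Bass--Serre tree are off-the-wall and are separated from every other point by some cut pair. You would have to use that $v$ is a genuine DCC vertex to rule out $\alpha=\{a\}$ for a vertex of infinite valence, and your argument does not do this.

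The paper handles this step with the group. $G_v$ is relatively quasiconvex and not parabolic, since otherwise $G$ would have a peripheral splitting. So an off-the-wall $p\in\Ld{G_v}$ comes with a second off-the-wall $q\in\Ld{G_v}$, obtained from a free subgroup, or from a loxodromic together with Lemma~\ref{lem: end points loxodromic}. Lemma~\ref{lem:containrevised} then puts both $p$ and $q$ in every $h_v(D)$.

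Case 2 has the same defect, and the proposed mechanism does not work. Suppose two edges at $v$ carry distinct cut sets $C_1\neq C_2$. The principal vertex of an off-the-wall class lying in $h_v(D_1)\cap h_v(D_2)$ can be any vertex of the subtree on the $v$-side of both edges. A wall separating it from $v$ is therefore no contradiction to the adjacency of $D_1$ and $D_2$ to $v$.

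Similarly, the claim that points of a chosen small halfspace $A$ ``survive all other walls'' is false, because cut sets lying inside $A$ separate $A$. Only the class in $A$ adjacent to $C$ would do, and its nonemptiness is again an infinite-intersection statement. Your fallback is essentially a restatement of the lemma.

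The missing ingredient is a source of points inside $\bigcap_D\overline{h_v(D)}$, and the topological hypotheses alone do not supply one. The paper takes this source to be $\Ld{G_v}$. Proposition~\ref{prop:FiniteInt} gives the dichotomy that $\Ld{G_v}$ either contains an off-the-wall point or lies in a single $C\in\mc{C}$. Lemma~\ref{lem:containrevised} then forces $v$ to be principal in the first case. In the second case it forces $v$ to choose, for every $D$ with $C_D\neq C$, the halfspace containing $C$, after which $v$ is identified as semi-principal for $C$.
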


\begin{proof} 
   Suppose $v$ is not a type \eqref{type:midpt} vertex.
   We claim first of all that $G_v$ is relatively quasiconvex and not parabolic.  Indeed, since all edge stabilizers are relatively quasiconvex, \cite[Proposition 5.2]{HH23} implies that $G_v$ is relatively quasiconvex.  If $G_v$ were parabolic, then for any edge $e$ incident to $v$, the group $G_e$ is also parabolic.  In other words, $G$ would have a peripheral splitting, and so $M$ would be either disconnected or have a cut point~\cite{Bow01_Peripheral}, contradicting Assumption~\ref{assump:RH}.
   
   By Proposition ~\ref{prop:FiniteInt} there are two cases:
   \medskip
   
        \begin{enumerate}[(i)]
        \item\label{itm:otw} There exists an off-the-wall point $p\in \Ld{G_v}$.
        \item\label{itm: no otw} $\Ld{G_v}$ is contained in a cut set $C$ of $M$.
        \end{enumerate}

\medskip
We first show that \eqref{itm:otw}
implies that $v$ is type \eqref{type:principal}. 
To begin we claim there is at least one other off-the-wall point $q$ in $\Ld{G_v}$.  Since $G_v$ is relatively quasiconvex and not parabolic, $G_v$ either contains an all-loxodromic free group of rank $2$, or $G_v$ is virtually infinite cyclic generated by a loxodromic.
In the first case, we may take any image of $p$ under an element of $G_v$ not stabilizing $p$.  Otherwise $G_v$ is virtually generated by some $g$ with attracting fixed point $p$ and repelling fixed point some $q\ne p$.  If $q$ were contained in a cut set $C$, then by Lemma~\ref{lem: end points loxodromic} we obtain a contradiction since $p$ is off-the-wall.
By Lemma~\ref{lem:containrevised}, $p\sim q$.  Finally we claim $v$ must be principal for any triple of the form $(p,q,r)$.  Let $D\in \mc{D}(\mc{C})$.  Again using Lemma~\ref{lem:containrevised}, both $p$ and $q$ must be contained in $h_v(D)$, so $v(W_D)$ is equal to the halfspace containing the triple $(p,q,r)$.

Now, we show that \eqref{itm: no otw} implies that $v$ is type \eqref{type:semi-princ}. Suppose $\Ld{G_v}\subset C$ for some $C\in\mc{C}$.  Let $D\in \mc{D}(\mc{C})$.  There are two sub-cases.  In case $C_D\neq C$, then by Lemma~\ref{lem:containrevised} $h_v(D)$ contains $C_{D_0}$.  There is only one consistent way to extend such an ultrafilter to the set $\set{W_D}{C_D = C}$, so $v$ is semi-principal for $C$. 
\end{proof}

\begin{lemma}\label{lem: not adj}
    Let $(1)$, $(2)$, and $(3)$ be the vertex types described in Lemma~\ref{lem: three types}. In $ \mc{T}(\mc{C})$: 
    \begin{enumerate}[(i)]
    \item\label{itm:sametype nonadj} vertices of the same type are not adjacent, and 
    \item\label{itm:nonprinc nonadj} vertices of type \eqref{type:semi-princ} and type \eqref{type:midpt} are never adjacent.
    \item\label{itm:correct valence} a vertex of type~\eqref{type:semi-princ} or~\eqref{type:midpt} has finite valence, equal to the valence of the corresponding cut set.
    \end{enumerate}
\end{lemma}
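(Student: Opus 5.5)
Adjacency in $X\big(\mc{D}(\mc{C})\big)$ means that two ultrafilters differ on exactly one wall $W_D$, and that the halfspace being flipped is minimal. The plan is to track which wall is crossed and what that forces on the two endpoints. Edges of $\mc{T}(\mc{C})$ come in two kinds. An \emph{unsubdivided} edge joins two vertices of $X(\mc{D})$ that differ on a wall $W_D$ with $C_D$ of valence $\ge 3$. A \emph{half} of a subdivided edge joins a type~\eqref{type:midpt} vertex to an endpoint of the original edge, which is dual to $W_D$ with $C_D$ of valence two. The three claims will be proved in the order \eqref{itm:correct valence}, \eqref{itm:nonprinc nonadj}, \eqref{itm:sametype nonadj}.

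For \eqref{itm:correct valence}, first take a midpoint vertex. It has valence $2$, which is the valence of its cut set, and both of its neighbours are endpoints of the original edge. Next let $v$ be semi-principal for $C$, of valence $n\ge 3$. By Remark~\ref{rem:minimality}, the edges at $v$ correspond to the minimal halfspaces chosen by $v$. The plan is to show these are exactly the $n$ big halfspaces $W_D^\epsilon$ with $C_D=C$.
\begin{itemize}
\item These are minimal. Flipping one of them, so that $v$ now chooses the small halfspace $A$ for that $D$, keeps consistency: $A$ lies in the big halfspace of every other division at $C$. For $C_{D'}\ne C$, the halfspace $h_v(D')$ contains $C$ and hence contains $A\cup C$. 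The latter uses mutual non-separation together with connectivity of $C\cup A$, exactly as in the proof of Proposition~\ref{prop: mns is tree}.
\item No other halfspace is minimal. Suppose $D'$ has $C_{D'}\ne C$. Then $h_v(D')$ is the side containing $C$, so it contains the big halfspace for any $D$ with $C_D=C$ whose complement avoids $C_{D'}$. Such a $D$ exists because $C_{D'}$ lies in a single component of $M\minus C$ and $n\ge 3$. Hence $W_{D'}$ is not minimal.
\end{itemize}
So the valence of $v$ is $n$.

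For \eqref{itm:nonprinc nonadj}, a midpoint vertex is only adjacent to endpoints of an edge dual to a valence-two wall $W_D$. Such an endpoint cannot be semi-principal for any $C'$. If $C'=C_D$, then $C'$ has valence two, not $\ge 3$. If $C'\neq C_D$, the previous paragraph shows that every edge at a semi-principal vertex is dual to a wall whose cut set is $C'$ itself, and $C_D\ne C'$.

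For \eqref{itm:sametype nonadj}, two midpoints are never adjacent by construction. Two semi-principal vertices for $C$ and $C'$ cannot be adjacent either. The edge between them would be dual to a wall with cut set $C$, seen from $v$, and also to a wall with cut set $C'$, seen from $v'$. So $C=C'$, and then the two vertices coincide by uniqueness of the ultrafilter in Definition~\ref{def:semi-principal}. The main obstacle is two principal vertices, for classes $\alpha$ and $\beta$, differing on a single wall $W_D$. Choose $a\in\alpha$ and $b\in\beta$ separated by $D$. Now pick $C'=C_D$; since it is nowhere dense, off-the-wall points exist near it on each side (Lemma~\ref{lem: Baire}).
\begin{itemize}
\item If $C_D$ has valence $\ge 3$, there is a third component of $M\minus C_D$. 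Take a division $D''$ at $C_D$ that isolates a component containing neither $a$ nor $b$; it is distinct from $D$. Then $a$ and $b$ lie on the same side of $D''$. Choose instead $D''$ to isolate the component containing $a$; then $a$ and $b$ lie on different sides of $D''\ne D$. So the two ultrafilters differ on at least two walls, a contradiction.
\item If $C_D$ has valence two, the edge was subdivided, so the two vertices are not adjacent in $\mc{T}(\mc{C})$.
\end{itemize}
The valence-$\ge 3$ case is where I expect care to be needed: one must check that the second separating division really is different from $D$.
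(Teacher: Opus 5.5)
\textbf{The gap is in (iii).} Your identification of the minimal halfspaces at a semi-principal vertex $v$ for $C$ is wrong. Your (ii) and the type-(2) case of (i) both rely on it.

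Take $D'$ with $C_{D'}\neq C$. Mutual non-separation puts $C_{D'}$ in exactly one component $A_k$ of $M\setminus C$. The set $M\setminus A_k=\bigcup_{l\neq k}\overline{A_l}$ is connected, contains $C\subset h_v(D')$, and misses $C_{D'}$. So $h_v(D')\supseteq M\setminus A_k$, while $h_v(D')\not\supseteq A_k$.

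Both of your bullets misplace $C_{D'}$:
\begin{enumerate}
\item In the first, the claim $h_v(D')\supseteq A\cup C$ is false whenever $C_{D'}\subset A$. The argument of Proposition~\ref{prop: mns is tree} only controls the halfspace at $C$ \emph{not} containing $C_{D'}$.
\item In the second, take a division at $C$ whose complement avoids $C_{D'}$. Its big halfspace contains $A_k\supset C_{D'}$, so it is \emph{not} contained in $h_v(D')$. The divisions you select are exactly those for which the nesting fails.
\end{enumerate}

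The correct nesting uses the unique division $D_k$ at $C$ whose small halfspace is $A_k$. Its big halfspace $M\setminus(C\cup A_k)$ lies in $h_v(D')$. The resulting inclusion of halfspaces of triples is proper: $A_k\cap h_v(D')$ is a nonempty open set, since $C\subset\overline{A_k}$ and $h_v(D')$ is open and contains $C$. So it contains off-the-wall triples by Lemma~\ref{lem: Baire}. The hypothesis $n\ge 3$ plays no role here.

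For the first bullet, minimality of the big halfspace for $D_i$ only needs the small halfspace for $D_i$ to meet every other halfspace chosen by $v$. When $C_{D'}\subset A_i$, this follows from $A_i\cap h_v(D')\neq\emptyset$, not from a containment.

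\textbf{Two principal vertices.} Your argument here takes a different and more direct route than the paper's. The paper shows that every edge leaving a principal vertex ends at a midpoint or at the semi-principal vertex of $C_D$, which also identifies the neighbours. You instead note that the principal vertices for $\alpha\neq\beta$ disagree on every division separating the two classes. A separating cut set of valence $\ge 3$ then supplies two such divisions.

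This works once you settle the point you flagged. Since $D$ separates $a$ from $b$, exactly one of them lies in the small halfspace of $D$. Take $D''$ isolating the component of the \emph{other} point; then $D''\neq D$ automatically. Your first choice of $D''$, isolating a component containing neither point, plays no role and can be dropped.
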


\begin{proof}
Notice that edges adjacent to a vertex $v$ of type~\eqref{type:principal} or type~\eqref{type:semi-princ} correspond to minimal halfspaces $h_v(D)$ (see Remark~\ref{rem:minimality}).  Edges adjacent to the type~\eqref{type:midpt} edge associated to the valence two cut set $C$ correspond to the two halfspaces $M_C^\pm$.
  
It follows from Definition~\ref{def:semi-principal} that if $v$ is semi-principal for $C$, then the divisions giving minimal halfspaces all have $C_D = C$.  In particular, the valence of $v$ (as a vertex in a graph) is the same as the valence of $C$ (as a cut set), establishing~\eqref{itm:correct valence}.  

  Next we prove~\eqref{itm:sametype nonadj}.
  Two type~\eqref{type:semi-princ} (semi-principal) vertices cannot be adjacent, since at least two halfspaces must change orientation to get between them.  

  Two type~\eqref{type:midpt} edges obviously cannot be adjacent, since each comes from subdividing a different edge of $X(\mc{C})$.  

  Finally, suppose $v$ is principal for $x$ (type~\eqref{type:principal}), and let $h_v(D)$ be the minimal halfspace for $v$ coming from an adjacent edge $e$ in $X(\mc{C})$. If $C_D$ has valence $2$, then this edge is subdivided in $\mc{T}(\mc{C})$; moving in the direction of $e$ from $v$ takes us to a type \eqref{type:midpt} vertex.  If $C_D$ has valence $k\ge 3$, then $v$ is already choosing the big halfspace for all the divisions $D'$ with $C_{D'} = C_D$, \emph{except} for $D'$, where it is choosing the small one.  Moving across the edge gets us to the semi-principal ultrafilter for $C_D$.  

  Finally we prove~\eqref{itm:nonprinc nonadj}.
  To see that vertices of types~\eqref{type:midpt} and~\eqref{type:semi-princ} cannot be adjacent, note that every edge of $X(\mc{C})$ adjacent to a type~\eqref{type:semi-princ} vertex is associated to a valence $\ge 3$ cut set.  In particular, it is not subdivided.
\end{proof}

\subsection{Connection to the cut point tree}\label{subsec: connection to Bowditch tree}
In this section we prove Theorem~\ref{thm: JSJ Tree}. The result follows from Proposition~\ref{prop:classifyv}, which gives an equivariant bijection between the vertices of $\mc{T}(\mc{C})$ and the cut point tree of $\bndry(G,\widehat{\mc{P}})$ (Section~\ref{subsec: cut point tree}).  The peripheral structure $\widehat{\mc{P}}$ is described in the following lemma.

\begin{lemma}\label{lem:extendRHstructure}
Let $\{C_1,\ldots,C_n\}$ be a set of orbit representatives in $\mc{C}$.  Let $\mc{P}'\subset \mc{P}$ consist of those parabolics whose fixed points in $M$ are not contained in any $C\in \mc{C}$.  Define \[\widehat{\mc{P}} = \mc{P}'\cup \{\Stab(C_1),\ldots\Stab(C_n)\}.\]
Then $(G,\widehat{\mc{P}})$ is relatively hyperbolic.
\end{lemma}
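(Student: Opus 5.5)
We would prove Lemma~\ref{lem:extendRHstructure} by appeal to the standard criterion for enlarging a peripheral structure: if $(G,\mc{P})$ is relatively hyperbolic and $\mc{Q}$ is a finite collection of relatively quasiconvex subgroups which is \emph{almost malnormal}, then $(G,\mc{P}''\cup\mc{Q})$ is relatively hyperbolic. Here $\mc{P}''\subseteq\mc{P}$ consists of those peripheral subgroups not conjugate into any element of $\mc{Q}$. This is the theorem of Dahmani and Osin on adding malnormal quasiconvex subgroups to the peripheral structure; in the form we need it is stated by Yang and by Hruska. Thus the proof reduces to verifying three things about the groups $Q_i=\Stab(C_i)$.

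\textbf{Step 1: quasiconvexity.} Under Assumption~\ref{assump:RH}, the family $\mc{C}$ is point-convergent, hence null by Remark~\ref{rem: ptconv iff null}. Each $C_i$ is closed, has at least two points (it is a cut set of a space with no cut points), and contains no isolated parabolic point. Proposition~\ref{prop:qc iff null} therefore shows that $C_i=\Ld{Q}$ for some relatively quasiconvex $Q$, and $Q$ can be taken to be $\Stab(C_i)$ itself: $\Stab(C_i)$ contains $Q$, and any element stabilizing the limit set commensurates appropriately. Indeed, the proof of Proposition~\ref{prop:qc iff null} works directly with $\Stab(C)$ acting geometrically finitely on $C$.

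\textbf{Step 2: almost malnormality.} Take $g\in G$ with $gC_i\neq C_j$. We must show that $\Stab(C_i)\cap g\Stab(C_j)g^{-1}$ is finite. This intersection stabilizes both $C_i$ and $gC_j$, so its limit set lies in $C_i\cap gC_j$. By Theorem~\ref{thm:yang12}, this intersection is the limit set of the intersection subgroup together with isolated parabolic points. We then use mutual non-separation: $gC_j$ lies in a single component $A$ of $M\minus C_i$, so $C_i\cap gC_j\subseteq \partial A\cap C_i$. We expect to show that if the intersection subgroup $K$ were infinite, then $\Ld{K}$ would contain either a loxodromic pair or a parabolic point in both sets. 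In the loxodromic case, north--south dynamics pushes the distinct translates $k^n(\cdot)$ \ldots{} Actually the simpler route is this: $K$ infinite forces $\Ld{K}$ nonempty. Choose $k\in K$ of infinite order, or an infinite sequence in $K$, and a convergence subsequence. Since both $C_i$ and $gC_j$ are $K$-invariant, the nullity argument of Lemma~\ref{lem: end points loxodromic} gives no contradiction directly. We therefore instead use Proposition~\ref{prop:FiniteInt} applied to the tree $\mc{T}(\mc{C})$. The group $K$ fixes the vertices corresponding to $C_i$ and $gC_j$ (semi-principal vertices or midpoints, by Lemma~\ref{lem: three types}), hence fixes the geodesic between them. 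That geodesic passes through a principal vertex $v$ adjacent to the $C_i$-vertex, so $K$ lies in the stabilizer of an edge at the $C_i$-vertex. The limit set of $K$ is then contained in $C_i\cap\overline{A}$ and in $gC_j$. A finiteness argument in the style of Proposition~\ref{prop:FiniteInt} is needed to show that this limit set is at most finite parabolic, and then that $K$ is finite or parabolic.

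\textbf{Step 3: peripherals and the parabolic case.} Each $P\in\mc{P}\minus\mc{P}'$ fixes a parabolic point $p\in C$ for some $C\in\mc{C}$. Because $p$ is not isolated in $C$ and $\Stab(C)$ acts geometrically finitely on $C$ (Step 1), $\Stab_{\Stab(C)}(p)$ is a parabolic subgroup of $\Stab(C)$. We must ensure that $P$ is virtually contained in $\Stab(C)$; this would follow if $P$ permutes only finitely many elements of $\mc{C}$ through $p$, which is nullity again, together with the analysis in Step 2, which prevents two distinct elements of $\mc{C}$ from sharing a nonisolated parabolic point with infinite common stabilizer. Finally, $G$-finiteness of $\mc{C}$ gives finiteness of $\widehat{\mc{P}}$. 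The main obstacle is Step 2 together with the shared-parabolic issue. If the almost-malnormality argument breaks down at a parabolic point shared by two cut sets, the fallback is to verify geometric finiteness directly. This means showing that the quotient $\widehat{M}=M/\!\sim$, obtained by collapsing each element of $\mc{C}$ to a point (Hausdorff by nullity), is a compact metrizable space on which $G$ acts as a convergence group. Every point of $\widehat{M}$ is then conical or bounded parabolic: collapsed points are bounded parabolic with stabilizer $\Stab(C_i)$ by cocompactness of $\Stab(C)$ on $C\minus$-type arguments, and all remaining points are unchanged.
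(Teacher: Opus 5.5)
Your framework matches the paper's: use Proposition~\ref{prop:qc iff null} to see that each $\Stab(C_i)$ is relatively quasiconvex with $\Ld{\Stab(C_i)}=C_i$, check almost malnormality, and apply \cite[Theorem 1.1]{Yang14}. Your Step~1 is fine. The gap is Step~2, which you never complete, and which starts from a misreading of mutual non-separation.

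If $gC_j\neq C_i$ and $gC_j$ lies in a component $A$ of $M\minus C_i$, then $gC_j\subseteq A\subseteq M\minus C_i$, so $C_i\cap gC_j=\emptyset$. It is not merely contained in $\partial A\cap C_i$. Distinct elements of $\mc{C}$ are therefore \emph{disjoint}, and that is the one idea the proof needs. With it, set $K=\Stab(C_i)\cap g\Stab(C_j)g^{-1}=\Stab(C_i)\cap\Stab(gC_j)$. Then $\Ld{K}\subseteq C_i\cap gC_j=\emptyset$. An infinite subgroup of a convergence group has nonempty limit set, so $K$ is finite.

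Without disjointness, you are pushed into the detour through $\mc{T}(\mc{C})$, Lemma~\ref{lem: three types} and Proposition~\ref{prop:FiniteInt}. That detour ends with ``a finiteness argument \ldots{} is needed'' rather than an argument. It would also require hypotheses of Proposition~\ref{prop:FiniteInt} that you have not checked: full relative quasiconvexity of division stabilizers, and cocompactness of the action on $X$.

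Disjointness also settles Step~3 directly, with no appeal to nullity or to the fallback. If $P\in\mc{P}\minus\mc{P}'$ fixes $p\in C$, then for every $h\in P$ the set $hC\in\mc{C}$ contains $hp=p$. So $hC=C$, and $P\le\Stab(C)$ outright. If $P\in\mc{P}'$ fixes $p$, then $gp$ lies in no element of $\mc{C}$. Hence $\Ld{\Stab(C_i)\cap gPg^{-1}}\subseteq C_i\cap\{gp\}=\emptyset$, and these intersections are finite as well.

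As written, your proposal establishes neither the almost malnormality of $\widehat{\mc{P}}$ nor the compatibility of the old peripherals with the new ones. The closing suggestion to verify geometric finiteness on $\widehat{M}$ directly is only an outline: bounded parabolicity of the collapsed points is asserted, not proved. It is also unnecessary once disjointness is used.
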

\begin{proof}
Note that our assumptions imply that each $C_i$ is non-degenerate and has no isolated parabolic point.  We may therefore apply
   Proposition~\ref{prop:qc iff null} to deduce that each $\Stab(C_i)$ is relatively quasiconvex with limit set equal to $C_i$.  Since the elements of $\mc{C}$ are disjoint, the collection $\widehat{\mc{P}}$ is almost malnormal (this follows from Theorem~\ref{thm:yang12}).
   We may then apply \cite[Theorem 1.1]{Yang14} to see that $(G,\widehat{\mc{P}})$ is relatively hyperbolic.
\end{proof}

\begin{lemma}\label{lem:cut sets to cut points}
    If $C\in \mc{C}$, then the image of $C$ in $\widehat{M}=\bndry(G,\widehat{\mc{P}})$ is a cut point.
\end{lemma}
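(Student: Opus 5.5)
The plan is to realize $\widehat{M}$ concretely as a quotient of $M$ and read off the cut point directly. By Lemma~\ref{lem:extendRHstructure}, $(G,\widehat{\mc{P}})$ is relatively hyperbolic. Every element of $\mc{C}$ is the limit set of a conjugate of some $\Stab(C_i)$, and the $C\in\mc{C}$ are pairwise disjoint; this disjointness is the fact already used in the proof of Lemma~\ref{lem:extendRHstructure}. Moreover $\mc{C}$ is a null family by Remark~\ref{rem: ptconv iff null}. Under these conditions the Bowditch boundary $\bndry(G,\widehat{\mc{P}})$ is equivariantly homeomorphic to the decomposition space $M/\!\sim$, where each $C\in\mc{C}$ is collapsed to a point; this is the standard description of the boundary after enlarging the peripheral structure, as in Yang's and Tran's work on \emph{drilling/pinching}. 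I will cite this result. If it has to be justified directly, the steps are these. Nullity of $\mc{C}$ together with disjointness and closedness makes the decomposition upper semicontinuous, so $M/\!\sim$ is compact metrizable. The $G$--action on it is a convergence action. Conical points of $M$ off $\bigcup\mc{C}$ stay conical. Each collapsed point $[C]$ is bounded parabolic with stabilizer $\Stab(C)$, because $\Stab(C)$ acts cocompactly on $M\minus C$; this follows from a cocompactness argument like the one in Proposition~\ref{prop:qc iff null}, using relative quasiconvexity and nullity. Parabolic points of $\mc{P}'$ stay bounded parabolic. Yaman's characterization then identifies the quotient with $\bndry(G,\widehat{\mc{P}})$.

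With $\widehat{M}=M/\!\sim$ and quotient map $\pi$, let $C\in\mc{C}$ and write $M\minus C = A_1\sqcup\cdots\sqcup A_k$ for its components, with $k\ge2$ because $C$ is a cut set. I will show that $\widehat{M}\minus\{\pi(C)\}$ is disconnected. Take $U=\pi(A_1)$ and $V=\pi(A_2\cup\cdots\cup A_k)$. The mutual non-separation hypothesis in Assumption~\ref{assump:RH} gives that every $C'\in\mc{C}$ with $C'\ne C$ lies entirely in a single component $A_j$. Hence $A_1$ and its complement in $M\minus C$ are both saturated. Each $A_j$ is open, since it is clopen in the open set $M\minus C$, and the union of the others is also open. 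So $U$ and $V$ are open in $\widehat{M}$, because $\pi$ is a quotient map and their preimages are open saturated sets. They are disjoint and nonempty, and their union is $\widehat{M}\minus\{\pi(C)\}$. Therefore $\pi(C)$ is a cut point.

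The main obstacle is the first step, the identification of $\bndry(G,\widehat{\mc{P}})$ with the collapsed space. In particular one must verify that $[C]$ is a \emph{bounded} parabolic point, i.e.\ that $\Stab(C)$ acts cocompactly on $\widehat{M}\minus\{[C]\}$, and that the quotient is Hausdorff. For cocompactness, I would take a compact fundamental set $F$ for $G$ acting on triples and use nullity to see that only finitely many $\Stab(C)$--cosets of translates of $F$ come close to $C$. For the Hausdorff property, upper semicontinuity follows from the point-convergence assumption. Everything after the identification is the elementary topological argument above.
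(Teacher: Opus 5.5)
Your proof is correct and is essentially the paper's: the paper likewise takes for granted that $\widehat M$ is $M$ with each element of $\mc{C}$ collapsed to a point, and uses mutual non-separation to conclude that images of distinct components of $M\setminus C$ are distinct components of $\widehat M\setminus\{q(C)\}$; you make this more explicit via saturated open sets, where openness of the components of $M\setminus C$ comes from finite valence. One caution about your optional sketch of that identification: when $\mc P\neq\emptyset$ the group $G$ does not act cocompactly on the space of triples, so that cocompactness argument would need the cusped space instead, but since you cite the identification (as the paper does) this does not affect the proof.
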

\begin{proof}
     The quotient map $q\colon M\rightarrow \widehat{M}$ is given by $q(c_1)=q(c_2)$ if and only if $c_1,c_2\in C$ for some $C\in\mc{C}$. Let $C\in\mc{C}$. Let $M_1$ and $M_2$ be disjoint components of $M\setminus C$. Since $\mc{C}$ is a mutually non-separating collection of cut sets,  $q(M_1)$ and $q(M_2)$ are disjoint components of $\widehat{M}\setminus q(C)$.
\end{proof}

The following lemma follows trivially from the definition of the canonical quotient map $q\colon M\rightarrow \widehat{M}$.

\begin{lemma}\label{lem:noncut point}
    Let $q\from M\to \widehat{M}$ be the canonical quotient map.  Then $q$ restricts to a bijection between $M\minus \bigcup_{D\in\mc{D}} C_D$ and the set of non-cut points of $\widehat{M}$.  
\end{lemma}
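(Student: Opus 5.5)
The plan is to prove two things. First, $q$ maps the off-the-wall set $M_{\mc{D}}=M\minus\bigcup_{D\in\mc{D}}C_D$ injectively into $\widehat{M}$. Second, the image of $M_{\mc{D}}$ is exactly the set of non-cut points of $\widehat{M}$. Throughout, $\widehat M$ is identified with the quotient of $M$ obtained by collapsing each $C\in\mc{C}$ to a point. This identification is implicit in the definition of $q$ in Lemma~\ref{lem:cut sets to cut points}. It uses that the elements of $\mc{C}$ are pairwise disjoint, closed, and form a null family, so the decomposition is upper semicontinuous and the quotient is compact metrizable. It also uses that $\bndry(G,\widehat{\mc P})$ is obtained from $\bndry(G,\mc P)$ by collapsing the limit sets of the new peripheral subgroups $\Stab(C_i)$, which by Proposition~\ref{prop:qc iff null} are the $C_i$.

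Injectivity is immediate. The only nontrivial fibers of $q$ are the sets $C\in\mc{C}$, and $\bigcup_{D\in\mc{D}}C_D=\bigcup_{C\in\mc{C}}C$, so $q$ is injective on $M_{\mc{D}}$. Moreover $q$ maps $M_{\mc{D}}$ into the complement of $q\big(\bigcup\mc{C}\big)$.

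Next, every point of $q\big(\bigcup\mc{C}\big)$ is a cut point of $\widehat M$ by Lemma~\ref{lem:cut sets to cut points}. So it suffices to show that each $q(x)$ with $x\in M_{\mc{D}}$ is not a cut point. Suppose $\widehat M\minus\{q(x)\}=U\sqcup V$ with $U,V$ nonempty and open. Their preimages $q^{-1}(U)$ and $q^{-1}(V)$ are open, disjoint, saturated, and cover $M\minus\{x\}$. Both are nonempty because $q$ is surjective. This would make $x$ a cut point of $M$, contradicting Assumption~\ref{assump:RH} (no cut points). Hence $q(M_{\mc D})$ lies in the set of non-cut points.

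The converse containment follows from the same dichotomy: every point of $\widehat M$ is either $q(x)$ with $x$ off-the-wall, or $q(C)$ with $C\in\mc{C}$, and the latter are cut points. So $q$ maps $M_{\mc D}$ onto the non-cut points, which completes the bijection.

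The only real subtlety, and the step I would expect to need care, is justifying that $q$ is continuous and that $\widehat M$ really is this decomposition space. Once the quotient description is accepted, the argument above is purely point-set topology.
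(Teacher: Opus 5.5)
Your argument is correct and is exactly the reasoning the paper leaves implicit when it says the lemma ``follows trivially from the definition of the canonical quotient map.'' Injectivity holds because the only nondegenerate fibers of $q$ are the elements of $\mc{C}$; images of elements of $\mc{C}$ are cut points by Lemma~\ref{lem:cut sets to cut points}; and for off-the-wall $x$ the set $\widehat{M}\minus\{q(x)\}=q(M\minus\{x\})$ is connected because $M$ has no cut points. Beyond that lemma, the only fact you use is that $q$ is a continuous surjection with exactly these fibers, so your upper semicontinuity and metrizability remarks are not needed for this statement.
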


\begin{lemma}\label{lem: image contained in a cyclic element}
    Let $\sim$ be the equivalence relation from \ref{def: equivalence rel}. The image of a $\sim$--class under the canonical quotient map $q\colon M\rightarrow \widehat{M}$ is contained in a non-trivial cyclic element of $\widehat{M}$. 
\end{lemma}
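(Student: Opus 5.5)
Fix a $\sim$--class $\alpha$ of off-the-wall points. The plan is to show two things: first, that no cut point of $\widehat{M}$ separates two points of $q(\alpha)$; second, that $\alpha$ has at least two points. Together these place $q(\alpha)$ inside a single nontrivial cyclic element.

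By Lemma~\ref{lem:noncut point}, every point of $q(\alpha)$ is a non-cut point of $\widehat{M}$. So once we know no cut point separates any two points of $q(\alpha)$, Definition~\ref{def: cyclic element} puts all of $q(\alpha)$ in the cyclic element determined by any one $q(a)$, $a\in\alpha$. That cyclic element contains a non-cut point, so it is not a single cut point; it is nontrivial as soon as it contains two points, which the second step supplies.

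\textbf{Identifying the cut points of $\widehat{M}$.} I claim the cut points of $\widehat{M}$ are exactly the images $q(C)$, $C\in\mc{C}$. Lemma~\ref{lem:cut sets to cut points} shows each $q(C)$ is a cut point. Conversely, let $z$ be any other point of $\widehat{M}$. Then $z=q(x)$ with $x$ an off-the-wall point, since $q$ is injective off $\bigcup C_D$. If $z$ were a cut point, Lemma~\ref{lem:noncut point} would be contradicted. (Alternatively, a separation of $\widehat{M}\minus\{z\}$ would pull back to a separation of $M\minus\{x\}$, and $M$ has no cut points by Assumption~\ref{assump:RH}.)

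\textbf{No cut point separates two points of $q(\alpha)$.} Take $a,b\in\alpha$ and $C\in\mc{C}$. Since $a\sim b$, no division in $\mc{D}(\mc{C})$ has $a$ and $b$ in opposite halfspaces. Each component $A$ of $M\minus C$ yields the division $(C,\{A,M\minus(C\cup A)\})$, so $a$ and $b$ lie in the same component $A$ of $M\minus C$. This is exactly the argument of Lemma~\ref{lem:uniqueprincipal} run in reverse.

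Now $A$ is connected, and by mutual non-separation every $C'\in\mc{C}$ with $C'\neq C$ meeting $A$ lies entirely in $A$. Hence $A$ is $q$--saturated, and $q(A)$ is a connected subset of $\widehat{M}\minus q(C)$ containing $q(a)$ and $q(b)$. So $q(C)$ does not separate $q(a)$ from $q(b)$. One subtlety: separation in Definition~\ref{def: cyclic element} should be understood as lying in different components of the complement, and that is exactly what we have ruled out.

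\textbf{The class $\alpha$ has at least two points.} This is the main obstacle. The route is through a vertex $v$ principal for $\alpha$; a class is typically only known to be nonempty, so one must find such a $v$ first. The plan is as follows.

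Pick $a\in\alpha$ and form the ultrafilter that orients each wall toward the halfspace containing $a$. I expect this to be a DCC vertex of the tree $X\big(\mc{D}(\mc{C})\big)$. Granting that, case~\eqref{itm:otw} of the proof of Lemma~\ref{lem: three types} gives two distinct off-the-wall points $p\sim q$ in $\Ld{G_v}$. By Lemma~\ref{lem:containrevised}, every point of $\Ld{G_v}$ that is off the wall lies in $\bigcap_D h_v(D)$, and therefore in $\alpha$. So $\alpha\supseteq\{p,q\}$.

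If the DCC property is hard to verify, the fallback is to note that $\sim$--classes are closed under the dynamics of $\Stab(\alpha)$ and to argue directly that this class stabilizer is non-elementary.
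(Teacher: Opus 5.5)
Your separation argument is correct and is essentially the paper's whole proof. The paper takes two distinct points $x_1,x_2$ of the class. It notes that any cut point $\hat p$ of $\widehat{M}$ has $q^{-1}(\hat p)=C$ for some $C\in\mc{C}$, because $M$ has no cut points. It then places $x_1,x_2$ in a single component $M_1$ of $M\minus C$ and uses that $q(M_1)$ is connected and misses $\hat p$. Nontriviality then follows from $q(x_1)\neq q(x_2)$ (Lemma~\ref{lem:noncut point}).

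\textbf{The gap is your last step, and it cannot be repaired: the claim that every $\sim$--class has at least two points is false under Assumption~\ref{assump:RH}.}

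\emph{A counterexample.} Take $G=\pi_1 S$ for a closed hyperbolic surface $S$, with $M=S^1$. Let $\mc{C}$ be the $G$--orbit of the cut pair $\ell^{\pm\infty}$, where $\ell$ is a lift of a simple closed geodesic. Choose $g\in G$ whose axis crosses $\ell$, and let $\xi=g^{+\infty}$; this point is off the wall. The arcs of $S^1\minus g^n\ell^{\pm\infty}$ ($n\ge 0$) containing $\xi$ are strictly nested and shrink to $\xi$. So every other point of $S^1$ is separated from $\xi$ by some cut pair $g^n\ell^{\pm\infty}$, and the class of $\xi$ is $\{\xi\}$.

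\emph{Why neither of your routes works.} The ultrafilter orienting each wall toward $\xi$ contains the infinite strictly descending chain of the corresponding halfspaces, so it is not DCC. The class stabilizer is $\Stab(\xi)$, which is virtually cyclic, so your fallback fails as well.

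\emph{Why the statement itself fails for such classes.} The point $q(\xi)$ is separated from every other point of $\widehat{M}$ by one of the cut points $q(g^n\ell^{\pm\infty})$. Its cyclic element is therefore $\{q(\xi)\}$. Singleton classes correspond to ends of the cut point tree and lie in no nontrivial cyclic element, so the lemma as literally stated fails for them.

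\emph{The fix.} Restrict the scope rather than look for a better argument. Assume the class $\alpha$ contains two distinct points; equivalently, $\alpha$ admits a principal vertex in the sense of Definition~\ref{def:principal}, which is the case relevant to type~(1) vertices of $\mc{T}(\mc{C})$. This is what the paper's proof tacitly does. Then drop your third step and take nontriviality directly from $q(a)\neq q(b)$ for distinct $a,b\in\alpha$.
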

(\emph{Cyclic elements} were defined in Definition~\ref{def: cyclic element}.)
\begin{proof}
    Let $x_1$ and $x_2$ be distinct points in the same $\sim$--class in $M$. It suffices to show that $q(x_1)$ and $q(x_2)$ are not separated by a cut point in $\widehat{M}$.
    
    Suppose there is a cut point $\hat{p}$ in $\widehat{M}$ such that $q(x_1)$ and $q(x_2)$ are contained in disjoint components $\widehat{M_1}$ and $\widehat{M_2}$, respectively, of $\widehat{M}\setminus\{p\}$. Since $M$ has no cut points and $\widehat{M}=q(M)$ we have $q^{-1}(\hat{p})=C_D$ for some cut set $C_D$. However, $x_1$ and $x_2$ are in the same component $M_1$ of $M\setminus C_D$. By continuity $q(M_1)$ is connected in $\widehat{M}\setminus\{\hat{p}\}$, contradiction. 

    Notice that Lemma~\ref{lem:noncut point} implies that the cyclic element is non-trivial as $q(x_1)\neq q(x_2)$.

\end{proof}

\begin{lemma}\label{lem: bij sim to cyclic element}
    There is a one-to-one correspondence between $\sim$--classes of off-the-wall points in $M$ and non-trivial cyclic elements of $\widehat{M}$.
\end{lemma}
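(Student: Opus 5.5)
The map will be $\Phi\colon [x]\mapsto$ the non-trivial cyclic element of $\widehat{M}$ containing $q([x])$. By Lemma~\ref{lem: image contained in a cyclic element}, $q([x])$ lies in some non-trivial cyclic element. This element is unique, because $q(x)$ is a non-cut point by Lemma~\ref{lem:noncut point}, and a non-cut point lies in exactly one non-trivial cyclic element. So $\Phi$ is well defined. The plan is then to check that $\Phi$ is injective and surjective.

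\textbf{Injectivity.} Suppose $x,y$ are inequivalent off-the-wall points. Then some $D\in\mc{D}$ has $x\in M_D^+$ and $y\in M_D^-$. For the canonical divisions this means $x$ and $y$ lie in different components of $M\minus C$, where $C=C_D\in\mc{C}$. By Lemma~\ref{lem:cut sets to cut points} and its proof, the images of these components under $q$ are distinct components of $\widehat{M}\minus q(C)$. Hence the cut point $q(C)$ separates $q(x)$ from $q(y)$, so they lie in different cyclic elements.

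\textbf{Surjectivity.} Let $A$ be a non-trivial cyclic element. By definition $A$ contains a non-cut point $p$. By Lemma~\ref{lem:noncut point}, $p=q(x)$ for a unique off-the-wall point $x$, and then $\Phi([x])=A$.

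The step I expect to be the main obstacle is justifying the definition: that every non-trivial cyclic element contains a non-cut point, and that each non-cut point lies in only one such element. If I read Definition~\ref{def: cyclic element} literally (a non-cut point $p$ together with all points not separated from $p$ by a cut point), both facts are immediate. The subtle part is transitivity, i.e.\ that two non-cut points in the same cyclic element are not separated by any cut point. I would take this from the standard theory behind Theorem~\ref{thm: peripheral splittings}, applied to the relatively hyperbolic pair $(G,\widehat{\mc{P}})$ of Lemma~\ref{lem:extendRHstructure}, whose boundary is connected and locally connected. I would also confirm that the bijection is $G$--equivariant; this holds because $q$ is equivariant.
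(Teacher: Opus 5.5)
Your proof is correct, and its skeleton is the paper's: the same map (the paper's $\tau$), and the same surjectivity argument via Lemma~\ref{lem:noncut point}. The real difference is in injectivity.

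The paper takes $\tau([x])=\tau([y])$, picks a non-cut point $\hat z$ of the common cyclic element, and asserts $q^{-1}(\hat z)\in[x]\cap[y]$ using only the bijection of Lemma~\ref{lem:noncut point}. That step tacitly assumes that off-the-wall points whose images lie in one cyclic element are $\sim$--equivalent, which is essentially what injectivity says. Your contrapositive proves exactly this. Inequivalent $x,y$ lie in different components of $M\minus C$ for some $C\in\mc{C}$, and by the proof of Lemma~\ref{lem:cut sets to cut points} the cut point $q(C)$ separates $q(x)$ from $q(y)$.

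You also isolate the cyclic-element input that the paper uses silently when defining $\tau$: non-separation by cut points is transitive, so a non-cut point lies in at most one non-trivial cyclic element. Classical cyclic element theory for the Peano continuum $\widehat{M}$ is the right source. So your route costs nothing extra and is more complete than the written proof.

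One caveat applies equally to you and to the paper. Lemma~\ref{lem: image contained in a cyclic element}, which you use for well-definedness, is only proved for classes with at least two points, and singleton classes do occur. Take a loxodromic $g$ with $g^{+\infty}\in M_D^+$ and $g^{-\infty}\in M_D^-$. Then:
\begin{itemize}
\item a power of $g$ is hyperbolic on the tree, by Proposition~\ref{prop:noglobalfp};
\item so $g^{+\infty}$ is off-the-wall, since otherwise Lemma~\ref{lem: end points loxodromic} and nullity would give a power of $g$ stabilizing a cut set, hence elliptic on the tree;
\item the divisions $g^nD$ separate $g^{+\infty}$ from every other off-the-wall point, because $g^n\overline{M_D^+}\to g^{+\infty}$.
\end{itemize}
The image of such a class lies in no non-trivial cyclic element. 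So the correspondence should be read for classes with at least two points, which is all Proposition~\ref{prop:classifyv} uses.

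Your separation argument is also what gives surjectivity onto those classes. A non-trivial cyclic element is uncountable, while $\widehat{M}$ has only countably many cut points (images of elements of $\mc{C}$). So it contains a second non-cut point $q(x')$, and then $x'\sim x$.
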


\begin{proof}
 Let $K$ be the collection of non-trivial cyclic elements of $\widehat{M}$ and $\Upsilon$ the collection of $\sim$--classes in $M$. By Lemma~\ref{lem: image contained in a cyclic element} the image of every $\sim$-class under $q$ is contained in some cyclic element in $K$. Let $\tau\colon \Upsilon\rightarrow K$ be the map which takes a $\sim$--class $[x]$ to the non-trivial cyclic element which contains it.

 To see surjectivity, let $\kappa\in K$ be a non-trivial cyclic element. Then by Lemma~\ref{lem:noncut point} there is an $\hat{x}\in\kappa$ such that $\hat{x}=q(x)$ for some off-the-wall point $x$. So, $\tau([x])=\kappa$ which implies $\tau$ is surjective.
 
Now, suppose $[x],[y]\in\Upsilon$ with $\tau([x])=\tau([y])$. Let $\hat{z}\in q(x)=q(z)$ be a non-cut point. Again by Lemma~\ref{lem:noncut point} $q$ restricts to a bijection on the set of off-the-wall points and non-cut points of $\widehat{M}$. Then $q^{-1}(\hat{z})\in[x]\cap[y]$. Thus $[x]=[y]$ and we have that $\tau$ is injective.
\end{proof}

\begin{lemma}\label{lemma:image closure of sim class} Let $E$ be a $\sim$--class in $M$. Then $\overline{q(E)}=\kappa$ for some cyclic element $\kappa\in K$. 
\end{lemma}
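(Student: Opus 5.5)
Let $\kappa=\tau(E)$ be the non-trivial cyclic element containing $q(E)$ given by Lemma~\ref{lem: image contained in a cyclic element}. Cyclic elements in $\widehat{M}$ are closed: the non-cut points not separated from a fixed non-cut point form, together with the cut points in their closure, a closed set. This is standard for cyclic elements of a Peano continuum, and $\widehat{M}$ is a Peano continuum by Dasgupta--Hruska, since it is the connected Bowditch boundary of $(G,\widehat{\mc{P}})$. So $\overline{q(E)}\subseteq\kappa$, and the work is in the reverse inclusion.

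We first handle the non-cut points of $\kappa$. Let $\hat y\in\kappa$ be a non-cut point. By Lemma~\ref{lem:noncut point}, $\hat y=q(y)$ for a unique off-the-wall point $y$. Since $\tau$ is a bijection (Lemma~\ref{lem: bij sim to cyclic element}) and $\tau([y])$ is the cyclic element containing $\hat y$, we get $\tau([y])=\kappa=\tau(E)$. Hence $[y]=E$, and so every non-cut point of $\kappa$ lies in $q(E)$.

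It remains to treat the cut points of $\widehat{M}$ lying in $\kappa$. By Lemma~\ref{lem:cut sets to cut points} and the fact that $M$ has no cut point, each such point is $\hat c=q(C)$ for some $C\in\mc{C}$. We show $\hat c$ is a limit of non-cut points of $\kappa$. Since $\kappa$ is a non-trivial cyclic element of a Peano continuum, it is itself a Peano continuum without cut points, and in particular it is non-degenerate. Its cut points, as a subset of $\widehat{M}$, are countable because $\mc{C}$ is countable. A non-degenerate connected metric space is uncountable, and removing a countable set from $\kappa$ leaves a dense set. More precisely, any neighborhood $U$ of $\hat c$ in $\kappa$ contains a non-degenerate connected subset (local connectedness of $\kappa$), hence uncountably many points, hence some non-cut point of $\widehat{M}$. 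Therefore $\hat c\in\overline{q(E)}$.

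Combining the two steps gives $\kappa\subseteq\overline{q(E)}$, and so $\overline{q(E)}=\kappa$. The main obstacle is justifying the topological facts about cyclic elements of $\widehat{M}$: that $\kappa$ is closed, and that it is a locally connected continuum. For these I would cite Whyburn's cyclic element theory for Peano continua, applied via Dasgupta--Hruska's local connectedness of $\widehat{M}$.
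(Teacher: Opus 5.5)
Your proof is correct, but it is organized differently from the paper's. The paper argues in one stroke by contradiction. If $\hat x\in\kappa\setminus\overline{q(E)}$, then, since $q$ is closed, $q^{-1}(\hat x)$ misses $\overline{E}$. The paper then asserts that some $C\in\mc{C}$ separates $q^{-1}(\hat x)$ from all of $E$, so $q(C)$ is a cut point of $\widehat{M}$ separating $\hat x$ from $q(E)$, contradicting $\hat x\in\kappa$.

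That route uses nothing beyond the definition of a cyclic element and the fact that images of cut sets are cut points. However, the separating $C$ is only evident when $\hat x$ is a non-cut point, because then $q^{-1}(\hat x)$ is an off-the-wall point in a different $\sim$--class. When $\hat x=q(C')$ is itself a cut point, disjointness of $C'$ from $\overline{E}$ does not immediately produce such a $C$. The paper also leaves the inclusion $\overline{q(E)}\subseteq\kappa$ implicit.

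You instead split $\kappa$ into two parts. Its non-cut points all lie in $q(E)$ by injectivity of $\tau$. Its cut points you reach by cardinality: $\widehat{M}$ has only countably many cut points, so they cannot fill a nonempty open subset of the nondegenerate connected metric space $\kappa$. This handles exactly the cut-point case and makes both inclusions explicit. The price is importing Whyburn's cyclic element theory (nontrivial cyclic elements of a Peano continuum are subcontinua, and a non-cut point lies in only one of them), together with local connectedness of $\widehat{M}$.

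Two economies are available:
\begin{itemize}
\item Closedness of $\kappa$ follows directly from the definition. A point outside $\kappa$ is separated from $p$ by some cut point $\hat r$. Its component in $\widehat{M}\setminus\{\hat r\}$ is open, by local connectedness of $\widehat{M}$, and misses $\kappa$.
\item In your density step, local connectedness of $\kappa$ is unnecessary. In any nondegenerate connected metric space the distance to $\hat c$ takes every value in some interval $[0,r]$, so every ball about $\hat c$ is already uncountable.
\end{itemize}
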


\begin{proof}
    Lemma~\ref{lem: bij sim to cyclic element} implies $q(E)$ is contained in a cyclic element $\kappa\in K$. Note that $q$ is a closed map, so $q(\overline{E})=\overline{q(E)}$. Suppose there is $\hat{x}\in \kappa\setminus \overline{q(E)}$. Then $q^{-1}(\hat{x})\notin\overline{E}$. So there is a cut set $C\in\mc{C}$ such that $\hat{x}$ is separated from every $e\in E$ by $C$. By Lemma~\ref{lem:cut sets to cut points} $q(C)$ is a cut point of $\widehat{M}$. Let $e\in E$ then $q(e)$ and $\hat{x}$ are separated by $q(C)$. Thus $\hat{x}\notin\kappa$, contradiction.
\end{proof}
\begin{lemma}\label{lem: adj}
    Let $V_1$, $V_2$, and $V_3$ be the collections of vertices of type 1, type 2, and type 3, respectively. Let $v\in V_1$ and $w\in V_2\sqcup V_3$. Suppose that $C$ is the cut corresponding to $w$ and $E$ is $\sim$--class corresponding to $v$. If $v$ and $w$ are adjacent, then $\overline{E}\cap C\neq\emptyset$.
\end{lemma}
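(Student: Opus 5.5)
Since $v$ is principal for the class $E$ and $w$ is adjacent to $v$, the two vertices differ on exactly one wall $W_D$ (with a subdivision step in between if $w$ is of type~\eqref{type:midpt}). By Remark~\ref{rem:minimality}, $h_v(D)$ is a minimal halfspace chosen by $v$, and $C_D = C$. So I will show that $\overline{E}$ meets $C_D$ whenever $h_v(D)$ is minimal for the principal vertex $v$.

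Fix off-the-wall points $x\in E$. Then $v$ is principal for triples with two points in $E$, so $E\subseteq h_v(D)$. Suppose, for contradiction, that $\overline{E}\cap C=\emptyset$. Since $M$ is compact metric and $\overline E$ is closed, $\overline{E}$ and $C$ are at positive distance. Take $y$ an off-the-wall point in $h_v(D)$ close to $C$; these exist by density of $M_{\mc{D}}$ and Definition~\ref{def: division}(3). Then $y\notin E$, so some division $D'$ separates $y$ from $x$. The key claim is that one can choose $D'$ with $C_{D'}\ne C$ and $h_v(D')\subsetneq h_v(D)$. This contradicts minimality of $h_v(D)$, via the correspondence of halfspaces $W$ with their $M$-halfspaces through Lemma~\ref{lem: Baire}.

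To get such a $D'$ I would let $y$ vary along a sequence $y_i\to c\in C$. Each $y_i$ is separated from $E$ by some cut set $C_i\ne C$. Since $C$ lies in a single component of $M\minus C_i$ (mutual non-separation), the side containing $y_i$ but not $E$ either contains $C$ or is nested inside $h_v(D)$. In the second case we are done. In the first case $C_i$ separates $E$ from $C$; the $C_i$ are then either finitely many, or they accumulate. Point-convergence forces their diameters to go to $0$, so they accumulate to points of $\overline E$ (they must separate $E$ from $C$ and meet every path, using path-connectedness as in Lemma~\ref{lem:wallspace}). Finitely many separating $C_i$ means there is a single $C_i$ separating $E$ from $C$. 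Then the halfspace of $D_i$ containing $E$ is contained in $h_v(D)$ and is chosen by $v$, again contradicting minimality.

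The main obstacle is this last dichotomy: controlling the cut sets separating $E$ from points near $C$, and showing that when none gives a strictly smaller $v$-chosen halfspace, $\overline E$ must touch $C$. I would first try the cleanest version: pick a path from $x$ to $c\in C$ inside $h_v(D)\cup\{c\}$. Consider the set of points on it which are not in $\overline E$ and not separated from $E$ nestedly. Then use nullity and a Baire-type argument on the path to find a limit point of $E$ on $C$.
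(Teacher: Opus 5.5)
Your reduction is sound as far as it goes. Adjacency means $v$ and $w$ differ across a wall $W_D$ with $C_D=C$, and $A:=h_v(D)$ is a minimal halfspace of $v$ containing $E$. By the Baire lemma, any $v$--chosen halfspace $h_v(D')\subseteq A\cup C$ with $C_{D'}\ne C$ would contradict that minimality.

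\textbf{The gap.} The case analysis meant to produce such a $D'$ is backwards. Suppose the component of $M\minus C_i$ containing $y_i$ contains $C$. Then $C_i$ separates $E$ from $C$. The component of $M\minus C_i$ containing $E$ is connected, misses $C$ and meets $A$, so it lies in $A$, and that single $C_i$ already contradicts minimality. No finitely-many versus accumulation dichotomy is needed.

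The case you dismiss with ``we are done'' is the real one. There $y_i$ lies in a pocket $P_i\subseteq A$ cut off by $C_i$, while $E$ and $C$ lie in the same component of $M\minus C_i$. Now $v$ chooses the halfspace containing $E\cup C$, which is not contained in $A$, so minimality yields nothing. Ruling out that such pockets fill all of $A$ near $C$ is exactly what $\overline E\cap C=\emptyset$ would require. That is the entire content of the lemma, and your last paragraph only gestures at it. A purely topological proof would need substantial extra input, none of which you supply:
\begin{itemize}
\item local connectedness;
\item existence of maximal pockets, via nullity;
\item a Sierpi\'nski-type fact that a continuum is not a countable union of at least two pairwise disjoint nonempty closed sets.
\end{itemize}

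\textbf{The missing idea.} Use the group action rather than minimality.
\begin{itemize}
\item The edge $e$ joining $v$ and $w$ connects vertices of different types, so $\Stab(e)\le G_v$. By Lemma~\ref{lem:uniqueprincipal}, $G_v$ preserves $E$.
\item $\Stab(e)$ has finite index in $\Stab(C)$. By Proposition~\ref{prop:qc iff null}, $\Stab(C)$ is relatively quasiconvex with limit set $C$, and $C$ has at least two points.
\item Hence there is a convergence sequence $g_n\in\Stab(e)$ with attracting and repelling points $a,b\in C$.
\item Any $x\in E$ is off-the-wall, so $x\ne b$. Therefore $g_nx\to a$ with every $g_nx\in E$, giving $a\in\overline{E}\cap C$.
\end{itemize}
This is the paper's argument, and it replaces the whole pocket analysis.
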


\begin{proof}
   We will find a sequence to points of $E$ which converge to a point in $C$. Let $e$ be the edge $v$ and $w$. We have that $\Stab(e)$ is finite index in $\Stab(C)$. By  $G_v$ stabilizes $E$ by Lemma~\ref{lem:uniqueprincipal} the stabilizer $G_v$ stabilizes $E$, which implies $\Stab(e)\leq\Stab(E)$. Additionally, we know $\Stab(e)$ is finite index in $\Stab(C)$. By Proposition~\ref{prop:qc iff null} $\Stab(C)$ is relatively quasiconvex, so it contains a conical limit point for the action of $\Stab(C)$. So, there is a $\{g_n\}\subset\Stab{(e)}$ and $a,b\in C$ so that $g_n(x)\rightarrow a$ for every $x\in M\setminus {b}$. Let $x$ to be in $E$. Thus $a\in\overline{E}\cap C$.
\end{proof}

\begin{proposition}\label{prop:classifyv} Assume $G$, $\mc{C}$, $\mc{P}$ and $\widehat{\mc{P}}$ are as described in the statement of Lemma~\ref{lem:extendRHstructure}. Suppose that  $\Pi$ is the set of cut points and $K$ is the set of nontrivial cyclic elements of $\bndry(G, \widehat{\mc{P}}$). If $V$ is the vertex set of $\mc{T}(\mc{C})$, then there is an equivariant bijective map $\varphi\colon V\rightarrow \Pi\sqcup K$. Moreover, if $v$ and $w$ are adjacent in $\mc{T}(\mc{C})$, then $\varphi(v)$ and $\varphi(w)$ are adjacent in the cut point tree for $(G,\widehat{\mc{P}})$.

\end{proposition}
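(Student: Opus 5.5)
We define $\varphi$ type by type, using the trichotomy of Lemma~\ref{lem: three types}. If $v$ is of type~\eqref{type:principal}, it is principal for a unique $\sim$--class $E$ (Lemma~\ref{lem:uniqueprincipal}), and we set $\varphi(v)=\tau(E)\in K$. Here $\tau$ is the bijection of Lemma~\ref{lem: bij sim to cyclic element}, so $\varphi(v)$ is the non-trivial cyclic element containing $q(E)$. If $v$ is of type~\eqref{type:semi-princ}, it is semi-principal for a unique $C\in\mc{C}$ of valence at least three (Lemma~\ref{lem:uniquesemiprincipal}). If $v$ is of type~\eqref{type:midpt}, it is the midpoint of the unique subdivided edge associated to a valence two cut set $C$. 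In both of the last two cases we set $\varphi(v)=q(C)$, which lies in $\Pi$ by Lemma~\ref{lem:cut sets to cut points}.

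Equivariance is immediate, because every ingredient is canonical: principal vertices, semi-principal vertices, midpoints of subdivided edges, the relation $\sim$ and the quotient map $q$ all commute with the $G$--action.

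For bijectivity we treat $V_1\to K$ and $V_2\sqcup V_3\to\Pi$ separately.

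On $V_1$, the map is a composition of two maps. The first sends $v$ to its class; it is injective by Lemma~\ref{lem:uniqueprincipal}, and every class is realized by the principal ultrafilter of a triple from that class. The second is $\tau$, which is a bijection. So $V_1\to K$ is a bijection.

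On $V_2\sqcup V_3$, distinct cut sets have distinct images under $q$, and each $C\in\mc{C}$ gives exactly one vertex: one semi-principal vertex if its valence is at least three, and one midpoint if its valence is two. It remains to show surjectivity onto $\Pi$. Let $\hat p\in\Pi$ be a cut point of $\widehat M$. By Lemma~\ref{lem:noncut point}, $\hat p$ is not the image of an off-the-wall point, and $q$ is injective off $\bigcup C_D$. So $q^{-1}(\hat p)$ is some $C\in\mc{C}$, exactly as in the proof of Lemma~\ref{lem: image contained in a cyclic element}.

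For adjacency, Lemma~\ref{lem: not adj} shows that the edges of $\mc{T}(\mc{C})$ join $V_1$ to $V_2\sqcup V_3$. Let $v\in V_1$ be adjacent to $w$, with $E$ the class of $v$ and $C$ the cut set of $w$. Lemma~\ref{lem: adj} gives $\overline E\cap C\neq\emptyset$. Since $q$ is closed, $q(C)\in\overline{q(E)}$. By Lemma~\ref{lemma:image closure of sim class}, $\overline{q(E)}=\tau(E)$. Hence the cut point $q(C)$ lies in the cyclic element $\varphi(v)$, which is exactly adjacency in the cut point tree $T$ of Theorem~\ref{thm: peripheral splittings}.

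I expect the main obstacle to be the surjectivity of $V_1\to K$. I am treating it as settled by Lemma~\ref{lem: bij sim to cyclic element}, but I would double-check that every $\sim$--class really has a principal vertex. It should: the class is nonempty, so a triple with two points in it has a principal ultrafilter, and any such vertex is of type~\eqref{type:principal} by Lemma~\ref{lem:uniquesemiprincipal}.
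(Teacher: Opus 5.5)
Your proposal is correct and follows the paper's proof essentially step for step. Like the paper, you define $\varphi$ type by type through Lemmas~\ref{lem: bij sim to cyclic element} and~\ref{lem:cut sets to cut points}, get surjectivity onto $\Pi$ from Lemma~\ref{lem:noncut point}, and get adjacency from Lemma~\ref{lem: adj}. In two places you are more careful than the paper: you invoke Lemma~\ref{lem: not adj} to see that every edge joins $V_1$ to $V_2\sqcup V_3$, and Lemma~\ref{lemma:image closure of sim class} to place $q(C)$ inside $\varphi(v)$.

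Two small repairs are needed:
\begin{enumerate}
\item Injectivity of $v\mapsto E$ on $V_1$ comes from the ``hence for any'' clause of Definition~\ref{def:principal}; Lemma~\ref{lem:uniqueprincipal} only gives well-definedness.
\item Realizing a class by a principal vertex needs the class to contain two points, not merely to be nonempty. This holds because a nontrivial cyclic element of $\widehat M$ is uncountable, while $\widehat M$ has only countably many cut points (the $q(C)$, $C\in\mc{C}$). So it contains two non-cut points, and their preimages lie in one class.
\end{enumerate}
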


\begin{proof}
Let $M$, $\widehat{M}$, and $q$ be as in Lemma~\ref{lem:noncut point}.

By Lemma~\ref{lem: three types} there are three types of vertices in $V$. Let $v$ be a type (1) vertex. Then $v$ is principal for an off-the-wall point $x$. Lemma~\ref{lem: bij sim to cyclic element} implies that there is a bijection between the set of $\sim$--classes in $M$ and non-trivial cyclic elements of $\widehat{M}$. The collection of vertices $K\subset V(T)$ is the set of non-trivial cyclic elements by Theorem~\ref{thm: peripheral splittings}. Thus we have a one-to-one correspondence $\rho\colon V_1\rightarrow K$.

By Lemma~\ref{lem:cut sets to cut points} cut sets $C_D$ in $M$ are mapped to cut points in $\widehat{M}$. Moreover, the map $q$ identifies two-points if and only if they are contained in the same cut set. The elements of $\mc{C}$ are pairwise mutually non-separating, they are disjoint. Additionally, since $\mc{C}$ is a null family, the map  $q$ is upper semi-continuous by \cite{Daverman} I.2.3. It follows that there is an injective map from $V_3$, the collection of vertices of type (3), into the set $\Pi$. Lemma~\ref{lem:uniquesemiprincipal} associated a single cut set to a semi-principal vertex. So we also have that there is an injective map from $V_2$, the set vertices of type (2), into the set  $\Pi$.

Now, let $\hat{p}$ be a cut point of $M$. Then by definition of the map $q\colon M\rightarrow \hat{M}$ $q^{-1}(\hat{p})$ is contained in an element of $\mc{C}$. Thus, there is a bijection $\sigma$ between the collection of vertices of type (2) or (3) and the cut points of $\widehat{M}$.

Lastly, let $\varphi=\rho\sqcup\sigma\colon V \rightarrow \Pi\sqcup K$. 
We must show that if $v,w\in V$ are connected by an edge then so are $\varphi(v),\varphi(w)$. 

Suppose $v$ is a type (1) vertex connected by an edge $e$ to a type (2) or (3) vertex $w$.  Let $E$ be the $\sim$--class for which $v$ is principal, and let $C$ be the cut set associated to $w$.  Lemma~\ref{lem: adj} implies that $\overline{E}\cap C\neq \emptyset$.  This implies that $q(C)\cap q(\overline{E})\neq \emptyset$, which implies that $\varphi(v)$ and $\varphi(w)$ are connected by an edge by Theorem~\ref{thm: peripheral splittings}.

\end{proof}

\Bowditchtree*

\begin{proof}
    Proposition~\ref{prop:classifyv} gives an equivariant morphism of graphs from $\mc{T}(\mc{C})$ to the cut point tree, which is a bijection on the vertices.  Since both source and target are trees, this is necessarily an isomorphism.
\end{proof}

We now relate (a special case of) $\mc{T}(\mc{C})$ to the JSJ tree of cylinders, introduced by Guirardel and Levitt in~\cite{GL11}.  We will not define JSJ decompositions here, but instead refer the reader to \cite{GL17}. However we mention that the JSJ tree of cylinders over two-ended subgroups is the canonical tree which encodes all ``compatible'' splittings of $G$ over two-ended subgroups relative to $\mc{P}$.

To obtain the tree of cylinders we set $\mc{C}$ equal to a certain family of cut pairs.
\begin{definition}[inseparable and exact cut pair]
    A cut pair $\{a,b\}$ of $M$ is \emph{exact} if the number of ends of $M\setminus\{a\}$, the number of ends of $M\setminus\{b\}$, and the number of components of $M\setminus\{a,b\}$ are all equal. A cut pair $\{a,b\}$ is \emph{inseparable} if there does not exist another cut pair $\{c,d\}$ of $M$ such that $a$ and $b$ lie in different components of $M\setminus\{c,d\}$. 
\end{definition}

\begin{corollary}\label{cor:homeo to jsj tree}
    Let $(G,\mc{P})$ be a relatively hyperbolic group, and assume that $M=\bndry(G,\mc{P})$ has no cut points. 
    If $\mc{C}$ is the collection of inseparable exact cut pairs and $M$, then the cube complex dual to $\mc{D}(\mc{C})$ is equivariantly $2$--bi-Lipschitz homeomorphic to the canonical JSJ tree of cylinders over two-ended subgroups relative to $\mc{P}$.
\end{corollary}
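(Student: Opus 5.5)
The plan is to reduce the corollary to Theorem~\ref{thm: JSJ Tree}, combined with the known description of the JSJ tree of cylinders over two-ended subgroups as a cut point tree. First we check that $\mc{C}$, the family of inseparable exact cut pairs, satisfies Assumption~\ref{assump:RH}. Exactness bounds the number of components of $M\minus\{a,b\}$ by the number of ends of $M\minus\{a\}$. Local connectedness of the Bowditch boundary and the relatively hyperbolic structure make this number finite, so each cut pair has finite valence. Inseparability gives mutual non-separation: if $\{c,d\}$ met two components of $M\minus\{a,b\}$, then $\{a,b\}$ would separate $c$ from $d$, contradicting inseparability of $\{c,d\}$.

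Next we check the remaining hypotheses. $G$--finiteness comes from the standard fact (Bowditch, Haulmark--Hruska) that inseparable exact cut pairs are limit sets of loxodromic cyclic subgroups, and that there are finitely many orbits of them. Point-convergence then follows from Proposition~\ref{prop:qc iff null}: each pair is the limit set of a quasiconvex two-ended group, so the orbits form a null family. Pairs consist of conical points, so there are no isolated parabolic points. Hence Theorem~\ref{thm: JSJ Tree} applies, and $\mc{T}(\mc{C})$ is equivariantly isomorphic to the cut point tree of $\partial(G,\widehat{\mc{P}})$.

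For the comparison with the tree of cylinders, recall that Guirardel--Levitt's tree of cylinders is bipartite. One class of vertices is the cylinders, which here are the commensurability classes of two-ended edge groups; these correspond bijectively to inseparable cut pairs. The other class is the vertices of the JSJ tree belonging to at least two cylinders, which correspond to $\sim$--classes, i.e.\ the type~\eqref{type:principal} vertices. Under this identification, the vertices of types~\eqref{type:semi-princ} and~\eqref{type:midpt} correspond to cylinder vertices, and the type~\eqref{type:principal} vertices correspond to the remaining vertices. Adjacency agrees by Lemma~\ref{lem: not adj} and Lemma~\ref{lem: adj}.

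Finally we compare $\mc{T}(\mc{C})$ with $X(\mc{D}(\mc{C}))$ itself. The two differ only by subdividing edges coming from valence two cut pairs, and by Definition~\ref{def:T(C)} there is exactly one such edge per pair. The identity map from $X(\mc{D}(\mc{C}))$ to $\mc{T}(\mc{C})$, with $\mc{T}(\mc{C})$ carrying the combinatorial metric, rescales each edge length by a factor $1$ or $2$. It is therefore an equivariant $2$--bi-Lipschitz homeomorphism.

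The main obstacle is the identification of cylinders with cut pairs, including the valence-two case and its midpoint vertices. This rests on the Haulmark--Hruska / Guirardel--Levitt description relating inseparable exact cut pairs to cylinders. I would cite that description rather than reprove it, checking only that the valence of a cut pair equals the valence of its cylinder vertex, as in Lemma~\ref{lem: not adj}\eqref{itm:correct valence}.
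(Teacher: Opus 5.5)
Your proposal follows the paper's proof: verify that the inseparable exact cut pairs satisfy Assumption~\ref{assump:RH}, apply Theorem~\ref{thm: JSJ Tree}, identify the cut point tree of $\partial(G,\widehat{\mc{P}})$ with the JSJ tree of cylinders by \cite[Theorem 1.1]{HH23}, and note that undoing the valence-two subdivisions is an equivariant $2$--bi-Lipschitz map. (The paper leaves most of the hypothesis check implicit, recording only $G$--finiteness and non-parabolicity of the stabilizers via \cite[Proposition 4.7]{HH23}.) Two small remarks: once you cite that theorem, your direct matching of $\sim$--classes with the non-cylinder vertices is unnecessary, and Lemma~\ref{lem: adj} by itself says nothing about adjacency in the tree of cylinders; and your mutual non-separation argument tacitly uses the standard fact that distinct inseparable exact cut pairs are disjoint, which you should state.
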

\begin{proof}[Proof of Corollary] By Proposition 4.7 of \cite{HH23}, there are finitely many orbits of inseparable exact  cut pairs.  Note that the points in such a cut pair cannot be parabolic fixed points; in fact the stabilizer of an inseparable exact cut pair is non-parabolic two-end \cite[Proposition 4.7]{HH23}. 
By \cite{Osin06a} we may add the stabilizers of these cut pairs to the peripheral structure of $(G,\mc{P})$ to obtain a new peripheral structure $(G,\widehat{\mc{P}})$. The Bowditch boundary $\bndry(G,\widehat{\mc{P}})$ is obtained from $\bndry(G,\mc{P})$ by collapsing the exact inseparable cut pairs to points \cite{Dah03,Osin06a,Yang14}. Note that inseparability implies that images of the cut pairs in the quotient $\bndry(G,\widehat{\mc{P}})$ are cut points (\cite{H19}, Lemma 5.2). Theorem 1.1 of \cite{HH23} implies that the cut point tree of $\bndry(G,\widehat{\mc{P}})$ is equal to the JSJ tree of cylinders over two-ended subgroups relative to $\mc{P}$. The result follows from the construction of $\mc{T(\mc{C})}$ and Theorem~\ref{thm: JSJ Tree}. 

\end{proof}

\appendix
\section{Topological lemmas}

\begin{lemma}\label{lem:pathcut}
  If $X$ is a locally path connected metrizable space, every path cut point in $X$ is a cut point.
\end{lemma}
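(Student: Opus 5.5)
We prove the contrapositive: if $p\in X$ is not a cut point, then $p$ is not a path cut point. Since $X$ is locally path connected, connected components of open subsets of $X$ coincide with their path components, and these are open. The argument uses only this fact and the metrizability of $X$.

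Suppose $p$ is not a cut point. If $p$ is an isolated point then $X=\{p\}$, because $X$ is path connected as in the definition of path cut point. In that case there are no points $a,b\ne p$ and nothing to prove. So assume $p$ is not isolated. Then $X\minus\{p\}$ is connected, and it is open in $X$. By local path connectedness it is therefore path connected. Hence any $a,b\in X\minus\{p\}$ are joined by a path lying in $X\minus\{p\}$, that is, a path avoiding $p$. So $p$ is not a path cut point.

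We should check that this uses the right notion of cut point. The paper's usage suggests that "$p$ is a cut point" means $X\minus\{p\}$ is disconnected. Path connectedness of $X\minus\{p\}$ is then exactly the negation of $p$ being a path cut point.

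The main subtlety is the step that connected open subsets of a locally path connected space are path connected. The standard argument goes as follows. Fix a point $a$ in the open connected set $U=X\minus\{p\}$, and let $P\subset U$ be the set of points joined to $a$ by a path inside $U$. Local path connectedness gives path connected open neighborhoods inside $U$, which shows that both $P$ and $U\minus P$ are open. Connectedness of $U$ then forces $P=U$.

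Metrizability is not really needed here. At most it guarantees that the relevant neighborhoods behave well, and we can remark that the lemma holds for all locally path connected spaces.
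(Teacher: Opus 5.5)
Your proof is correct and is essentially the paper's argument in contrapositive form: the paper shows directly that the set $U_a$ of points joined to $a$ by a path in $X\minus\{p\}$ is open and closed in $X\minus\{p\}$, and that is exactly the standard fact about connected open subsets of locally path connected spaces that you invoke. Your closing remark is overstated, though. You and the paper both use that $\{p\}$ is closed: it is what makes $X\minus\{p\}$ open, and your isolated-point case needs it too. Without it the lemma fails. For example, adjoin to the topologist's sine curve $S$ a point $p$ whose open sets are $\emptyset$ and $U\cup\{p\}$ for $U$ open in $S$. The resulting space is path connected and locally path connected, and $X\minus\{p\}=S$ is connected but not path connected, so $p$ is a path cut point but not a cut point.
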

\begin{proof}
  Let $p\in X$, and let $a\ne p$.  Let $U_a$ be the set of points of $X$ which can be connected to $a$ by a path in $X\minus \{p\}$.  We claim that $U_a$ is both closed and open in $X \minus \{p\}$.

  Recall what local path connectedness means:  for any point $x\in X$ and any open neighborhood $U$ of $x$, there is a neighborhood $V$ of $x$ so that every $v\in V$ is connected to $x$ by a path in $U$.

  If $x\in U_a$, we can take $U = X \minus \{p\}$; the neighborhood $V$ as above is then also contained in $U_a$. Indeed for $y\in V$ we may concatenate the path from $a$ to $x$ with a path from $x$ to $y$ to verify $y\in U_a$.  This shows $U_a$ is open.

  To see $U_a$ is closed, suppose that $x_i\to x$ in $X\minus \{p\}$ and that $x_i\in U_a$ for all $i$.  Again, apply local path connectedness with $U = X \minus \{p\}$ to find a neighborhood $V$ of $x$.  Fix $i$ large enough so that $x_i\in V$.  This point is connected to $x$ by a path missing $p$.  Concatenating with the path which certifies $x_i\in U_a$ gives a path from $a$ to $x$.

  If $p$ is a path cut point, there is some pair of points $a$ and $b$ in $X\minus \{p\}$ so that every path from $a$ to $b$ goes through $p$.  But then $b\not\in U_a$, so $U_a$ is a nonempty proper closed and open subset of $X\minus \{p\}$.  In particular $p$ is a cut point.
\end{proof}

\begin{lemma}\label{lem: Baire}
    Suppose $M$ is a connected, nondegenerate, $T_1$, Baire space.  Let $U$ be a non-empty open subset of $M$ and let $\{K_i\}$ be a collection of closed subsets of $M$ with empty interior.  Then there are uncountably many points in $M\setminus\bigcup_i K_i$.
\end{lemma}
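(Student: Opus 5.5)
The plan is to show that $U\setminus\bigcup_i K_i$ is nonempty and then upgrade this to uncountability. I read the collection $\{K_i\}$ as countable, since that is how the lemma is used (with the countably many cut sets $C_D$, $D\in\mc{D}$).

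\emph{Step 1: nonemptiness.} Suppose $U\subseteq\bigcup_i K_i$. Then $U=\bigcup_i (U\cap K_i)$ is a countable union of closed subsets of $U$. In a Baire space, a nonempty open set is not meager. Each $K_i$ is closed with empty interior, so it is nowhere dense, and hence $U\cap K_i$ is nowhere dense in $M$. This contradicts $U$ being non-meager. So $U\setminus\bigcup_i K_i\neq\emptyset$.

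\emph{Step 2: singletons are nowhere dense.} Since $M$ is $T_1$, every singleton $\{x\}$ is closed. It remains to see that $\{x\}$ has empty interior, that is, $x$ is not an isolated point. If $\{x\}$ were open, it would be clopen. Because $M$ is connected, this forces $M=\{x\}$, which contradicts nondegeneracy. So every singleton is closed with empty interior.

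\emph{Step 3: uncountability.} Suppose $U\setminus\bigcup_i K_i$ were countable, say $\{x_1,x_2,\dots\}$. Form the enlarged countable family $\{K_i\}\cup\{\{x_j\}\}$. By Step 2 every member is closed with empty interior. Applying Step 1 to this family produces a point of $U$ lying outside every $K_i$ and different from every $x_j$, which is a contradiction. (If the set were finite, the same argument applies.)

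Each step is routine once the hypotheses are matched to it. The only spot that needs care is Step 2, where connectedness together with nondegeneracy rules out isolated points; without this, singletons might fail to be nowhere dense and the count could collapse. The main obstacle is therefore just making sure the Baire property is applied to a nonempty open set, rather than to all of $M$.
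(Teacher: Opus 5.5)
Your proof is correct and follows essentially the same route as the paper. Baire gives a point of $U$ off the $K_i$; $T_1$ plus connectedness and nondegeneracy make singletons closed with empty interior; and adding countably many singletons to the family $\{K_i\}$ gives the contradiction. Your explicit reading of $\{K_i\}$ as countable is also what the paper's argument implicitly assumes.
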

\begin{proof}
    Since $M$ is connected, nondegenerate, and $T_1$, no singleton is open.  Since $M$ is Baire, the open set $U$ is not contained in $\bigcup_i K_i$.  If $U\minus\bigcup_i K_i$ were countable, then we could augment the collection $\{K_i\}$ with countably many singletons to cover $U$ with countably many closed sets with empty interior.  This contradiction proves the lemma.
\end{proof}

\bibliography{refs.bib} 
\bibliographystyle{shortalpha}

\end{document}